\newtheorem{theorem}{Theorem}[section]
\newtheorem{lemma}[theorem]{Lemma}
\newtheorem{proposition}[theorem]{Proposition}
\newtheorem{corollary}[theorem]{Corollary}
\theoremstyle{definition}
\newtheorem{definition}[theorem]{Definition}
\newtheorem{remark}[theorem]{Remark}
\newtheorem{notation}{Notation}
\numberwithin{equation}{section}
\global\long\def\C{\mathbb{C}}
\global\long\def\N{\mathbb{N}}
\global\long\def\R{\mathbb{R}}
\global\long\def\mf#1{\mathfrak{#1}}
\global\long\def\mc#1{\mathcal{#1}}
\global\long\def\im{\mathrm{im}}
\global\long\def\id{\mathrm{id}}
\global\long\def\.{,\dots ,}
\global\long\def\Aut{\operatorname{Aut}}
\global\long\def\Lie{\operatorname{Lie}}
\global\long\def\so{\mathfrak{so}}
\global\long\def\ad{\mathrm{ad}}
\global\long\def\so{\mathfrak{so}}
\global\long\def\a{\mathfrak{a}}
\global\long\def\b{\mathfrak{b}}
\global\long\def\f{\mathfrak{f}}
\global\long\def\g{\mathfrak{g}}
\global\long\def\h{\mathfrak{h}}
\global\long\def\l{\mathfrak{l}}
\global\long\def\m{\mathfrak{m}}
\global\long\def\n{\mathfrak{n}}
\global\long\def\p{\mathfrak{p}}
\global\long\def\s{\mathfrak{s}}
\global\long\def\z{\mathfrak{z}}
\global\long\def\Der{\operatorname{Der}}
\author{Reinier Storm}
\address{KU Leuven, Department of Mathematics, Celestijnenlaan 200B -- Box 2400, BE-3001 Leuven, Belgium} 
\email{reinier.storm@kuleuven.be}
\thanks{The author is supported by project 3E160361 of the KU Leuven Research Fund.}
\keywords{naturally reductive, homogeneous space, parallel skew torsion, non-integrable $G$-structures}
\subjclass{Primary 53C30, Secondary 53C10} 
\begin{document}
	
\title{Structure theory of naturally reductive spaces}
\begin{abstract}
	The main result of this paper is that every naturally reductive space can be explicitly constructed from the construction in \cite{Storm2018}. This gives us a general formula for any naturally reductive space and from this we prove reducibility and isomorphism criteria.
\end{abstract}
\maketitle
	
\section{Introduction}
Naturally reductive spaces are amongst the simplest of Riemannian homogeneous spaces. The ones which are Riemannian symmetric are of course the most well known. All isotropy irreducible spaces can also be considered to be naturally reductive.
However, the class of naturally reductive spaces is much broader and contains many other interesting cases.
The holonomy bundle of a naturally reductive connection automatically equips the space with a (non-integrable) $G$-structure, where the naturally reductive connection is also a \emph{characteristic connection} for the $G$-structure. There are many interesting non-integrable $G$-structures one can obtain in this way such as 
homogeneous nearly Kähler manifolds (cf. \cite{Butruille2005, Butruille2010}), homogeneous nearly parallel $G_2$-manifolds (cf. \cite{FriedKathMorSem1997}), 
cocalibrated $G_2$-manifolds (cf. \cite{Friedrich2007}), 
Sasakian $\varphi$-symmetric manifolds (see \cite{Takahashi1977,BlairVanhecke1987_2,BlairVanhecke1987_1}).
Similarly for $Sp(n)Sp(1)$-structures there are the homogeneous $3$-Sasakian manifolds (cf. \cite{BoyerGalickiMann1994}). In \cite{AgricolaFriedrich2010} a connection with parallel skew torsion is constructed for any $7$-dimensional $3$-Sasakian structure. 
Also for $Sp(n)Sp(1)$-structures there are interesting naturally reductive examples. 
One of these is the quaternionic Heisenberg group, which is discussed in \cite{AgricolaFerreiraStorm15}. The naturally reductive connection is here also used to find new examples of generalized Killing spinors. More examples of this phenomena are presented in \cite{AgricolaFerreiraFriedrich2015}.
Naturally reductive spaces have also been used to find new homogeneous Einstein metrics. The simplest examples are the isotropy irreducible spaces, which are necessarily Einstein. D'Atri and Ziller found many other examples of Einstein metrics on naturally reductive compact Lie groups in \cite{D'AtriZiller1979}. Wang and Ziller classified all normal homogeneous Einstein manifolds $G/H$ with $G$ simple in \cite{WangZiller1985}. These metrics are also naturally reductive.
Over the past years there has been an increasing interest in connections with parallel skew torsion because they arise in several fields in theoretical and mathematical physics (e.g. \cite{FriedrichIvanov2002}  and references therein).
The most well known examples of this are naturally reductive connections, which have in particular parallel skew torsion.
The simple geometric and algebraic properties of naturally reductive spaces allow one to classify them in small dimensions. This has been done in \cite{TricerriVanhecke1983,KowalskiVanhecke1983,KowalskiVanhecke1985} in dimension $3,~4,~5$ and more recently in dimension $6$ in \cite{AgricolaFerreiraFriedrich2015}. 

\subsection{Results}
The most important result in this paper is \Cref{thm:canonical base}. This states that any naturally reductive space is in a unique way a $(\mf k,B)$-extension, defined in \cite{Storm2018}, of a space with its transvection algebra of the form
\begin{equation*}
\g=\h\oplus \m_0\oplus_{L.a.} \R^n,
\end{equation*}
where $\h\oplus \m_0$ is semisimple, $\h$ is the isotropy algebra and $\oplus_{L.a.}$ denotes the direct sum of Lie algebras. This implies that the discussion in \cite[Sec.~2.3]{Storm2018} gives an explicit description of all naturally reductive spaces. Remember that $(\mf k,B)$-extensions are particular fiber bundles of naturally reductive spaces. 
More specifically, the fibers are orbits of an abelian group of isometries. This means the fiber distribution is spanned by Killing vectors of constant length, see \cite{Nikonorov2013}. 
Recently in \cite{CleytonMoroianuSemmelmann2018} the authors also investigate in general connections with parallel skew torsion from a fiber bundle perspective. Their approach however does not cover these fiber bundles.
The realization of a naturally reductive space as a $(\mf k,B)$-extension also allows us to prove whether or not it is isomorphic to another naturally reductive space. This is done in \Cref{prop:iso type II}. We also provide easy to check criteria for a naturally reductive space to be irreducible. This is done in the combined results of \Cref{thm:product iff torsion prod}, \Cref{lem:reducible iff ideals} and \Cref{lem:reducibility criteria}. Surprisingly these last two problems were not touched upon in the literature up to now.
It is also nice to note that this approach immediately gives the holonomy algebra of the naturally reductive connection, see \Cref{lem:holonomy of space with g simple} and \eqref{eq:hol alg}. This means we always know what the $G$-structure is which is induced from the holonomy bundle of the naturally reductive connection.

In a forthcoming paper this theory will be used to give a systematic way to classify naturally reductive spaces and explicitly carry this out up to dimension $8$.

\section{preliminaries}
The essential structure of a locally homogeneous space is encoded in the infinitesimal model. We now briefly discuss this below.

\begin{theorem}[Ambrose-Singer, \cite{AmbroseSinger1958}]
A complete simply connected Riemannian manifold $(M,g)$ is a homogeneous Riemannian manifold if and only if there exists a metric connection $\nabla$ with torsion $T$ and curvature $R$ such that 
\begin{equation}
\nabla T = 0 \quad \mbox{and}\quad \nabla R = 0. \label{eq:AS connection}
\end{equation}
\end{theorem}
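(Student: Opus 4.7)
The plan is to treat the two implications separately. For the forward direction (homogeneous implies existence of such a connection), I would start from a transitive Lie group $G$ of isometries, fix a base point $p \in M$ with isotropy subgroup $H$, and use the fact that $H$ acts faithfully on $T_pM$ by orthogonal transformations (so $H$ is compact) to produce an $\Ad(H)$-invariant decomposition $\g = \h \oplus \m$ by averaging. The canonical connection of this reductive pair is a metric connection whose torsion and curvature at $p$ are, up to sign, $-[\cdot,\cdot]_{\m}$ and $-[[\cdot,\cdot]_{\h},\cdot]$ restricted to $\m \times \m$; both are $H$-invariant tensors on $\m$, hence their $G$-invariant extensions to $M$ are automatically parallel with respect to the canonical connection.

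For the converse, which is the substantive direction, the plan is to reconstruct a transitive Lie algebra of infinitesimal isometries from the data $(\nabla, T, R)$. Fix $p \in M$ and let $\h \subset \so(T_pM)$ be the holonomy algebra of $\nabla$ at $p$, equivalently the Lie algebra generated by the operators $R_p(x,y)$. Because $T$ and $R$ are $\nabla$-parallel, their values at $p$ are genuine algebraic tensors invariant under $\h$. Equip the vector space $\g := \h \oplus T_pM$ with the bracket
\begin{align*}
[A,B] &= AB - BA, \\
[A,x] &= A(x), \\
[x,y] &= R_p(x,y) - T_p(x,y),
\end{align*}
for $A,B \in \h$ and $x,y \in T_pM$. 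Verification of the Jacobi identity is the algebraic heart of the argument: Jacobi on three elements of $\h$ is automatic; Jacobi with one element of $\h$ and two of $T_pM$ follows from $\h$-invariance of $T_p$ and $R_p$, itself a consequence of $\nabla T = 0$ and $\nabla R = 0$; and Jacobi on three elements of $T_pM$ splits into an $\h$-component and a $T_pM$-component, which turn out to be precisely the second and first Bianchi identities for $\nabla$ after imposing $\nabla T = \nabla R = 0$.

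Having $\g$ as a Lie algebra, I would integrate it to a connected, simply connected Lie group $\tilde G$ with Lie subgroup $\tilde H$ corresponding to $\h$, and construct an evaluation map $\Phi : \tilde G \to M$ with $\Phi(e) = p$ by developing one-parameter subgroups of the $T_pM$-factor along $\nabla$-horizontal curves in $M$, and letting $\tilde H$ act through the holonomy realization on the orthonormal frame bundle. Metric completeness of $M$ lets this development extend globally, while simple connectedness ensures the resulting orbit map descends to a transitive, isometric action of $\tilde G/\tilde H$ on $M$. I expect the main obstacle to be the Jacobi verification (which is where both hypotheses $\nabla T = 0$ and $\nabla R = 0$ enter essentially via the Bianchi identities), together with the bookkeeping required to globalize the infinitesimal action using completeness and simple connectedness.
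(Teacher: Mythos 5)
The paper does not actually prove this statement: it quotes the Ambrose--Singer theorem from the original reference and only recalls its algebraic core in the preliminaries, namely the infinitesimal-model conditions \eqref{eq:par T and R}, \eqref{eq:B1}, \eqref{eq:B2} and the Nomizu bracket \eqref{eq:Nomizu Lie bracket}. Your sketch is the standard classical argument, and its algebraic heart coincides exactly with that recalled machinery: your Jacobi verification on $\h\oplus T_pM$ is the equivalence stated after \eqref{eq:Nomizu Lie bracket} between the Jacobi identity and the conditions \eqref{eq:par T and R}, \eqref{eq:B1}, \eqref{eq:B2} (your signs differ from the paper's convention, which is immaterial as long as $T$ and $R$ are taken with consistent conventions, cf.\ \eqref{eq:torsion from lie bracket} and \eqref{eq:curvature from lie bracket}). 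The forward direction via averaging over the compact isotropy group and using the canonical connection of the resulting reductive decomposition is also standard and fine (to guarantee compact isotropy, take the full isometry group or a closed transitive subgroup). The one place you should tighten is the globalization of the converse: what one constructs is a transitive action of the simply connected group $\tilde G$ on $M$ itself, e.g.\ by integrating the canonical horizontal vector fields and fundamental vertical fields on the holonomy subbundle of the orthonormal frame bundle, which form a finite-dimensional Lie algebra isomorphic to $\g$ precisely because $T$ and $R$ are $\nabla$-parallel; completeness of $g$ makes these fields complete, and transitivity follows from an open-and-closed argument for the orbit of $p$ in the connected manifold $M$. Your phrase about the orbit map descending to an action of $\tilde G/\tilde H$ is not quite right: $\tilde H$ need not be closed in $\tilde G$ a priori (this is the regularity issue the paper addresses by citing Tricerri), and $\tilde G/\tilde H$ is in any case not a group. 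The correct conclusion is that the stabilizer of $p$ under the constructed $\tilde G$-action on $M$ is automatically a closed subgroup containing the connected subgroup with Lie algebra $\h$, so $M\cong \tilde G/\mathrm{Stab}(p)$ is a homogeneous Riemannian manifold, as required.
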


\begin{remark}
A Riemannian manifold is locally homogeneous if its pseudogroup of local isometries acts transitively on it. It should be noted that there exist locally homogeneous Riemannian manifolds which are not locally isometric to a globally homogeneous space, see \cite{Kowalski1990}. Of course such manifolds have to be non-complete.
\end{remark}

A metric connection satisfying \eqref{eq:AS connection} is called an \emph{Ambrose-Singer connection}. The torsion $T$ and curvature $R$ of an Ambrose-Singer connection evaluated at a point $p\in M$ are linear maps
\begin{equation}
T_p: \Lambda^2 T_p M \to T_p M,\quad  R_p: \Lambda^2 T_p M \to \so(T_p M),\label{eq:(T,R)}
\end{equation}
which satisfy 
\begin{align}
&R_p(x,y)\cdot T_p = R_p(x,y)\cdot R_p=0 \label{eq:par T and R} \\
&\mf{S}^{x,y,z} R_p(x,y)z - T_p(T_p(x,y),z) = 0  \label{eq:B1}\\ 
&\mf{S}^{x,y,z} R_p(T_p(x,y),z) =0  \label{eq:B2},
\end{align}
where $\mf{S}^{x,y,z}$ denotes the cyclic sum over $x,y$ and $z$ and $\cdot$ denotes the natural action of $\so(T_p M)$ on tensors. The first equation encodes that $T$ and $R$ are parallel objects for $\nabla$ and under this condition the first and second Bianchi identity become equations \eqref{eq:B1} and \eqref{eq:B2}, respectively. A pair of tensors $(T,R)$, as in \eqref{eq:(T,R)}, on a vector space $\m$ with a metric $g$ satisfying \eqref{eq:par T and R}, \eqref{eq:B1} and \eqref{eq:B2} is called an \emph{infinitesimal model} on $(\m,g)$. From the infinitesimal model $(T,R)$ of a homogeneous space one can construct a homogeneous space with infinitesimal model $(T,R)$. This construction is known as the \emph{Nomizu construction}, see \cite{Nomizu1954}. This construction goes as follows.
Let 
\[
\h := \{h\in \so(\m) : h\cdot T=0,~ h\cdot R=0\}.
\]
and set
\begin{equation}\label{eq:Nomizu Lie algebra}
\g := \h \oplus \m.
\end{equation}
On $\g$ the following Lie bracket is defined for all $h,k\in \h$ and $x,y\in \m$:
\begin{equation}
[h + x,k + y] := [h,k]_{\so(\m)} - R(x, y) + h(y) - k(x) - T(x,y),\label{eq:Nomizu Lie bracket}
\end{equation}
where $[-,-]_{\so(\m)}$ denotes the Lie bracket in $\so(\m)$. The bracket from \eqref{eq:Nomizu Lie bracket} satisfies the Jacobi identity if and only if $R$ and $T$ satisfy the equations \eqref{eq:par T and R}, \eqref{eq:B1} and \eqref{eq:B2}. We will call $\g$ the \emph{symmetry algebra} of the infinitesimal model $(T,R)$. Let $G$ be the simply connected Lie group with Lie algebra $\g$ and let $H$ be the connected subgroup with Lie algebra $\h$. The infinitesimal model is called \emph{regular} if $H$ is a closed subgroup of $G$. If this is the case, then clearly the canonical connection on $G/H$ has the infinitesimal model $(T,R)$ we started with.
In \cite[Thm.~5.2]{Tricerri1992} it is proved that every infinitesimal model coming from a globally homogeneous Riemannian manifold is regular. 

\subsection{Naturally reductive fiber bundles}
The important results in this paper revolve around the idea of fiber bundles of naturally reductive spaces. We now discuss the basics of this.

\begin{definition}\label{def:nat red decomp}
Let $(\g = \h \oplus \m,g)$ be a Lie algebra together with a subalgebra $\h\subset \g$, a complement $\m$ of $\h$ and a metric $g$ on $\m$. Suppose $\ad(\h)\m\subset \m$ and for all $x,y,z\in \m$ that
\[
g([x,y]_\m,z) = -g(y,[x,z]_\m).
\]
Then we call $(\g=\h\oplus \m,g)$ a \emph{naturally reductive decomposition} with $\h$ the \emph{isotropy algebra}. We will mostly refer to just $\g=\h\oplus \m$ as a naturally reductive decomposition and let the metric be implicit. The infinitesimal model of the naturally reductive decomposition is defined by 
\begin{align}
 T(x,y)&:= -[x,y]_\m,                &\quad \forall x,y\in \m,\label{eq:torsion from lie bracket} \\
  R(x,y)&:=-\ad([x,y]_\h) \in \so(\m), &\quad \forall x,y\in \m,\label{eq:curvature from lie bracket} 
\end{align}
where $[x,y]_\m$ and $[x,y]_\h$ are the $\m$- and $\h$-component of $[x,y]$, respectively. We call the decomposition an \emph{effective} naturally reductive decomposition if the restricted adjoint map $\ad:\h\to \so(\m)$ is injective. We will say that $\g$ is the \emph{transvection algebra} of the naturally reductive decomposition $\g=\h\oplus \m$ if the decomposition is effective and $\im(R) = \ad(\h) \subset \so(\m)$. Note that \eqref{eq:par T and R} implies that $\im(R)\subset \so(\m)$ is a subalgebra and that the transvection algebra is always a Lie subalgebra of the symmetry algebra.
\end{definition}
The proof of the following lemma is straight forward and can be found in \cite{TricerriVanhecke1983}.
\begin{lemma}\label{lem:inf models isomorphic}
Let $(T,R)$ and $(T',R')$ be two infinitesimal models on $(\m,g)$ and $(\m',g')$, respectively. Let $M:\m\to \m'$ be a linear isometry. The following are equivalent
\begin{enumerate}[i)] 
 \item $M\cdot T = T' \quad \mbox{and}\quad M\cdot R = R'$,
 \item the induced map $\hat{M}:\im(R)\oplus \m \to \im(R') \oplus \m'$ is a Lie algebra isomorphism of the transvection algebras.
\end{enumerate}
\end{lemma}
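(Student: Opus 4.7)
The plan is to unpack the Nomizu bracket \eqref{eq:Nomizu Lie bracket} restricted to the transvection algebras and to see that all content of both statements reduces to the bracket of two elements of $\m$. The induced map is $\hat M|_\m = M$ and, on $\im(R) \subset \so(\m)$, conjugation $A \mapsto M A M^{-1}$; the latter maps into $\so(\m')$ because $M$ is a linear isometry, so $\hat M$ is at least a well-defined linear map $\so(\m)\oplus\m \to \so(\m')\oplus\m'$.

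I would first dispense with the bracket types that involve $\im(R)$. Conjugation by $M$ is always a Lie algebra isomorphism $\so(\m)\to\so(\m')$, so for $h,k\in\im(R)$ one has $\hat M([h,k]_{\so(\m)}) = [\hat M h,\hat M k]_{\so(\m')}$ for free. For $h\in\im(R)$, $x\in\m$ the identity $(MhM^{-1})(Mx) = M h(x)$ gives $\hat M([h,x]) = [\hat M h,\hat M x]'$ for free as well. So on these two types of brackets neither hypothesis nor conclusion is actually felt.

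The only nontrivial case is $x,y\in\m$. By \eqref{eq:Nomizu Lie bracket} we have $[x,y] = -R(x,y) - T(x,y)$, with $-R(x,y)$ the $\im(R)$-component and $-T(x,y)$ the $\m$-component, and likewise $[Mx,My]' = -R'(Mx,My) - T'(Mx,My)$. Applying $\hat M$ to the first and comparing $\im(R')\oplus\m'$-components shows that $\hat M$ preserves this bracket if and only if
\[
T'(Mx,My) = M\,T(x,y) \quad\text{and}\quad R'(Mx,My) = M\,R(x,y)\,M^{-1},
\]
which is exactly condition (i).

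The remaining bookkeeping is bijectivity of $\hat M$: under (i) the curvature identity above forces $\im(R') = M\,\im(R)\,M^{-1}$, so $\hat M$ is a linear isomorphism; under (ii) bijectivity is part of the hypothesis. There is no genuine obstacle here — once the Lie bracket is split into its three pieces, only the $\m\times\m$ case carries content, and the equivalence drops out of a componentwise comparison.
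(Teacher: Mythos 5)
Your argument is correct and is exactly the straightforward componentwise verification the paper has in mind when it omits the proof and refers to Tricerri--Vanhecke: the only bracket with content is the one on $\m\times\m$, where splitting \eqref{eq:Nomizu Lie bracket} into its $\so(\m')$- and $\m'$-parts gives precisely $M\cdot T=T'$ and $M\cdot R=R'$, and your observation that (i) forces $\im(R')=M\,\im(R)\,M^{-1}$ settles well-definedness and bijectivity of $\hat M$.
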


It is important to recognize fiber bundles on the Lie algebra level. The following lemma and definition deal with this and will be used in the sequel.

\begin{lemma}
\label{lem:lie algebra fiber bundle basic lemma}
Let $(\g= \h\oplus \m,g)$ be an effective naturally reductive decomposition. Furthermore, suppose $\m=\m^+\oplus \m^-$ is an orthogonal decomposition of $\h$-modules. Then the following hold:
\begin{enumerate}[i)]
\item $[\m^+ , \m^-]\subset \m$,
\item $[\m^+,\m^-]\subset \m^-$ if and only if $[\m^+,\m^+]_{\m}\subset \m^+$.
\end{enumerate}
If we assume that $[\m^+,\m^-]\subset \m^-$, then also the following hold:
\begin{enumerate}[i)]
\setcounter{enumi}{2}
\item $\b=\h\oplus \m^+$ is a subalgebra of $\g$,
\item $(\g=\b \oplus \m^-,g|_{\m^-\times \m^-})$ is a naturally reductive decomposition.
\end{enumerate}
\end{lemma}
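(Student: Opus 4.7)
The plan is to tackle (i) first -- the only substantive part -- and deduce (ii)--(iv) by short computations using the naturally reductive skew-symmetry. For (i), effectiveness reduces the claim $[\m^+,\m^-]\subset\m$ to showing $R(x,y)=0$ for $x\in\m^+$ and $y\in\m^-$. Since $R(x,y)\in\ad(\h)$ preserves each summand $\m^\pm$, the ``off-diagonal'' entries $g(R(x,y)z,w)$ with $z,w$ in different summands vanish for free, and only the same-summand case requires work. For this I would use the pair symmetry $g(R(x,y)z,w)=g(R(z,w)x,y)$, which in the naturally reductive setting follows (after passing to the transvection algebra if necessary) from the existence of a nondegenerate $\ad$-invariant symmetric bilinear form $Q$ on $\g$ extending $g|_\m$ with $\h\perp_Q\m$: substituting $R(x,y)=-\ad([x,y]_\h)$ and using $\ad$-invariance of $Q$ together with $Q(\h,\m)=0$,
\[
g(R(x,y)z,w)=-Q([[x,y]_\h,z],w)=-Q([x,y]_\h,[z,w])=-Q([x,y]_\h,[z,w]_\h),
\]
which is manifestly symmetric under $(x,y)\leftrightarrow(z,w)$. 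Pair symmetry then kills the same-summand entries: for $z,w\in\m^+$ the element $R(z,w)x\in\m^+$ is $g$-orthogonal to $y\in\m^-$, and analogously for $z,w\in\m^-$.

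For (ii), (i) yields $[\m^+,\m^-]\subset\m$, so the containment in $\m^-$ reduces to vanishing of the $\m^+$-component. The naturally reductive identity $g([x,y]_\m,z)+g(y,[x,z]_\m)=0$ applied with $x,z\in\m^+$ and $y\in\m^-$ isolates $[x,y]_{\m^+}$ on the left and $[x,z]_{\m^-}$ on the right via the orthogonality $\m^+\perp\m^-$, giving the equivalence $[\m^+,\m^-]_{\m^+}=0\iff[\m^+,\m^+]_{\m^-}=0$. For (iii), the only bracket to check on $\b=\h\oplus\m^+$ is $[\m^+,\m^+]$: its $\h$-part is in $\h$, and its $\m$-part lies in $\m^+$ by (ii) combined with the hypothesis $[\m^+,\m^-]\subset\m^-$. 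For (iv), $[\b,\m^-]\subset\m^-$ combines the $\h$-invariance of $\m^-$ with the same hypothesis, and the naturally reductive condition on $(\g=\b\oplus\m^-,g|_{\m^-})$ is the restriction of the original, since for $x,y\in\m^-$ the $\m^-$-projection in the new decomposition coincides with the $\m^-$-component of the $\m$-part in the old.

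The principal obstacle is the pair symmetry used in (i): for a generic metric connection with parallel skew torsion this is not automatic, so it is exactly the naturally reductive hypothesis -- concretely, Kostant's bi-invariant form on the transvection algebra -- that makes the argument go through. Once pair symmetry is available, the remaining parts (ii)--(iv) are essentially bookkeeping with the $\h$- and $\m$-components of brackets.
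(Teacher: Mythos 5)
Your proposal is correct and takes essentially the same route as the paper: part (i) hinges on the pair symmetry of the canonical curvature together with effectiveness (the paper simply cites the symmetry of $R:\Lambda^2\m\to\Lambda^2\m$ with respect to the Killing form, while you derive that symmetry explicitly from Kostant's invariant form restricted to the transvection algebra), and your treatment of (ii)--(iv) matches the paper's short computations with the naturally reductive identity. No gaps.
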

\begin{proof}
\emph{i)} Since $\m^+$ and $\m^-$ are $\h$-invariant we conclude
\[
g(R(u,v)x^+,x^-)=0,\quad \forall~ x^\pm\in \m^\pm,\ \forall u,v\in\m.
\]
Combining this with the fact that $R:\Lambda^2 \m \to \Lambda^2\m$ is symmetric with respect to the Killing form on $\so(\m)\cong \Lambda^2 \m$ it follows that $R(x^+,x^-)=0$ for all $x^\pm\in \m^\pm$. The tensor $R$ is defined by $R(x^+,x^-) = -\ad([x^+,x^-]_\h)$. Since we assume our decomposition to be effective $\ad([x^+,x^-]_\h)=0$ implies that $[x^+,x^-]_\h=0$. Hence $[\m^+ ,\m^-]\subset \m$.

\emph{ii)} Suppose that $[\m^+ ,\m^- ]\subset \m^-$. If $x_1^+,x_2^+\in \m^+$ and $x^-\in \m^-$, then
\[
0=g([x_1^+ ,x^-],x_2^+)=-g(x^-, [x_1^+,x_2^+]).
\]
This implies $[x_1^+,x_2^+]_{\m}\in \m^+$. The converse follows from the same equation and \emph{i)}. 

\emph{iii)} From \emph{ii)} we can easily conclude that $\b$ is a subalgebra of $\g$.

\emph{iv)} For the decomposition $\g=\b\oplus \m^-$ we clearly have $[\b,\m^-]\subset \m^-$ and the decomposition is naturally reductive with respect to the metric $g|_{\m^-\times\m^-}$.
\end{proof}

\begin{definition}\label{def:Lie algebra fiber bundle}
Let $\g=\h\oplus \m$ be a naturally reductive decomposition. Suppose that $[\m^+ ,\m^-]\subset \m^-$, with the notation from Lemma \ref{lem:lie algebra fiber bundle basic lemma}. In this case we will call $\g =\h\oplus \m$ the \emph{decomposition of the total space} of the \emph{infinitesimal fiber bundle} and the naturally reductive decomposition $\g=\b\oplus \m^-$ with isotropy algebra $\b$ the \emph{decomposition of the base space}. 
Furthermore, we will call $\m^+$ the \emph{fiber direction}.
\end{definition}
If connected subgroup $B\subset G$ with $\Lie(B)=\b$ is closed and $G/H$ is globally homogeneous with $H\subset G$ connected, then $G/H\to G/B$ is a homogeneous fiber bundle with $B/H$ as fibers.
In general the Lie group $B$ will not be closed. However, the decomposition $\g=\b\oplus \m^-$ always defines a naturally reductive decomposition and therefore a locally naturally reductive space.  This is the reason why we consider infinitesimal fiber bundles. 

The following is a basic result on tensors which we use in \Cref{lem:split torsion}.

\begin{lemma}
\label{lem:square operation of 2-forms}
Let $(V,g)$ be a finite dimensional vector space with a metric $g$. If $\alpha\in\Lambda^2 V \cong \so(V)$, $\beta\in\Lambda^q V$ and $e_1,\dots,e_n$ and orthonormal basis of $V$, then 
\[
\sum_{i=1}^n(e_i\lrcorner \alpha)\wedge (e_i \lrcorner \beta) = \pi^{\wedge q}(\alpha)\beta \equiv \alpha \cdot \beta,
\]
where $\pi$ is the vector representation of $\so(V)$ and $\pi^{\wedge q}$ is the induced tensor representation on $\Lambda^q V$. Furthermore if $\alpha,\beta\in\Lambda^2 V$, then $\alpha\cdot\beta=[\alpha,\beta]_{\so(V)}$.
\end{lemma}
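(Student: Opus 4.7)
The plan is to fix $\alpha \in \Lambda^2 V$ and show that both sides of the identity, viewed as linear operators $\Lambda^\bullet V \to \Lambda^\bullet V$ in the argument $\beta$, are derivations of degree $0$ on the exterior algebra, and then to check agreement on $\Lambda^1 V = V$. Since $V$ generates $\Lambda^\bullet V$ as an algebra, agreement on $V$ forces agreement on each $\Lambda^q V$.

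That the right-hand side $R_\alpha(\beta) := \pi^{\wedge q}(\alpha)\beta$ is a derivation is essentially tautological: $\pi^{\wedge q}$ is, by definition, the extension of the representation $\pi$ on $V$ to $V^{\otimes q}$ by the Leibniz rule and then to $\Lambda^q V$, so $R_\alpha$ differentiates the wedge product. The left-hand side $L_\alpha(\beta) := \sum_i (e_i \lrcorner \alpha)\wedge(e_i \lrcorner \beta)$ takes a short calculation. Writing $\beta = \beta_1 \wedge \beta_2$ with $\deg\beta_1 = p$ and using the antiderivation identity $e_i \lrcorner(\beta_1\wedge \beta_2) = (e_i\lrcorner\beta_1)\wedge \beta_2 + (-1)^p\,\beta_1\wedge(e_i\lrcorner \beta_2)$, one commutes $(e_i\lrcorner \alpha)$ (degree $1$) past $\beta_1$ at the cost of a sign $(-1)^p$. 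The two factors of $(-1)^p$ cancel and one obtains $L_\alpha(\beta_1\wedge \beta_2) = L_\alpha(\beta_1)\wedge \beta_2 + \beta_1 \wedge L_\alpha(\beta_2)$.

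Agreement on $V$ is immediate: for $v\in V$ one has $e_i \lrcorner v = g(e_i, v)$, hence $L_\alpha(v) = \sum_i g(e_i, v)(e_i \lrcorner \alpha) = v \lrcorner \alpha$, which is precisely $\pi(\alpha) v = R_\alpha(v)$ under the standard convention that identifies $\alpha \in \Lambda^2 V$ with the skew endomorphism $x \mapsto x\lrcorner \alpha$.

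For the second claim, when $\beta \in \Lambda^2 V$ one must compare $\alpha \cdot \beta = \pi^{\wedge 2}(\alpha)\beta$ with the adjoint bracket $[\alpha,\beta]_{\so(V)}$. This reduces to showing that the canonical linear isomorphism $\Lambda^2 V \to \so(V)$, $u\wedge w \mapsto B_{u,w}$ with $B_{u,w}(x) := x \lrcorner (u\wedge w)$, is equivariant for the natural $\so(V)$-actions on either side. Evaluating $[A, B_{u,w}]$ on a test vector and using the skew-adjointness of $A$ yields $[A, B_{u,w}]_{\so(V)} = B_{Au, w} + B_{u, Aw}$, which is exactly the Leibniz rule describing the action $\pi^{\wedge 2}(A)$ on $u\wedge w$. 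Hence $\alpha\cdot \beta = [\alpha,\beta]_{\so(V)}$. The only real subtlety throughout is keeping the signs in the antiderivation rule and the convention fixing $\Lambda^2 V \cong \so(V)$ consistent; once that is pinned down, the whole statement follows from the derivation/equivariance principle.
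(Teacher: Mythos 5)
Your proof is correct. Note that the paper itself offers no proof of this lemma --- it is introduced with the phrase ``the following is a basic result on tensors'' and then used directly in the proof of the splitting lemma for the torsion --- so there is no argument of the author's to compare against; your write-up simply supplies the standard verification. Both halves of your argument are sound: fixing $\alpha$, the operator $\beta\mapsto\sum_i(e_i\lrcorner\alpha)\wedge(e_i\lrcorner\beta)$ is a degree-zero derivation (your sign bookkeeping with the antiderivation rule is right, the two factors $(-1)^p$ cancel), $\pi^{\wedge q}(\alpha)$ is a derivation by construction, both annihilate $\Lambda^0 V$ automatically since any derivation kills $1$, and they agree on $V$ under the identification $\pi(\alpha)v=v\lrcorner\alpha$; agreement on the generating set then gives the identity in every degree. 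For the second claim, the computation $[A,B_{u,w}]=B_{Au,w}+B_{u,Aw}$, using skew-adjointness of $A$, is exactly the equivariance of the isomorphism $\Lambda^2V\cong\so(V)$ needed to identify $\pi^{\wedge 2}(\alpha)\beta$ with $[\alpha,\beta]_{\so(V)}$. The only caveat, which you already flag, is that the whole statement is convention-dependent (the identification $\alpha\mapsto(x\mapsto x\lrcorner\alpha)$ and the normalization of the metric on $\Lambda^2V$); your choices are internally consistent and match the way the lemma is applied later in the paper, e.g.\ in the proofs of the torsion-splitting lemma and of the description of $\s(\g)$.
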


Next we briefly discuss when a Riemannian manifold with a metric connection which has parallel skew torsion can locally be written as a product. It turns out this only depends on the metric and torsion. This result is essential to prove if a space with parallel skew torsion can not be decomposed as a product.

\begin{lemma}
\label{lem:split torsion}
Let $(V,g)$ be some vector space with a metric $g$. Let $T\in \Lambda^3 V$ be a $3$-form. Let $h\in \mf{so}(V)$ with $h\cdot T=0$. Suppose that either
\begin{enumerate}[i)]
\item $T$ has no kernel and $T=T_{1}+T_{2}\in\Lambda^{3}V_{1}\oplus\Lambda^{3}V_{2}$, with $V_1=(V_2)^\perp$ or,
\item $T$ has a kernel and we set $V_2=\ker(T)$ and $V_1=(V_2)^\perp$, so $T=T_{1}+T_{2}\in\Lambda^{3}V_{1}\oplus\Lambda^{3}V_{2}$ with $T_2=0$.
\end{enumerate}
Then for both cases $h$ leaves $V_1$ and $V_2$ invariant. In other words
\[
\{h\in \so(V):h\cdot T =0\} \cong \{ h_1\in \so(V_1):h_1\cdot T_1=0\}\oplus \{ h_2\in \so(V_2):h_2\cdot T_2=0\}.
\]
\end{lemma}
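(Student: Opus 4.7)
The plan is to split $h$ according to the orthogonal decomposition $V=V_{1}\oplus V_{2}$ and show that the cross component vanishes. Under the identification $\so(V)\cong\Lambda^{2}V$ one has $\Lambda^{2}V=\Lambda^{2}V_{1}\oplus(V_{1}\wedge V_{2})\oplus\Lambda^{2}V_{2}$, so write $h=h_{11}+h_{12}+h_{22}$. The diagonal blocks $h_{11},h_{22}$ preserve their respective summands (and act as zero on the other), while $h_{12}$ swaps the summands, mapping $V_{1}\to V_{2}$ and $V_{2}\to V_{1}$, and is determined by its restriction $h_{12}|_{V_{2}}$ by skew-symmetry. In parallel, $\Lambda^{3}V$ decomposes into four graded pieces $\Lambda^{3}V_{1}\oplus(\Lambda^{2}V_{1}\wedge V_{2})\oplus(V_{1}\wedge\Lambda^{2}V_{2})\oplus\Lambda^{3}V_{2}$, and $T=T_{1}+T_{2}$ lies in the outer two.

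The next step is to identify where each summand of $h\cdot T$ lands. Because $h_{ii}$ annihilates $V_{j}$ for $i\ne j$, we have $h_{11}\cdot T_{2}=0=h_{22}\cdot T_{1}$. A direct computation gives $(h_{12}\cdot T_{1})(x,y,z)=-T_{1}(h_{12}(x),y,z)$ for $x\in V_{2}$ and $y,z\in V_{1}$, with all other components vanishing (the terms where $h_{12}$ is applied to a $V_{1}$-argument land in $V_{2}$, where $T_{1}$ is zero). Thus $h_{12}\cdot T_{1}\in V_{2}\wedge\Lambda^{2}V_{1}$ and, symmetrically, $h_{12}\cdot T_{2}\in V_{1}\wedge\Lambda^{2}V_{2}$. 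The hypothesis $h\cdot T=0$ projected onto these distinct graded pieces forces $h_{12}\cdot T_{1}=0$, i.e.\ $h_{12}(x)\lrcorner T_{1}=0$ in $\Lambda^{2}V_{1}$ for every $x\in V_{2}$.

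It remains to check that $T_{1}$ has no kernel on $V_{1}$, which is immediate in both cases. In case (i), for $v_{1}\in V_{1}$ one has $v_{1}\lrcorner T_{2}=0$ (since $v_{1}\perp V_{2}$), hence $v_{1}\lrcorner T_{1}=v_{1}\lrcorner T$, and $T$ has no kernel by assumption. In case (ii), $T_{1}=T$ and $V_{1}=(\ker T)^{\perp}$ by construction, so again $T_{1}|_{V_{1}}$ has trivial kernel. Either way $h_{12}|_{V_{2}}=0$, hence $h_{12}=0$, and $h=h_{1}\oplus h_{2}$ with $h_{i}\in\so(V_{i})$. The graded decomposition established above then reads $h_{i}\cdot T_{i}=0$, which yields the asserted direct sum decomposition of $\{h\in\so(V):h\cdot T=0\}$.

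The only delicate point is the bookkeeping that assigns each summand $h_{ab}\cdot T_{c}$ to its graded component in $\Lambda^{3}V$, so that the equations $h_{12}\cdot T_{1}=0$ and $h_{12}\cdot T_{2}=0$ can be extracted cleanly; once this is done, the non-degeneracy of $T_{1}|_{V_{1}}$ completes the argument mechanically. One could alternatively apply \Cref{lem:square operation of 2-forms} in an orthonormal basis adapted to $V=V_{1}\oplus V_{2}$ to read off the same graded decomposition without writing out any component formulas.
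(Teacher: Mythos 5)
Your proof is correct and follows essentially the same route as the paper: block-decompose $h$, use the graded decomposition of $\Lambda^3 V$ to force each summand of $h\cdot T$ to vanish separately, and then use the non-degeneracy of $T_1$ on $V_1$ to kill the off-diagonal block. The only cosmetic difference is that you deduce $h_{12}=0$ pointwise from $h_{12}(x)\lrcorner T_1=0$ and the triviality of $\ker(T_1|_{V_1})$, whereas the paper invokes \Cref{lem:square operation of 2-forms} and the linear independence of the $2$-forms $e_i\lrcorner T_1$ in an orthonormal basis.
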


\begin{proof}
We view $h$ as a skew-symmetric endomorphism of $V$ and we write $h$ as
\[
h=\left(\begin{array}{c|c}
A & -B^{T}\\
\hline B & C
\end{array}\right),
\]
where $A\in\mf{so}(V_1),\ B\in\mbox{Lin}(V_1,V_2),\ C\in\mf{so}(V_2)$. Since the torsion is invariant under $h$ we get
\[
0=h\cdot T=A\cdot T_{1}+B\cdot T_{1}-B^{T}\cdot T_{2}+C\cdot T_{2}.
\]
If any two of these summands are non-zero, then they are linearly independent, since
\[
A\cdot T_{1}  \in  \Lambda^{3}V_{1},\qquad B\cdot T_{1}  \in  \Lambda^{2}V_{1}\otimes V_{2},\qquad -B^{T}T_{2}  \in  V_{1}\otimes\Lambda^{2}V_{2},\qquad C\cdot T_{2}  \in  \Lambda^{3}V_{2}.
\]
Hence all terms vanish. We get 
\[
0=B\cdot T_{1}=(B-B^T)\cdot T_1 = \sum_{i}B(e_{i})\wedge(e_{i}\lrcorner T_1),
\]
where the sum is over an orthonormal basis of $V_{1}$ and $(B-B^T)$ is considered as a block matrix in $\so(V)$. For the last equality we used \Cref{lem:square operation of 2-forms}. The 2-forms $e_{i}\lrcorner T_1$ are all linearly independent, because $T_1$ has no kernel for both case \emph{i)} and case \emph{ii)}. Since $B(e_{i})\in V_{2}$ and $e_i\lrcorner T_1\in \Lambda^2 V_1$ we obtain the equation $B(e_{i})\wedge(e_{i}\lrcorner T_1)=0$ for all $i$. This implies $B(e_{i})=0$ for all $e_{i}$. We conclude that $B=0$ and thus $h$ leaves $V_{1}$ and $V_{2}$ invariant.
\end{proof}

For this reason we make the following definition.

\begin{definition}
\label{def:reducible torsion}
Let $(V,g)$ be some vector space with a metric $g$. A 3-form $T\in \Lambda^3 V$ is called \emph{reducible} if it can be written as $T= T_1 + T_2$ with $T_i\in \Lambda^3 V_i$ for some non-zero $V_1\subset V$ and $V_2\subset V$ such that $V_1\perp V_2$. Otherwise $T$ is called \emph{irreducible}.
\end{definition}

Combining \Cref{lem:split torsion} with de Rham's theorem for Riemannian manifolds we obtain the following.

\begin{theorem}
\label{thm:product iff torsion prod}
Let $(M,g,\nabla)$ be a complete simply connected manifold with a metric connection $\nabla$ with non-zero parallel skew torsion $T$. Then the following are equivalent 
\begin{enumerate}[i)]
\item $M$ is isometric to a product and $\nabla$ is the product connection:
\[
(M,g,\nabla)\cong (M_1,g_1,\nabla_1)\times(M_2,g_2,\nabla_2),
\]
where $\nabla_1$ and $\nabla_2$ are connections on $M_1$ and $M_2$, respectively. Both $\nabla_1$ and $\nabla_2$ have parallel skew torsion.
\item The torsion at some point $x\in M$ is reducible, i.e. $T(x)=T_1(x)+T_2(x)\in\Lambda^{3}V_1(x)\oplus\Lambda^{3}V_2(x)$,
for certain orthogonal subspaces $V_1(x),V_2(x)\subset T_xM$ and $T_i(x)\in \Lambda^3 V_i (x)$. 
\end{enumerate}
\end{theorem}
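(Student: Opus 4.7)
The plan is to prove each implication separately. Direction \emph{(i) $\Rightarrow$ (ii)} is immediate: if $(M,g,\nabla)$ is the product $(M_1,g_1,\nabla_1)\times(M_2,g_2,\nabla_2)$, then at any point $x=(x_1,x_2)$ the tangent space splits orthogonally as $T_{x_1}M_1\oplus T_{x_2}M_2$, and the product torsion lies in $\Lambda^3 T_{x_1}M_1\oplus\Lambda^3 T_{x_2}M_2$, which is precisely reducibility.

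For \emph{(ii) $\Rightarrow$ (i)}, the goal is to upgrade the pointwise algebraic splitting $T(x)=T_1(x)+T_2(x)$ to a global Riemannian splitting of $M$ and then invoke de Rham's theorem. First I use the parallelism of $T$: since $\nabla T=0$, the holonomy algebra $\mathfrak{hol}_\nabla(x)\subset\so(T_xM)$ is contained in the annihilator of $T(x)$. Applying \Cref{lem:split torsion}, with case \emph{i)} if $T(x)$ has no kernel and case \emph{ii)} (taking $V_2(x)=\ker T(x)$) otherwise, every element of $\mathfrak{hol}_\nabla(x)$ preserves the orthogonal decomposition $T_xM=V_1(x)\oplus V_2(x)$. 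Consequently $\nabla$-parallel transport produces two complementary orthogonal $\nabla$-parallel distributions $\mathcal{D}_1,\mathcal{D}_2$ on $M$, and the splitting $T=T_1+T_2$ propagates globally.

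The next step is to verify that $\mathcal{D}_1$ and $\mathcal{D}_2$ are involutive and parallel with respect to the Levi-Civita connection $\nabla^g$. Involutivity of $\mathcal{D}_1$ follows from $[X,Y]=\nabla_X Y-\nabla_Y X-T(X,Y)$ together with $T(X,Y)=T_1(X,Y)\in\mathcal{D}_1$ for $X,Y\in\Gamma(\mathcal{D}_1)$. For the Levi-Civita statement I use the relation $\nabla^g_Z X=\nabla_Z X-\tfrac{1}{2}T(Z,X)$: for arbitrary $Z$ and $X\in\Gamma(\mathcal{D}_1)$, the total skew-symmetry of $T$ combined with $T=T_1+T_2\in\Lambda^3\mathcal{D}_1\oplus\Lambda^3\mathcal{D}_2$ forces $T(Z,X)\in\mathcal{D}_1$, so that $\nabla^g_Z X\in\Gamma(\mathcal{D}_1)$; the argument for $\mathcal{D}_2$ is symmetric.

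At this stage de Rham's decomposition theorem, applied to the complete simply connected Riemannian manifold $(M,g)$, yields an isometric splitting $M\cong M_1\times M_2$ integrating $\mathcal{D}_1$ and $\mathcal{D}_2$. Since $\nabla$ preserves each distribution and $T$ decomposes accordingly, $\nabla$ factors as the product connection $\nabla_1\times\nabla_2$ and each $\nabla_i$ inherits the parallel skew torsion $T_i$. The main obstacle is precisely the passage from $\nabla$-parallelism to $\nabla^g$-parallelism of the distributions; this is where the total skew-symmetry of $T$ and the orthogonal splitting supplied by \Cref{lem:split torsion} work together, since without either ingredient the cross-term $T(Z,X)$ could leak out of $\mathcal{D}_i$ and the de Rham step would fail.
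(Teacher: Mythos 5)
Your proof is correct and follows exactly the route the paper intends: the paper gives no detailed argument, simply asserting that the theorem follows by "combining \Cref{lem:split torsion} with de Rham's theorem," and your write-up supplies precisely that — holonomy invariance of the splitting from \Cref{lem:split torsion}, passage to $\nabla$-parallel and then $\nabla^g$-parallel distributions via $\nabla^g=\nabla-\tfrac12 T$, and the de Rham decomposition. The details you add (handling $\ker T$ via case \emph{ii)} of the lemma and checking the cross-term $T(Z,X)$ stays in $\mathcal{D}_i$) are the right ones and contain no gaps.
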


For naturally reductive spaces this result is already known, see \cite{Tsukada96}. For naturally reductive spaces a criterion on the transvection algebra is more useful.

\begin{definition}
\label{def:irred decomp}
A naturally reductive decomposition $\g=\h\oplus \m$ is \emph{reducible} if its torsion, defined by \eqref{eq:torsion from lie bracket}, is given by $T=T_1+T_2\in \Lambda^3\m_1\oplus \Lambda^3\m_2$, for some non-trivial orthogonal decomposition $\m=\m_1\oplus \m_2$. Otherwise the decomposition is \emph{irreducible}.
\end{definition}

The following classical result due to Kostant (see also \cite{D'AtriZiller1979}) will prove very useful at several points in this paper. 

\begin{theorem}[Kostant,\cite{Kostant1956}] \label{thm:kostant}
Let $(\g=\h\oplus \m,g)$ be an effective naturally reductive decomposition. Then $\mf{k}:= [\m,\m]_{\h}\oplus \m$ is an ideal in $\g$ and there exists a unique $\ad(\mf{k})$-invariant non-degenerate symmetric bilinear form $\overline{g}$ on $\mf{k}$ such that $\overline{g}|_{\m\times\m}=g$ and $[\m,\m]_\h \perp \m$. Conversely, any $\ad(\g)$-invariant non-degenerate symmetric bilinear form on $\g=\h\oplus \m$ with $\m=\h^\perp$ and $\overline{g}|_{\m\times\m}$ positive definite gives a naturally reductive decomposition.
\end{theorem}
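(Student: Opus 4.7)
The plan is to build $\overline g$ piece by piece on $\mf k = [\m,\m]_\h\oplus\m$, letting the desired $\ad(\mf k)$-invariance force the values on $[\m,\m]_\h$; uniqueness will then essentially come for free. The first step is to check that $\mf k$ is an ideal of $\g$. The inclusions $[\h,\m]\subset\m$ and $[\m,\m]\subset\mf k$ are immediate from the reductive decomposition, and for $h\in\h$, $x,y\in\m$ the Jacobi identity
\[
[h,[x,y]] = [[h,x],y] + [x,[h,y]]
\]
has its $\h$-component on the right inside $[\m,\m]_\h$, while on the left $[h,[x,y]_\m]\in\m$, forcing $[h,[x,y]_\h]\in[\m,\m]_\h$.

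Next I would define $\overline g$. Set $\overline g|_{\m\times\m}:=g$, declare $\m\perp[\m,\m]_\h$, and observe that the $\ad(x)$-invariance identity $\overline g([x,y],\xi)+\overline g(y,[x,\xi])=0$ for $x,y\in\m$ and $\xi\in[\m,\m]_\h$ leaves no choice but
\begin{equation*}
\overline g([x,y]_\h,\xi) \;=\; -g(y,[x,\xi]),
\end{equation*}
which I would take as the definition, extended bilinearly. The main technical obstacle is well-definedness: if $\sum_i[x_i,y_i]_\h = 0$, one must show $\sum_i g(y_i,[x_i,\xi])=0$ for every $\xi\in[\m,\m]_\h$. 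The approach is to view $x\wedge y\mapsto g(y,[x,\xi])$ as a linear functional on $\Lambda^2\m$ (antisymmetry in $x,y$ comes from $\ad(\xi)\in\so(\m)$), and to prove the symmetry
\[
g(y,[x,[u,v]_\h]) \;=\; g(v,[u,[x,y]_\h])
\]
by a direct computation that uses Jacobi in $\g$ together with the naturally reductive identity for brackets in $\m$. This symmetry simultaneously gives well-definedness (if $\sum_i[x_i,y_i]_\h=0$, swap the roles to see the sum is a pairing with the zero element) and symmetry of $\overline g$ on $[\m,\m]_\h$.

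It remains to check that $\overline g$ is $\ad(\mf k)$-invariant, non-degenerate, and unique, and to handle the converse. Invariance under $\ad(x)$ for $x\in\m$ holds by construction (the formula was set up precisely for this), while invariance under $\ad(\xi)$ for $\xi\in[\m,\m]_\h$ follows by applying the defining formula twice and reducing via Jacobi to the skew-symmetry $\ad(\xi)\in\so(\m)$. Non-degeneracy on $[\m,\m]_\h$ is immediate from effectiveness: if $\overline g(\xi,\cdot)\equiv0$, then $g(y,[x,\xi])=0$ for all $x,y\in\m$, so $\ad(\xi)|_\m=0$, so $\xi=0$. Uniqueness is forced, since any $\ad(\mf k)$-invariant form with $\overline g|_\m=g$ and $\m\perp[\m,\m]_\h$ must obey the displayed formula. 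For the converse, given an $\ad(\g)$-invariant non-degenerate symmetric $\overline g$ with $\m=\h^\perp$ and $\overline g|_\m$ positive definite, reductivity $[\h,\m]\subset\m$ follows from $\overline g([h,x],h')=-\overline g(x,[h,h'])=0$ for $h,h'\in\h$, $x\in\m$, and then for $x,y,z\in\m$,
\[
g([x,y]_\m,z) \;=\; \overline g([x,y],z) \;=\; -\overline g(y,[x,z]) \;=\; -g(y,[x,z]_\m),
\]
which is precisely the naturally reductive condition. The hardest step throughout is the symmetry/well-definedness identity, where effectiveness and Jacobi have to be interleaved carefully.
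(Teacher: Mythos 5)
The paper itself offers no proof of this statement---it is quoted as a classical theorem with references to Kostant and to D'Atri--Ziller---so there is no internal argument to compare against; your proposal is, in essence, the standard direct proof, and it is correct in outline. The ideal property of $\mf k$, the forced formula $\overline g([x,y]_\h,\xi)=-g(y,[x,\xi])$, non-degeneracy via effectiveness, uniqueness, and the converse are all handled correctly (note that the skew-symmetry of $\ad(\h)|_\m$, which you invoke for the antisymmetry in $x\wedge y$ and for invariance under $\ad(\xi)$, is implicitly part of the paper's setup, since $\ad$ is regarded there as a map $\h\to\so(\m)$). The one step you assert rather than carry out is indeed the crux: the symmetry $g(y,[x,[u,v]_\h])=g(v,[u,[x,y]_\h])$, which is exactly the pair symmetry $R(x,y,u,v)=R(u,v,x,y)$ of the canonical curvature $R(x,y)=-\ad([x,y]_\h)$. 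It does follow from the ingredients you name, but by a slightly longer argument than ``direct computation'' suggests: projecting the Jacobi identity for $x,y,u\in\m$ onto $\m$ and pairing with $v$ gives the first Bianchi identity with torsion, $\mf S^{x,y,u}\, g([[x,y]_\h,u],v)=-\mf S^{x,y,u}\, g([[x,y]_\m,u]_\m,v)$; since $g([a,b]_\m,c)$ is a $3$-form (natural reductivity), each torsion term can be rewritten as $g([[x,y]_\m,u]_\m,v)=g([u,v]_\m,[x,y]_\m)$, which is manifestly symmetric under exchange of the pairs $(x,y)$ and $(u,v)$; running the classical four-fold cyclic-sum argument (the one that extracts pair symmetry of the Riemannian curvature from the Bianchi identity) on this torsion-corrected identity then makes all torsion contributions cancel and yields the claimed symmetry. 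With that identity supplied, your well-definedness, the symmetry of $\overline g$ on $[\m,\m]_\h$, and the $\ad(\mf k)$-invariance checks go through exactly as you describe, so the proposal fills in a complete proof of a result the paper only cites.
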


Our first reducibility criterion is the following.
\begin{lemma}\label{lem:reducible iff ideals}
	Let $\g=\h\oplus \m$ be a naturally reductive decomposition with $\g$ its transvection algebra. Let $\overline{g}$ be the unique $\ad(\g)$-invariant non-degenerate symmetric bilinear form from Kostant's theorem, see {\rm \Cref{thm:kostant}}. The reductive decomposition $\g=\h\oplus \m$ is reducible if and only if there exist two non-trivial orthogonal ideals $\g_1\subset \g$ and $\g_2\subset \g$ with respect to $\overline{g}$ such that $\g = \g_1\oplus \g_2$ and $\h=\h_1\oplus \h_2$ with $\h_i\subset \g_i$ for $i=1,2$.
\end{lemma}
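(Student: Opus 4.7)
The backward direction is direct. Suppose $\g=\g_1\oplus \g_2$ with orthogonal ideals and $\h=\h_1\oplus\h_2$ with $\h_i\subset\g_i$. Set $\m_i := \g_i\cap \m$. Since $\g_1\perp\g_2$, $\h\perp \m$ (Kostant), and $\h_i\subset\g_i$, one gets $\m=\m_1\oplus\m_2$ as an orthogonal decomposition. As $\g_1\cap\g_2=0$ and both are ideals, $[\g_1,\g_2]=0$, so $T(x,y)=-[x,y]_\m=0$ for $x\in\m_1, y\in\m_2$, showing $T\in\Lambda^3\m_1\oplus\Lambda^3\m_2$.

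For the forward direction, suppose $T=T_1+T_2$ with $T_i\in\Lambda^3\m_i$ for a non-trivial orthogonal decomposition $\m=\m_1\oplus\m_2$. Applying \Cref{lem:split torsion} to each $h\in\h$ yields $\h=\h_1\oplus\h_2$ where $\h_i$ preserves $\m_i$ and annihilates $\m_j$ for $j\neq i$; in particular $[\h_1,\h_2]=0$. Running the argument of \Cref{lem:lie algebra fiber bundle basic lemma} i) — combining $\h$-invariance of $\m_1,\m_2$ with the Killing-form symmetry of $R$ on $\Lambda^2\m$ — gives $R(\m_1,\m_2)=0$, so by effectiveness $[\m_1,\m_2]_\h=0$; together with $[\m_1,\m_2]_\m=-T(\m_1,\m_2)=0$ this yields $[\m_1,\m_2]=0$.

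Now for $x,y\in\m_1$ and $z\in\m_2$ the Jacobi identity in $\g$ collapses to $[[x,y]_\h,z]=0$ (using that $[x,y]_\m\in\m_1$ commutes with $z$), which is $R(x,y)z=0$. Hence $R(x,y)$ annihilates $\m_2$, and by effectiveness $[x,y]_\h\in\h_1$. Symmetrically $[\m_2,\m_2]_\h\subset\h_2$. Setting $\g_i:=\h_i\oplus\m_i$ produces two subalgebras with $\g=\g_1\oplus\g_2$ and $[\g_1,\g_2]=0$, hence two commuting non-trivial ideals with $\h_i\subset\g_i$.

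It remains to verify $\g_1\perp\g_2$ with respect to $\overline{g}$. Orthogonality of $\m_1,\m_2$ is given and $\h\perp\m$ by Kostant, so the only non-trivial check is $\overline{g}(\h_1,\h_2)=0$. Since $\g$ is the transvection algebra, $\h=[\m,\m]_\h$, and the previous step yields $\h_1=[\m_1,\m_1]_\h$. For $x,y\in\m_1$ and $h_2\in\h_2$, writing $[x,y]=[x,y]_\h+[x,y]_\m$ and using $[x,y]_\m\in\m\perp\h$ together with $\ad$-invariance of $\overline{g}$,
\[
\overline{g}([x,y]_\h,h_2)=\overline{g}([x,y],h_2)=-\overline{g}(y,[x,h_2])=0,
\]
since $h_2\in\h_2$ annihilates $x\in\m_1$. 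The principal obstacle is this final orthogonality step: propagating the purely tensorial splitting of $T$ up to orthogonality with respect to Kostant's form $\overline{g}$ requires both effectiveness and the transvection hypothesis, via the identity $\h=[\m,\m]_\h$.
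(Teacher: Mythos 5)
Your easy direction (ideals $\Rightarrow$ reducible) is essentially the paper's, except for one omission: \Cref{def:irred decomp} requires the decomposition $\m=\m_1\oplus\m_2$ to be non-trivial, so you must note that $\m_i:=\g_i\cap\m\neq\{0\}$; this follows because a non-zero ideal contained in $\h$ would satisfy $[\g_i,\m]\subset\h\cap\m=\{0\}$, contradicting effectiveness of the transvection algebra. That is a one-line fix.

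The converse direction has a genuine gap in its first step. \Cref{lem:split torsion} does not ``yield $\h=\h_1\oplus\h_2$ where $\h_i$ preserves $\m_i$ and annihilates $\m_j$'': at best it shows that each individual $h\in\h$ preserves $\m_1$ and $\m_2$, i.e.\ it embeds $\h$ into $\so(\m_1)\oplus\so(\m_2)$, and a subalgebra of a direct sum (e.g.\ a diagonal) need not split into summands acting only on one factor. Producing exactly this splitting of $\h$ is the whole content of the lemma, so you are assuming the conclusion, and your later steps ($[\h_1,\h_2]=0$, $\h=\h_1\oplus\h_2$, hence $\g=\g_1\oplus\g_2$ exhausting $\g$) all lean on it. Moreover the hypotheses of \Cref{lem:split torsion} are not verified: it needs $T_1$ without kernel (either $T$ kernel-free, or $V_2=\ker T$), while the splitting furnished by \Cref{def:irred decomp} is arbitrary; when $T$ has a kernel the chain ``torsion splits $\Rightarrow$ $\h$-invariance of the $\m_i$ $\Rightarrow$ $R(\m_1,\m_2)=0$'' genuinely fails (for an irreducible symmetric space $T=0$ splits along any orthogonal decomposition of $\m$, yet $\h$ need not preserve the factors and $\g$ is simple). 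The paper's proof avoids both issues: it starts the converse from $[\m_1,\m_2]=\{0\}$ (its reading of reducibility in this setting), defines $\h_i:=[\m_i,\m_i]_\h$, obtains $\h=\h_1+\h_2$ from the transvection identity $\h=[\m,\m]_\h$, proves $[\h_1,\m_2]=\{0\}$ by the Jacobi identity, gets $\h_1\cap\h_2=\{0\}$ from effectiveness, and then $\h_1\perp\h_2$ by $\ad$-invariance of $\overline{g}$ exactly as in your last display. Your Jacobi computation and orthogonality step are the right ingredients; the repair is to define the $\h_i$ as these brackets and derive the splitting of $\h$ from them, rather than importing it from \Cref{lem:split torsion}.
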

\begin{proof}
	Assume two such ideals exist. Let $\m_i$ be the orthogonal complement of $\h_i$ inside $\g_i$ for $i=1,2$. Note that $\m_i\neq \{0\}$ for $i=1,2$, because otherwise $\g$ is not the transvection algebra. We clearly have $T\in \Lambda^3\m_1 \oplus \Lambda^3\m_2$, where $T$ is defined by \eqref{eq:torsion from lie bracket}, and the decomposition $\g=\h\oplus \m$ is reducible, see \Cref{def:irred decomp}. 
	
	Conversely suppose that $\g=\h\oplus \m$ is the transvection algebra of a reducible naturally reductive decomposition, i.e. $\m=\m_1\oplus \m_2$ with $\m_1\neq \{0\}$, $\m_2\neq \{0\}$, $\m_1\perp\m_2$, and $[\m_1,\m_2]=\{0\}$. Then
	\[
	\g = [\m,\m]_\h \oplus \m = ([\m_1,\m_1]_\h \oplus \m_1) + ([\m_2,\m_2]_\h\oplus \m_2) = (\h_1\oplus \m_1)+ (\h_2\oplus \m_2),
	\]
	where $\h_i:=[\m_i,\m_i]_\h$. Let $m,m'\in \m_1$ and $n\in \m_2$. Then we have
	\[
	[[m,m']_\h,n]=[[m,m'],n] = [[m,n],m']+[m,[m',n]]=0+0=0.
	\]
	Since elements of the form $[m,m']_\h$ span $\h_1$ it follows that $[\h_1,\m_2] = \{0\}$. In the same way we get $[\h_2,\m_1]=\{0\}$. This also implies that $[\h_1\cap \h_2,\m] =\{0\}$ and because the reductive decomposition is effective we get $\h_1\cap\h_2=\{0\}$. Let $h_1\in \h_1$ and $m_2,m_2'\in \m_2$. Then we have
	\[
	\overline{g}(h_1,[m_2,m_2']_\h) = \overline{g}(h_1,[m_2,m_2'])=\overline{g}([h_1,m_2],m_2')=0.
	\]
	This implies that $\h_1\perp \h_2$ with respect to $\overline{g}$. We conclude that $\g=(\h_1\oplus\m_1)\oplus(\h_2\oplus \m_2)$ is the direct sum of two ideals in the way required.
\end{proof}

\subsection{\texorpdfstring{$(\mathfrak{k},B)$}{(k,B)}-extensions}
Next we briefly recall how a $(\mf k,B)$-extension is defined in \cite{Storm2018}. For a non-zero transvection algebra $\g=\h\oplus \m$ we define a Lie algebra $\s(\g)$ by
\[
\s(\g) = \{f\in \Der(\g):f(\h)=\{0\},~f(\m)\subset \m,~f|_\m\in \so(\m)\}.
\]
If $\g=\{0\}$, then we define $\s(\{0\}) = \so(\infty)$.
For every finite dimensional subalgebra $\mf k\subset \s(\g)$ with an $\ad(\mf k)$-invariant metric $B$ on $\mf k$ we can define a Lie algebra structure on
\[
\g(\mf k):=\h\oplus \mf k\oplus \n\oplus \m,
\]
where $\n\equiv \mf k$ is another copy of $\mf k$. Let $\varphi:\mf k\to \so(\m)$ be the natural Lie algebra representation and let $\psi:\mf k \to \so(\n\oplus\m)$ be the Lie algebra representation $\psi:=\ad\oplus \varphi$. Furthermore, let $(T_0,R_0)$ be the infinitesimal model of $\g=\h\oplus \m$. The Lie bracket on $\g(\mf k)$ is defined by:
\begin{align*}
[h+k,n+m] &= \psi(k)(n+m) + h(m),\quad \forall h\in \h,\forall k\in \mf{k},\forall n\in\n,\forall m\in\m,\\
[h_1+k_1,h_2+k_2] &= [h_1,h_2] + [k_1,k_2] ,\quad \forall h_1,h_2\in \h,\forall k_1,k_2\in \mf{k},\\
[x,y] &= -R_0(x,y) - R_{\mf k}(x,y) - T(x,y)\quad \forall x,y\in \n\oplus \m,
\end{align*}
where we identified $\im(R_0)$ with $\h$, and
\begin{equation}\label{eq: T}
R_{\mf k}(x,y) = \sum_{i=1}^l \psi(k_i)(x,y) k_i,~\quad T=T_0 + \sum_{i=1}^l \varphi(k_i) \wedge n_i + 2 T_{\n},
\end{equation}
and $T_\n (x,y,z) = B([x,y],z)$. Together with the metric $g:=B\oplus g_0$ on $\n\oplus \m$ this defines a naturally reductive decomposition with isotropy algebra $\h\oplus \mf k$, see \cite{Storm2018}. The Lie algebra $\g(\mf k)$ is known as the double extension of $\g$ by $\mf k$, see \cite{MedinaRevoy1985}. The naturally reductive infinitesimal model associated to the decomposition $\g=\h\oplus \mf k\oplus\n\oplus \m$ is $(T,R)$, where $T$ is given by \eqref{eq: T} and $R$ is given by
\begin{equation}\label{eq: R}
R = R_0 + R_{\mf k}.
\end{equation}

\begin{definition}
	We call the infinitesimal model $(T,R)$ the $(\mf k,B)$-\emph{extension} of $(T_0,R_0)$. We also call a naturally reductive decomposition with the infinitesimal model $(T,R)$ the $(\mf k,B)$-extension of the decomposition $\g = \h\oplus \m$. 
\end{definition}
An important property of the Lie algebra $\g(\mf k)$ is that the diagonal $\a \subset \mf k\oplus \n$ is an abelian ideal. The spaces studied in \Cref{sec:type II} are characterized by such ideals. It is interesting to note that every vector in $\mf a$ induces a Killing vector field of constant length on the corresponding homogeneous manifold, see \cite{Nikonorov2013}.

It will be convenient to have the following different formulation of $\s(\g)$, which is used in \Cref{lem:h^+ are derivations}.

\begin{lemma}\label{lem:s(g) as intertwining maps}
Let $\g=\h\oplus \m$ be a naturally reductive decomposition with $\g\neq\{0\}$ its transvection algebra. Let $(T_0,R_0)$ be the infinitesimal model of the decomposition. Let $\so_\h(\m)=\{k\in \so(\m): [k,\ad(h)]_{\so(\m)}=0,~\forall h\in \h\}$.
Then the following holds
\[
\s(\g) \cong \{h\in \so_\h(\m): h\cdot T_0 =0\}.
\]
\end{lemma}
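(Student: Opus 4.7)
The plan is to show that restriction $f \mapsto h := f|_\m$ gives the claimed isomorphism. Both directions come down to unpacking the derivation identity $f([a,b]) = [f(a),b] + [a,f(b)]$ against the naturally reductive bracket \eqref{eq:Nomizu Lie bracket}, together with the fact that $\g$ is the transvection algebra.

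For the forward direction, given $f \in \s(\g)$, I set $h := f|_\m \in \so(\m)$. Applying the derivation identity to $[h',x]$ with $h' \in \h$, $x \in \m$, and using $f(h')=0$, I read off $h \circ \ad(h')|_\m = \ad(h')|_\m \circ h$, so $h \in \so_\h(\m)$. Applying it to $[x,y]$ for $x,y \in \m$ and projecting to $\m$ yields
\begin{equation*}
h([x,y]_\m) = [h(x),y]_\m + [x,h(y)]_\m,
\end{equation*}
which by \eqref{eq:torsion from lie bracket} is exactly $h \cdot T_0 = 0$.

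For the reverse direction, given $h \in \so_\h(\m)$ with $h \cdot T_0 = 0$, I define $f \in \End(\g)$ by $f|_\h := 0$, $f|_\m := h$, and verify $f$ is a derivation. The cases $[\h,\h]$, $[\h,\m]$, and the $\m$-component on $[\m,\m]$ are handled by the two conditions on $h$. The main obstacle is the $\h$-component on $[\m,\m]$, which (since $f$ annihilates $[x,y]_\h$ and maps $[x,y]_\m$ into $\m$) rearranges to
\begin{equation*}
[h(x),y]_\h + [x,h(y)]_\h = 0, \qquad\text{equivalently}\quad R_0(h(x),y) + R_0(x,h(y)) = 0 \text{ in } \so(\m).
\end{equation*}

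To establish this last identity I would invoke Kostant's theorem (\Cref{thm:kostant}): since $\g$ is the transvection algebra, there is an $\ad(\g)$-invariant symmetric form $\overline{g}$ on $\g$ extending $g$ with $\h \perp \m$, and its invariance together with $\h\perp\m$ gives the pair symmetry
\begin{equation*}
g(R_0(x,y)z,w) = -\overline{g}([x,y]_\h,[z,w]_\h) = g(R_0(z,w)x,y).
\end{equation*}
Combined with $h \in \so_\h(\m)$ (so $h$ commutes with every $R_0(z,w) \in \h$) and $h \in \so(\m)$, this yields
\begin{align*}
g\bigl((R_0(h(x),y) + R_0(x,h(y)))z,\, w\bigr)
&= g(R_0(z,w)h(x),\,y) + g(R_0(z,w)x,\,h(y))\\
&= g(h\,R_0(z,w)x,\,y) + g(R_0(z,w)x,\,h(y)) = 0.
\end{align*}
Nondegeneracy of $g$ in $z,w$ forces $R_0(h(x),y) + R_0(x,h(y)) = 0$, and effectiveness (injectivity of $\ad|_\h$) lifts the equality to $\h$. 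The two assignments $f \leftrightarrow h$ are manifestly mutually inverse, completing the proof.
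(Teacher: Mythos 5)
Your proof is correct, and its skeleton coincides with the paper's: restriction $f\mapsto f|_\m$ one way, extension by zero on $\h$ the other, with everything routine except the vanishing of the $\h$-component $[h(x),y]_\h+[x,h(y)]_\h$, equivalently $R_0(h\cdot(x\wedge y))=0$. Where you diverge is in how this last point is established. The paper stays inside $\so(\m)\cong\Lambda^2\m$: it uses $\im(R_0)=\ad(\h)$ together with the symmetry of $R_0$ as an operator on $\Lambda^2\m$ (so that $R_0$ kills $\ad(\h)^\perp$), writes $h\cdot(x\wedge y)=[h,x\wedge y]_{\so(\m)}$ via \Cref{lem:square operation of 2-forms}, and then uses invariance of the Killing form plus $[\ad(\h),h]=0$ to conclude $h\cdot(x\wedge y)\perp\ad(\h)$. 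You instead derive the pair symmetry $g(R_0(x,y)z,w)=g(R_0(z,w)x,y)$ from Kostant's theorem (\Cref{thm:kostant}, legitimate here since the transvection hypothesis gives $[\m,\m]_\h=\h$, so the invariant form lives on all of $\g$) and then kill the expression pointwise using $[h,R_0(z,w)]=0$ and skew-symmetry of $h$, finishing with effectiveness to lift the identity from $\so(\m)$ to $\h$. The two mechanisms are equivalent — the operator symmetry the paper uses implicitly is exactly the pair symmetry you prove from the Kostant form — but your version makes the dependence on the bi-invariant extension explicit and avoids invoking the contraction identity of \Cref{lem:square operation of 2-forms}, at the cost of an extra four-tensor computation; the paper's version is shorter once the symmetry of $R_0$ with respect to the Killing form is taken as known.
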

\begin{proof}
For all $k\in \s(\g)$, $h\in \h$ and $m\in \m$ we have
\[
k([h,m]) = [k(h),m] + [h,k(m)] = [h,k(m)].
\]
In other words $\varphi(k) \in \so_\h(\m)$. Furthermore, for all $m_1,m_2\in \m$ we have
\begin{align*}
k(T_0(m_1,m_2))&=-k([m_1,m_2]_\m) = -k([m_1,m_2])  \\
 &= -[k(m_1),m_2)]_\m - [m_1,k(m_2)]_\m =  T_0(k(m_1),m_2) + T_0(m_1,k(m_2)).
\end{align*}
We conclude that $\varphi(k)\cdot T_0 =0$. 

To find a map in the other direction we let $k\in \so_\h(\m)$ with $k\cdot T_0=0$. We define 
\[
\hat{k}:\g \to \g;\quad \hat{k}(h+m) := k(m)
\]
and we show that $\hat{k}\in \s(\g)$. For all $h,h'\in \h$ and $m\in \m$ we have 
\[
\hat{k}([h,h'+m])=\hat{k}([h,h'+m]_\m)=\hat{k}([h,m])=[h,\hat{k}(m)]=[\hat{k}(h),h'+m] + [h,\hat{k}(h'+m)],
\]
where in the before last equality we used $k\in \so_\h(\m)$. It remains to show that for all $m_1,m_2\in \m$ we have
\[
\hat{k}([m_1,m_2]) = [\hat{k}(m_1),m_2]+[m_1,\hat{k}(m_2)].
\]
From $k\cdot T_0=0$ we immediately get
\[
\hat{k}([m_1,m_2])=\hat{k}([m_1,m_2]_\m)= [\hat{k}(m_1),m_2]_\m + [m_1,\hat{k}(m_2)]_\m. 
\]
Furthermore, we have
\begin{align*}
\ad([\hat{k}(m_1),m_2]_\h + [m_1,\hat{k}(m_2)]_\h) &= -R_0(\hat{k}(m_1),m_2) - R_0(m_1,\hat{k}(m_2)) \\
&= -R_0(\hat{k}(m_1)\wedge m_2+ m_1\wedge \hat{k}(m_2))\\
&= -R_0(k\cdot(m_1\wedge m_2)).
\end{align*}
The right-hand-side vanishes precisely when $k\cdot (m_1\wedge m_2)\in \ad(\h)^\perp$, where $\ad(\h)^\perp$ is the orthogonal complement of $\ad(\h)$ in $\so(\m)$ with respect to the Killing form $B_{\so}$ of $\so(\m)$. Note that \Cref{lem:square operation of 2-forms} gives us $k\cdot(m_1\wedge m_2) = [k,m_1\wedge m_2]_{\so(\m)}$. For all $h\in \h$ we have
\[
B_{\so}(\ad(h),[k,m_1\wedge m_2]_{\so(\m)}) =  B_{\so}([\ad(h),k],m_1\wedge m_2) = 0. 
\]
This implies that $R_0(k\cdot (m_1\wedge m_2)) =0$ and thus also $[\hat{k}(m_1),m_2]_\h + [m_1,\hat{k}(m_2)]_\h=0$. From this we now obtain
\[
\hat{k}([m_1,m_2]) =  [\hat{k}(m_1),m_2]_\m + [m_1,\hat{k}(m_2)]_\m=[\hat{k}(m_1),m_2] + [m_1,\hat{k}(m_2)].
\]
Consequently, $\hat{k}$ defines a derivation of $\g$ and $\hat{k}\in \s(\g)$. It is clear that the above two maps are inverse to each other. We conclude that $
\s(\g) \cong \{h\in \so_\h(\m): h\cdot T_0 =0\}$.
\end{proof}

\section{General form of a naturally reductive space\label{sec:general form}}
We define two types of naturally reductive spaces:
\begin{enumerate}[\hspace{20pt} Type I:]
 \item The transvection algebra is semisimple.
 \item The transvection algebra is not semisimple.
\end{enumerate}
First we discuss some basic results for spaces of type I. Most of this section is about describing the spaces of type II. If a Lie algebra is not semisimple, then it contains a non-trivial abelian ideal. This fact will allow us to show that every naturally reductive space of type II is an infinitesimal fiber bundle over another naturally reductive space, see \Cref{def:Lie algebra fiber bundle}. In \Cref{prop:torsion and curvature of type II} we derive a formula for the infinitesimal model of the total space in terms of the infinitesimal model of the base space and a certain Lie algebra representation. This leads us to the main result: 
for every naturally reductive space of type II there exists a unique naturally reductive decomposition of the form $\g=\h\oplus \m \oplus_{L.a.} \R^n$, with $\g$ as its transvection algebra and $\h\oplus \m$ a semisimple algebra, such that the original infinitesimal model of type II is a $(\mf k,B)$-extension of $\g=\h\oplus \m \oplus_{L.a.} \R^n$. Consequently, the construction presented in \cite{Storm2018} generates all naturally reductive spaces.

\subsection{Type I}
In section we will use that there exists for every naturally reductive space a decomposition $\g=\h\oplus \m$ of that space such that the metric on $\m$ is induced by an $\ad(\g)$-invariant non-degenerate symmetric bilinear form on $\g$ for which $\h$ and $\m$ are perpendicular, see \Cref{thm:kostant}.
The results below about spaces of type I are quite elementary. 
The most interesting statement in this section is \Cref{lem:noncompact simple is sym} and the partial duality this induces, see \Cref{def:dual pair} and \Cref{cor:dual compact}.

\begin{lemma}
\label{lem:g simple implies irred}
Let $\g$ be a compact simple Lie algebra together with a negative multiple of its Killing form as $\ad(\g)$-invariant non-degenerate symmetric bilinear form. Any proper subalgebra $\h\subset\g$ gives a reductive decomposition $\g=\h\oplus \m$, with $\m=(\h)^\perp$. This is either an irreducible naturally reductive decomposition with non-zero torsion or the decomposition of an irreducible symmetric space.
\end{lemma}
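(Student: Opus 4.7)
The plan is to verify in succession: naturally reductive structure, effectiveness plus identification of the transvection algebra, and finally a case split on whether the torsion vanishes. Since $\g$ is compact simple, a negative multiple of the Killing form is a positive definite $\ad(\g)$-invariant symmetric bilinear form $\overline{g}$. The converse direction of Kostant's theorem (\Cref{thm:kostant}) then immediately gives that $\g=\h\oplus\m$ with $\m=\h^\perp$ is a naturally reductive decomposition with respect to the induced metric on $\m$.

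Next I would check that $\g$ itself is the transvection algebra. The key observation is that $\mathfrak{k}:=[\m,\m]_\h\oplus\m$ is an ideal of $\g$ by \Cref{thm:kostant}; it is non-zero since $\h\subsetneq\g$ forces $\m\neq\{0\}$, so simplicity of $\g$ gives $\mathfrak{k}=\g$. Hence $[\m,\m]_\h=\h$ and in particular $\im(R)=\ad(\h)$. For effectiveness, the set $I:=\{h\in\h:[h,\m]=0\}$ is easily seen to be an ideal of $\g$ (it is stable under $\ad(\h)$ by Jacobi, and $[\m,I]=0$ trivially), so $I=\{0\}$ again by simplicity.

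With these preliminaries in place I would split on whether the torsion $T(x,y)=-[x,y]_\m$ is zero. \emph{Case 1: $T\neq 0$.} Suppose for contradiction the decomposition is reducible. Then \Cref{lem:reducible iff ideals} produces two non-zero orthogonal ideals $\g_1,\g_2\subset\g$ with $\g=\g_1\oplus\g_2$, directly contradicting simplicity. \emph{Case 2: $T=0$.} Then $[\m,\m]\subset\h$ and $(\g,\h)$ is a symmetric pair. Suppose $\m=\m_1\oplus\m_2$ is a non-trivial orthogonal decomposition into $\h$-submodules. For any $m_1\in\m_1$, $m_2\in\m_2$ and $h\in\h$, $\ad$-invariance of $\overline{g}$ yields
\[
\overline{g}([m_1,m_2],h)=-\overline{g}(m_2,[m_1,h])=0,
\]
since $[m_1,h]\in\m_1\perp\m_2$. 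Because $[m_1,m_2]\in\h$ and $\overline{g}$ is non-degenerate on $\h$ (Kostant gives non-degeneracy on all of $\g$ with $\h\perp\m$), we conclude $[\m_1,\m_2]=0$. Then $\g_1:=\m_1\oplus[\m_1,\m_1]$ is closed under $\ad(\h)$, under $\ad(\m_1)$ (by Jacobi), and under $\ad(\m_2)$ (since $[\m_2,\m_1]=0$ and $[\m_2,[\m_1,\m_1]]\subset[[\m_2,\m_1],\m_1]+[\m_1,[\m_2,\m_1]]=0$), so it is a proper non-zero ideal of $\g$, contradicting simplicity.

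The main subtlety is that the reducibility criterion \Cref{lem:reducible iff ideals} is phrased via the torsion and so cannot be applied directly when $T=0$; the symmetric case therefore has to be treated by hand. Once one notices that $\overline{g}$ is non-degenerate on $\h$, the short $\ad$-invariance computation forcing $[\m_1,\m_2]=0$ is the decisive ingredient, and the rest reduces to routine ideal-chasing. All other steps are bookkeeping around Kostant's theorem and the simplicity of $\g$.
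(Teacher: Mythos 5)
Your proof is correct, but it takes a genuinely different route from the paper's in both halves. For the non-symmetric case the paper does not pass through \Cref{lem:reducible iff ideals}: it applies \Cref{lem:split torsion} directly to $\ad(\h)\subset\{h\in\so(\m):h\cdot T=0\}$, arguing that a splitting $T\in\Lambda^3\m_1\oplus\Lambda^3\m_2$ makes the summands $\h$-invariant and yields a non-zero ideal containing $\h\oplus\m_1$, which by simplicity must be all of $\g$, forcing $\m_1=\m$. You instead invoke \Cref{lem:reducible iff ideals}, which obliges you first to verify its hypothesis that $\g$ is the transvection algebra; your verification (effectiveness via the ideal $\{h\in\h:[h,\m]=0\}$, and $[\m,\m]_\h=\h$ via \Cref{thm:kostant} plus simplicity) is sound and is essentially the content of \Cref{lem:holonomy of space with g simple}, parts \emph{ii)} and \emph{iii)}, which the paper proves separately afterwards, so your argument is heavier but self-contained. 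One small ordering point: \Cref{thm:kostant} as stated assumes the decomposition is effective, so establish effectiveness before citing the ideal $[\m,\m]_\h\oplus\m$; since your effectiveness argument is independent of that citation, the reordering is harmless. For the symmetric case the paper simply appeals to the classical fact that a symmetric pair with $\g$ compact simple is irreducible, whereas you prove isotropy irreducibility by hand: the invariance computation forcing $[\m_1,\m_2]=0$ and the resulting proper ideal $\m_1\oplus[\m_1,\m_1]$ are correct, and you rightly point out that the torsion-based reducibility criterion degenerates when $T=0$, which is exactly why the lemma states the symmetric alternative separately. In short, your route buys an explicit treatment of the transvection hypothesis and of the symmetric case, while the paper's buys brevity through a direct application of \Cref{lem:split torsion}.
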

\begin{proof}
If the torsion is zero, then $\g=\h\oplus \m$ is a decomposition of an irreducible symmetric space. Suppose that the torsion $T$ defined by \eqref{eq:torsion from lie bracket} is non-zero and $T\in \Lambda^3 \m_1\oplus \Lambda^3\m_2$ for some orthogonal decomposition $\m=\m_1\oplus \m_2$. By \Cref{lem:split torsion} the subspace $\h\oplus \m_1$ defines a non-zero ideal of $\g$. Hence it has to be equal to $\g$, which means $\m_1 =\m$. We conclude that $\g=\h\oplus \m$ is irreducible.
\end{proof} 

The next result gives a criterion when $\g$ is the transvection algebra of a reductive decomposition $\g=\h\oplus \m$, with $\g$ semisimple.

\begin{lemma}\label{lem:holonomy of space with g simple}
Let $\g=\h\oplus \m$ be a naturally reductive decomposition with $\g$ semisimple and let $\m \perp \h$ with respect to some $\ad(\g)$-invariant non-degenerate symmetric bilinear form $\overline{g}$ on $\g$ such that $\overline{g}|_{\m\times\m} = g$.
Let $(T,R)$ be the infinitesimal model defined by \eqref{eq:torsion from lie bracket} and \eqref{eq:curvature from lie bracket}. The following hold:
\begin{enumerate}[i)]
 \item if $[\m,\m]_\h=\h$, then $\g$ is the transvection algebra of $(T,R)$,
 \item if $\g$ is simple, then $[\m,\m]_\h=\h$ and by i) the transvection algebra is equal to $\g$,
 \item if the reductive decomposition is effective, then $[\m,\m]_\h=\h$ and $\g$ is the transvection algebra.
\end{enumerate}
\end{lemma}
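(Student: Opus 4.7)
My approach is to introduce $\h':=[\m,\m]_\h\subset \h$ together with $\g':=\h'\oplus \m\subset \g$, show that $\g'$ is an ideal of $\g$, and then deduce all three statements from this one structural fact. Part (i) will follow from a centre argument that exploits semisimplicity, while (ii) and (iii) will amount to identifying $\g'$ with $\g$ by ideal-theoretic considerations in the semisimple case.

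First I would verify $\g'\triangleleft \g$. The inclusions $[\h,\m]\subset \m\subset \g'$ and $[\m,\m]\subset \h'\oplus\m=\g'$ are immediate from the definitions, so the only nontrivial point is $[\h,\h']\subset \h'$. For $h\in \h$ and $x,y\in \m$, the Jacobi identity gives
\[
[h,[x,y]]=[[h,x],y]+[x,[h,y]]\in [\m,\m]\subset \h'\oplus \m,
\]
and, combining this with $[h,[x,y]_\m]\in [\h,\m]\subset \m$ and taking $\h$-components, I obtain $[h,[x,y]_\h]\in \h'$.

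For (i), assume $\h'=\h$ and $\g$ semisimple. Then $\im(R)=\ad([\m,\m]_\h)=\ad(\h)$ holds by the definition of $R$, so only effectiveness remains. If $h\in \h$ satisfies $\ad(h)|_\m=0$, then for all $x,y\in \m$ the Jacobi identity forces $[h,[x,y]]=0$; since $[h,[x,y]_\m]=0$ as well, taking $\h$-components yields $[h,[x,y]_\h]=0$. Hence $h$ commutes with $[\m,\m]_\h=\h$ and, together with $[h,\m]=0$, lies in the centre of $\g$, so semisimplicity forces $h=0$. For (ii), with $\g$ simple (and $\m\neq\{0\}$, otherwise the statement is vacuous), the nonzero ideal $\g'$ must equal $\g$, so $\h'=\h$ and (i) applies. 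For (iii), semisimplicity of $\g$ provides an ideal complement $\g''$ of $\g'$; since $\m\subset \g'$, I have $\g''\subset \h$ and $[\g'',\m]\subset \g'\cap \g''=\{0\}$, so $\g''$ lies in the kernel of the injective map $\ad\colon \h\to \so(\m)$. Therefore $\g''=\{0\}$, $\g'=\g$, and (i) applies once more.

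The main obstacle, I expect, is part (i): the hypothesis $[\m,\m]_\h=\h$ does not directly give effectiveness, and one must first recognise that $\ker(\ad|_\h)$ is central in $\g$ before invoking semisimplicity. The subsequent identifications in (ii) and (iii) then follow from the standard ideal-splitting of semisimple Lie algebras.
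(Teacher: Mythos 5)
Your parts (i) and (ii) are correct and essentially the paper's own argument: in (i) you show, exactly as the paper does, that $\ker(\ad|_\h)$ commutes with $[\m,\m]_\h=\h$ via the Jacobi identity and hence is central, so semisimplicity kills it; in (ii) you verify directly that $\g':=[\m,\m]_\h\oplus\m$ is an ideal (the paper instead quotes this from Kostant's theorem, \Cref{thm:kostant}), and simplicity then forces $\g'=\g$. That self-contained verification of the ideal property is fine and is in fact the elementary half of Kostant's statement.

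Part (iii), however, has a genuine gap at the step ``since $\m\subset\g'$, I have $\g''\subset\h$.'' Containment of $\m$ in $\g'$ only gives $\g''\cap\m\subset\g''\cap\g'=\{0\}$, and a complementary ideal can perfectly well consist of ``diagonal'' elements $u=u_\h+u_\m$ with $u_\m\neq 0$; nothing in your argument rules this out, and then the effectiveness hypothesis --- which is an injectivity statement about $\ad$ restricted to $\h$ only --- cannot be applied to $\g''$ at all. (Your later inclusion $[\g'',\m]\subset\g'\cap\g''=\{0\}$ is correct, but by itself it does not place $\g''$ inside $\h$.) The missing ingredient is precisely the invariant bilinear form $\overline{g}$, which is the only reason it appears in the hypotheses of this part: since $\g'$ is an ideal of a semisimple algebra it is semisimple, so $\g'=[\g',\g']$, and invariance gives
\[
\overline{g}(\g'',\g')=\overline{g}\bigl(\g'',[\g',\g']\bigr)=\overline{g}\bigl([\g'',\g'],\g'\bigr)=0 ,
\]
hence $\g''\perp\m$ and therefore $\g''\subset\m^{\perp}=\h$ (using that $\overline{g}$ is non-degenerate and $\h\perp\m$). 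This is exactly the paper's argument (``since $\h_0$ is perpendicular to $\m$ with respect to $\overline{g}$ we have $\h_0\subset\h$''); once it is inserted, your conclusion $[\g'',\m]=\{0\}$ together with effectiveness gives $\g''=\{0\}$, $[\m,\m]_\h=\h$, and (i) finishes the proof as you intended.
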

\begin{proof}
\emph{i)} Let $\ad|_\h:\h\to \so(\m)$ denote the restricted adjoint representation. Let $\mf l:=\ker(\ad|_\h)$. Then $\mf l\subset \h$ is an ideal in $\g$. This ideal is either semisimple or $\{0\}$. Let $\mf l^\perp=\{g\in \g:[g,l]=0,~\forall l\in \mf l\}$ be the complementary ideal. Then $\m\subset \mf l^\perp$ and $[\m,\m]\subset [\mf l^\perp,\mf l^\perp]=\mf l^\perp$. This implies
\[
[\h,\mf l] = [[\m,\m]_\h,\mf l] = [[\m,\m],\mf l] = [[\m,\mf l],\m]+ [\m,[\m,\mf l]] = \{0\}
\]
and thus $\mf l\subset \h\subset \mf l^\perp$. We conclude that $\mf \l=\{0\}$ and $\ad|_\h$ is injective. In particular $\g$ is the transvection algebra.

\emph{ii)} Let $\mf k$ be the subalgebra $\mf k:=[\m,\m]_{\h}\oplus\m$. By Kostant's theorem, \Cref{thm:kostant}, $\mf k$ is a non-zero ideal in $\g$ and thus $\mf k=\g$. This gives us $[\m,\m]_{\h}=\h$ and thus by \emph{i)} the transvection algebra of $(T,R)$ is $\g$.

\emph{iii)} By Kostant's theorem $[\m,\m]_\h\oplus \m$ is an ideal in $\g$. Let $\h_0$ be a complementary ideal. Since $\h_0$ is perpendicular to $\m$ with respect to $\overline{g}$ we have $\h_0\subset \h$ and $[\h_0,\m]=\{0\}$. By assumption we obtain $\h_0=\{0\}$ and thus $[\m,\m]_\h=\h$. Now \emph{i)} implies that $\g$ is the transvection algebra.
\end{proof}

The case that $\g$ is simple and non-compact is very different from the compact case as the following lemma shows.

\begin{lemma}
\label{lem:noncompact simple is sym}
Let $\g$ be a non-compact simple Lie algebra and $\g=\h\oplus \m$ a naturally reductive decomposition. Then $(\g,\h)$ is a symmetric pair.
\end{lemma}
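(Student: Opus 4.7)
The plan is to combine Kostant's theorem with the Cartan decomposition of a non-compact real semisimple Lie algebra. First, \Cref{lem:holonomy of space with g simple}(ii) gives $[\m,\m]_\h = \h$, so $\g$ is the transvection algebra and $\ad\colon \h \to \so(\m, g)$ is injective, while \Cref{thm:kostant} extends $g$ to a non-degenerate $\ad(\g)$-invariant symmetric bilinear form $\overline{g}$ on all of $\g$ with $\h \perp_{\overline{g}} \m$.

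Since $\h$ embeds into the compact Lie algebra $\so(\m, g)$, it is itself an abstractly compact Lie algebra and admits an $\Ad(H)$-invariant positive definite inner product $g_\h$. Then $g_\h \oplus g$ is an $\Ad(H)$-invariant positive definite inner product on $\g$, so $\Ad_\g(H)$ has compact closure in $\Aut(\g)$; that is, $\h$ is compactly embedded in $\g$. By the standard structure theory of real semisimple Lie algebras, any compactly embedded subalgebra is conjugate into a maximal compact subalgebra, so after an inner automorphism I may fix a Cartan decomposition $\g = \f \oplus \p$ with Cartan involution $\theta$ and $\h \subset \f$.

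The crux is then a signature computation. By simplicity of $\g$, the invariant form $\overline{g}$ is a scalar multiple $cB_\g$ of the Killing form (with a small additional bookkeeping argument in the exceptional case where $\g$ is a complex simple Lie algebra viewed as real, in which the space of invariant forms is two-dimensional). Setting $\f' := \h^{\perp_{B_\g}} \cap \f$, one has the $B_\g$-orthogonal decomposition $\m = \h^{\perp_{\overline{g}}} = \f' \oplus \p$ with $B_\g|_{\f'} < 0$ and $B_\g|_\p > 0$. Positive-definiteness of $\overline{g}|_\m = cB_\g|_\m$ on the summand $\p$, which is nonzero by non-compactness of $\g$, forces $c > 0$, and then positive-definiteness on $\f'$ forces $\f' = 0$. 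Hence $\h = \f$ and $\m = \p$, so $\g = \h \oplus \m$ is the Cartan decomposition and $(\g,\h)$ is a Riemannian symmetric pair realized by $\theta$.

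The main obstacle I foresee is the black-box use of the classical structure theorem that compactly embedded subalgebras of a real semisimple Lie algebra are conjugate into a maximal compact subalgebra; once this is granted the remainder is Kostant's theorem combined with clean Killing-form signature analysis.
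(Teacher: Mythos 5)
Your argument is sound in its main line and takes a genuinely different route from the paper. The paper does not go through compact embeddedness: it invokes Wolf's criterion that a subalgebra of a reductive Lie algebra is reductive in it if and only if some Cartan involution $\sigma$ stabilizes it, then splits \emph{both} $\h$ and $\m$ into $\sigma$-eigenspaces, uses the definiteness of the Killing form on the two pieces of $\m$ to force $\m^+=\{0\}$ or $\m^-=\{0\}$, and excludes $\m^-=\{0\}$ by effectiveness (via \Cref{lem:holonomy of space with g simple}), ending with $[\m,\m]\subset\h$. You instead put $\h$ entirely inside a maximal compactly embedded subalgebra $\f$ of a Cartan decomposition and run a signature count on $\overline{g}=cB_\g$ to get $\h=\f$, $\m=\p$; this buys the slightly stronger conclusion that $\h$ is maximal compact and the decomposition is the Cartan one, at the price of black-boxing the conjugacy theorem for compactly embedded subalgebras instead of Wolf's stabilization theorem. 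One ordering point: \Cref{lem:holonomy of space with g simple}(ii) presupposes the invariant form $\overline{g}$, which you only produce afterwards via Kostant, and Kostant's statement assumes effectiveness. This is harmless -- the kernel of $\ad|_\h$ acting on $\m$ is an ideal of the simple algebra $\g$ contained in $\h$, hence zero, so effectiveness is automatic and Kostant then gives $[\m,\m]_\h\oplus\m=\g$ -- but you should arrange the steps in that order to avoid circularity.

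The one genuine caveat is the case you flag but do not carry out: $\g$ a complex simple Lie algebra regarded as real. There the invariant forms are $\mathrm{Re}(\lambda B_{\g\otimes\C})$ with $\lambda\in\C$, and your last step really does change: for non-real $\lambda$ one cannot conclude $\m=\p$. For instance for $\g=\sl(2,\C)$, $\h=\su(2)$ the orthogonal complement $\m=\h^{\perp_{\overline g}}$ is a ``rotated'' complement with $[\m,\m]\not\subset\h$, giving a non-symmetric naturally reductive structure on hyperbolic $3$-space. The lemma survives because the real part of a non-degenerate complex symmetric form has split signature, so $\dim\m\le\tfrac12\dim_\R\g$, which together with $\h\subset\f$ forces $\h=\f$ to be a compact real form; hence the \emph{pair} $(\g,\h)$ is still symmetric even though the decomposition is not the Cartan decomposition. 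So the bookkeeping you defer is short but not vacuous. Note that the paper's own proof silently assumes $\overline g$ is a multiple of the Killing form and therefore shares exactly this gap (indeed its intermediate claim $[\m,\m]\subset\h$ fails in the example above), so your explicit flagging of the exceptional case is, if anything, an improvement.
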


\begin{proof}
By \cite[Thm.~12.1.4]{Wolf2011} we know that any subalgebra $\h$ of a reductive Lie algebra $\g$ is reductive in $\g$ if and only if there is a Cartan involution of $\g$ which stabilizes $\h$, i.e. $\sigma(\h)=\h$. Let $\sigma$ be a Cartan involution which stabilizes $\h$ and let $\h= \h^+ \oplus \h^-$, with 
\[
\h^\pm=\{h\in \h:\sigma(h)=\pm h\}.
\]
The metric on $\m$ is induced from a multiple of the Killing form and $\m=\h^\perp$. The Killing form is invariant under all automorphisms. This implies that $\sigma$ preserves $\m$ as well. Hence we also have $\m=\m^+\oplus \m^-$ with
\[
\m^\pm=\{m\in \m:\sigma(m)=\pm m\}.
\]
Let $\mf g^-=\h^-\oplus \m^-$ and $\mf g^+=\h^+\oplus \m^+$. Since $\sigma$ is a Lie algebra automorphism we immediately get 
\[
[\g^+,\g^+]\subset \g^+,\quad [\g^-,\g^+]\subset \g^-,\quad \mbox{and}\quad [\g^-,\g^-]\subset \g^+.\]
The Killing form is positive definite on $\m^-$ and negative definite on $\m^+$. This implies that either $\m^+=\{0\}$ or $\m^-=\{0\}$.  Suppose that $\m^-=\{0\}$. Then we have
\[
[\h^-,\m]=[\h^-,\m^+]\subset \g^- \cap \m = \{0\}.
\]
This implies that $\h^-\subset \ker(\ad|_\h)$ and by \Cref{lem:holonomy of space with g simple} this implies that $\h^-=\{0\}$. In this case we have $\g^-=\{0\}$ and this contradicts the non-compactness of $\g$. Suppose that $\m^+=\{0\}$. Then we have
\[
[\m,\m]=[\m^-,\m^-]\subset \g^+ = \h^+\subset \h.
\]
This means $(\g,\h)$ is a symmetric pair.
\end{proof}

The above lemma greatly restricts the possible transvection algebras for a type I space. We will now discuss how this allows us to quite easily obtain all type I spaces from the classification of all compact type I spaces. 

\begin{definition}\label{def:nr pair}
	A \emph{naturally reductive pair} $(\g,\h)$ is a Lie algebra $\g$ together with a subalgebra $\h\subset \g$ such that there exists an $\ad(\g)$-invariant non-degenerate symmetric bilinear form $\overline{g}$  for which $\overline{g}|_{\m\times \m}$ is positive definite, where $\m = \h^\perp$ and such that $\g$ is the transvection algebra of the corresponding naturally reductive decomposition.
\end{definition}

\begin{definition}\label{def:dual pair}
	A naturally reductive pair $(\g^*,\h^*)$ is a \emph{partial dual} of a naturally reductive pair $(\g,\h)$ when $\g^*$ is a real form of $\g \otimes \C$ different from $\g$ and the complexified Lie algebra pairs are isomorphic: $(\g\otimes  \C,\h\otimes  \C)\cong (\g^*\otimes \C,\h^*\otimes \C)$.
\end{definition}
First note that the above definition covers the duality of symmetric pairs, with the exception of the self-dual symmetric pair $(\mf{eucl}(\R^n),\so(n))$. We should point out that we are not defining a complete duality for naturally reductive spaces, because it is not a one-to-one correspondence and it is only defined for a very small class of naturally reductive spaces. Also a specific naturally reductive metric does not transfer through the above partial duality.

\begin{corollary}\label{cor:dual compact}
	For every non-compact naturally reductive pair $(\g,\h)$ of type I there exists a partial dual pair $(\g^*,\h^*)$ for which $\g^*$ is compact.
\end{corollary}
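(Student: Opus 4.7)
The plan is to construct $\g^*$ as the compact dual of $\g$ using a Cartan involution of $\g$ that preserves $\h$, generalising the duality between non-compact and compact symmetric pairs exploited in \Cref{lem:noncompact simple is sym}. The essential preliminary is to show that $\h$ is reductive in $\g$, so that Mostow's theorem \cite[Thm.~12.1.4]{Wolf2011} supplies a Cartan involution $\theta$ of $\g$ with $\theta(\h)=\h$. This is where the naturally reductive hypothesis enters decisively: effectiveness embeds $\h$ via $\ad$ into the compact Lie algebra $\so(\m)$, so $\h$ is itself compact and hence reductive; the decomposition $\g=\h\oplus\m$ then realises $\ad_{\g}(\h)$ as a direct sum of the adjoint representation of $\h$ on itself and a subrepresentation of $\so(\m)$, both completely reducible, giving the required reductivity of $\h$ in $\g$.

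Given $\theta$, write $\g=\mf k\oplus\mf p$ for the associated Cartan decomposition and split $\h=\h_{\mf k}\oplus\h_{\mf p}$ with $\h_{\mf k}:=\h\cap\mf k$ and $\h_{\mf p}:=\h\cap\mf p$. Inside $\g\otimes\C$ I set
\[
\g^{*} := \mf k\oplus i\mf p, \qquad \h^{*} := \h_{\mf k}\oplus i\h_{\mf p};
\]
closure of $\h^{*}$ under the bracket follows from the usual Cartan relations $[\mf k,\mf k]\subset\mf k$, $[\mf k,\mf p]\subset\mf p$, $[\mf p,\mf p]\subset\mf k$. The canonical identifications $\g^{*}\otimes\C=\g\otimes\C$ and $\h^{*}\otimes\C=\h\otimes\C$ hold by construction, and since $\g$ is non-compact while $\g^{*}$ is compact these are distinct real forms; hence $(\g^{*},\h^{*})$ fulfils the conditions of \Cref{def:dual pair}. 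Equipping $\g^{*}$ with $\overline{g}^{*}:=-B_{\g^{*}}$ (positive definite, since $\g^{*}$ is compact semisimple) and setting $\m^{*}:=(\h^{*})^{\perp}$ produces a naturally reductive decomposition via the converse half of Kostant's theorem (\Cref{thm:kostant}).

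It remains to show that $\g^{*}$ is the transvection algebra of this new decomposition, which by \Cref{lem:holonomy of space with g simple}\,iii) reduces to effectiveness. Let $\mf l^{*}:=\{h\in\h^{*}:[h,\m^{*}]=0\}$; a short calculation using $[\h^{*},\m^{*}]\subset\m^{*}$ shows $\mf l^{*}$ is an ideal of $\g^{*}$ contained in $\h^{*}$, and its complexification is therefore an ideal of $\g\otimes\C$ contained in $\h\otimes\C$. The maximal ideal of $\g\otimes\C$ contained in $\h\otimes\C$ is stable under the $\g$-conjugation of $\g\otimes\C$ (the sum of such an ideal with its conjugate is again such an ideal, so maximality forces equality with the conjugate), hence descends to a real ideal of $\g$ contained in $\h$; this real ideal annihilates every complement of $\h$ in $\g$, in particular $\m$, and so vanishes by effectiveness of $(\g,\h)$. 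Consequently $\mf l^{*}=0$, completing the proof. The main delicate point in the whole argument is the initial appeal to Mostow's theorem; the remaining work is essentially a matter of transporting the algebraic structure across the complexification and checking that effectiveness survives.
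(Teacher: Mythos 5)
Your argument is correct, but it takes a genuinely different route from the paper. You dualize globally: effectiveness places $\ad(\h)$ inside $\so(\m)$, so $\h$ is of compact type and acts completely reducibly on $\g=\h\oplus\m$ (invariant complements exist because both summands carry $\h$-invariant positive definite forms), whence Mostow's theorem gives a Cartan involution $\theta$ preserving $\h$; you then pass to the compact real form $\mf k\oplus i\mf p$ and transport $\h$ along, checking effectiveness of the dual pair by descending an ideal through the complexification. The paper instead splits $\g$ into simple ideals, applies \Cref{lem:noncompact simple is sym} to each non-compact simple factor $\g_1$ to recognize $(\g_1,i_1(\h))$ as a non-compact symmetric pair with isotropy the full $+1$ eigenspace $\mf k$, and replaces each such factor by its compact dual symmetric pair while leaving the compact ideals and the embedding of $\h$ untouched. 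The two constructions actually agree in substance: a Cartan involution is the identity on every compact simple ideal, so your global dualization only modifies the non-compact factors, exactly as in the paper. What the paper's factor-wise argument buys is the extra structural information that $\h^*$ is the same abstract algebra $\h$ re-embedded and that the non-compact contribution is symmetric, which is what makes the reverse procedure of \Cref{rem:dual spaces type I} transparent; what your argument buys is uniformity (no case analysis over factors) at the price of having to verify effectiveness of the dual decomposition by hand, which you do correctly via the conjugation-stable maximal ideal of $\g\otimes\C$ contained in $\h\otimes\C$, together with \Cref{thm:kostant} and \Cref{lem:holonomy of space with g simple}\,iii).
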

\begin{proof}
	Let $\g = \g_1 \oplus_{L.a.} \g_2$ be a direct sum of ideals with $\g_1$ non-compact and simple and suppose for now that $\g_2$ is compact. Let 
	\[
	i=i_1\oplus i_2:\h\to \g_1 \oplus \g_2
	\]
	denote the inclusion of the isotropy algebra. Note that $\n:=i_1(\h)^\perp \subset\g_1$ is non-trivial and contained in $\m=\h^\perp$ for every $\ad(\g)$-invariant non-degenerate symmetric bilinear form. This implies $(\g_1,i_1(\h))$ defines a naturally reductive pair. 
	From \Cref{lem:noncompact simple is sym} it follows that $(\g_1,i_1(\h))\equiv(\g_1,\mf k)$ is a non-compact symmetric pair, where $\mf k\subset \g_1$ is the $+1$ eigenspace of a Cartan involution. We denote the map $i_1$ with restricted codomain by $\varphi:\h\to \mf k$ and the inclusion of $\mf k$ in $\g_1$ by $j:\mf k\to \g_1$. We have $i_1=j\circ \varphi$. Let $(\g_1^*,\mf k)$ be the dual symmetric pair of $(\g_1,\mf k)$ and $j^*:\mf k\to \g_1^*$ the natural inclusion. Let $\h^*:=((j^*\circ \varphi)\oplus i_2)(\h)\subset \g_1^*\oplus \g_2$. 
	It is clear that $(\g^*,\h^*)$ defines a dual naturally reductive pair with $\g^*$ compact.
	If there is more than one non-compact simple factor in $\g$, then we simply apply the above procedure for every factor. 
\end{proof}

\begin{remark}\label{rem:dual spaces type I} 
The process in the above corollary can also be reversed. Let $\g_1$ be compact semisimple and suppose that $(\g_1,i_1(\h))=(\g_1,\mf k)$ is an irreducible compact symmetric pair. Let $(\g_1^*,\mf k)$ be the dual non-compact symmetric pair. Then just as above we obtain a naturally reductive pair $(\g^*:=\g_1^* \oplus \g_2,\h^*)$.
\end{remark}

From \Cref{lem:reducible iff ideals} and the above corollary we see immediately that a non-compact naturally reductive space of type I is irreducible if and only if its compact dual is irreducible.
Dual pairs are algebraically very similar and it is quite easy to obtain all non-compact naturally reductive decompositions from the compact ones because of \Cref{lem:noncompact simple is sym}.

\subsection{Type II \label{sec:type II}}
Now we deal with non-semisimple transvection algebras of naturally reductive spaces.
\begin{lemma}
\label{lem:general form of abelian ideal}
Let $(\g=\h\oplus \m,g)$ be an effective naturally reductive decomposition. Let $\a\subset \g$ be an abelian ideal. Let $\m_\a:=\a\cap \m$ and let $\m_0$ be the orthogonal complement of $\m_\a$ in $\m$. Let $\a':=(\pi_\m|_\a)^{-1} (\m_0)$, were $\pi_\m$ is the projection onto $\m$ along $\h$ in $\g$. Then the following hold:
\begin{enumerate}[i)]
 \item $[\m_\a,\m]=\{0\}$,
 \item $\g':=\h\oplus \m_0$ is a subalgebra of $\g$ and a naturally reductive decomposition,
 \item $\a'$ is an abelian ideal of $\g'$ and $\g$ and satisfies $\a'\cap \m_0 = \a'\cap \h = \{0\}$. 
\end{enumerate}
\end{lemma}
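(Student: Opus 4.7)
The plan is to prove the three parts in sequence, unlocked by the preliminary observation that effectiveness forces $\a\cap\h=\{0\}$. For $h\in\a\cap\h$, the abelianness of $\a$ gives $[h,\m_\a]\subset[\a,\a]=\{0\}$; for $m_0\in\m_0$, the ideal property together with $[\h,\m]\subset\m$ yields $[h,m_0]\in\a\cap\m=\m_\a$, while $\ad(\h)$-skew-symmetry of $g$ on $\m$ gives $g([h,m_0],m_\a')=-g(m_0,[h,m_\a'])=0$ for every $m_\a'\in\m_\a$, so $[h,m_0]\in\m_\a\cap\m_\a^\perp=\{0\}$. Thus $\ad(h)|_\m=0$, and effectiveness yields $h=0$.

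For part \emph{i)}, this reduces the claim to showing $[m_\a,m]_\m=0$, since then $[m_\a,m]=[m_\a,m]_\h\in\a\cap\h=\{0\}$. Consider the map $\bar A:\m\to\m$ defined by $\bar A(m):=[m_\a,m]_\m$; it is $g$-skew-symmetric by the naturally reductive identity and vanishes on $\m_\a$ by abelianness of $\a$, so $\im\bar A\subset\m_0$. The decisive step is to prove $\bar A^2=0$: since $\ad(m_\a)^2=0$ on $\g$ (because $\a$ is abelian), expanding $0=[m_\a,[m_\a,m]]=[m_\a,[m_\a,m]_\h]+[m_\a,[m_\a,m]_\m]$ and extracting the $\h$-component gives $[m_\a,\bar A(m)]_\h=0$. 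Therefore for $y=\bar A(m)$, the full bracket $[m_\a,y]$ equals its $\m$-component $\bar A(y)$; the ideal property places it in $\a\cap\m=\m_\a$, while $\bar A(y)\in\im\bar A\subset\m_0$, so $\bar A(y)\in\m_\a\cap\m_0=\{0\}$. A skew-symmetric endomorphism of a Euclidean space with vanishing square must itself vanish, hence $\bar A=0$.

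Parts \emph{ii)} and \emph{iii)} then follow quickly. The inclusion $[\h,\m_\a]\subset\m\cap\a=\m_\a$ shows $\m_\a$ is $\h$-invariant, hence so is $\m_0$; applying \Cref{lem:lie algebra fiber bundle basic lemma}(ii) with $\m^+=\m_0$ and $\m^-=\m_\a$, the trivial inclusion $[\m_0,\m_\a]=\{0\}\subset\m_\a$ from \emph{i)} yields $[\m_0,\m_0]_\m\subset\m_0$, so $\g'=\h\oplus\m_0$ is a subalgebra and inherits the naturally reductive identity from $\g$. For \emph{iii)}, the description $\a'=\{a_\h+a_\m\in\a:a_\m\in\m_0\}$ makes $\a'\subset\g'$ immediate, while $\a'\cap\h\subset\a\cap\h=\{0\}$ and $\a'\cap\m_0\subset\a\cap\m_0=\m_\a\cap\m_0=\{0\}$. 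Abelianness is inherited. To verify that $\a'$ is an ideal of $\g$ (hence also of $\g'$, since $\a'\subset\g'$), I would check each summand of $\g=\h\oplus\m_\a\oplus\m_0$ bracketed against $\a'$: brackets with $\h$ stay in $\g'$ by $\h$-invariance of $\m_0$; brackets with $\m_\a$ vanish by part \emph{i)} and abelianness of $\a$; brackets with $\m_0$ stay in $\g'$ by \Cref{lem:lie algebra fiber bundle basic lemma}(ii) together with the naturally reductive identity applied to $[m_0,a_\h]$.

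The main obstacle is the identity $\bar A^2=0$ in part \emph{i)}: it is the only step where abelianness of $\a$ (giving $\ad(m_\a)^2=0$), the ideal property of $\a$ (forcing the iterated bracket into $\m_\a$), and the orthogonal splitting $\m=\m_\a\oplus\m_0$ must be combined in a nontrivial way. Once $\bar A=0$ is secured, the rest reduces to orthogonal-complement bookkeeping.
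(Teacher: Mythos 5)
Your proof is correct, and its overall architecture (part \emph{i)} first, then \emph{ii)} via \Cref{lem:lie algebra fiber bundle basic lemma}.\emph{ii)}, then \emph{iii)} by orthogonal bookkeeping) matches the paper; the genuine difference is in part \emph{i)}. The paper first invokes \Cref{lem:lie algebra fiber bundle basic lemma}.\emph{i)} --- i.e.\ the symmetry of $R$ on $\Lambda^2\m$ together with effectiveness --- to get $[\m_\a,\m_0]\subset\m$, and then finishes in one line: $g([m,n],[m,n])=-g(n,[m,[m,n]]_\m)=0$ because $\a$ is abelian. You instead establish $\h\cap\a=\{0\}$ up front (a fact the paper only proves later, inside part \emph{iii)}), and then run the operator argument: $\bar A=\pi_{\m}\circ\ad(m_\a)|_{\m}$ is skew-symmetric, abelianness gives $\ad(m_\a)^2=0$, and the ideal property plus $\im\bar A\subset\m_0$ force $\bar A^2=0$, hence $\bar A=0$. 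The two routes are close in spirit --- the paper's final squeeze is exactly the ``skew-symmetric with square zero must vanish'' phenomenon you isolate --- but yours avoids the curvature-symmetry lemma in part \emph{i)} at the cost of the extra nilpotency bookkeeping, while the paper's is shorter because the $\h$-component is removed at the outset. Two cosmetic points on your part \emph{iii)}: since $\a'=\g'\cap\a$ and $[\g,\a']\subset\a$ is automatic ($\a$ is an ideal), checking that brackets land in $\g'$ does suffice, but this should be said explicitly --- the paper packages it as $[\g',\a']\subset\g'\cap\a=\a'$; and $[n,a_\h]\in\m_0$ follows immediately from the $\h$-invariance of $\m_0$ you already established, so no appeal to the naturally reductive identity is needed there.
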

\begin{proof}
\emph{i)} Since the decomposition $\g=\h\oplus \m$ is reductive and $\a$ is an ideal we have
\[
[\h, \m_{\a}]\subset \a \cap \m = \m_{\a}.
\]
Hence $\m_{\a}$ and its orthogonal complement $\m_0$ are $\h$-invariant. Since $\a $ is abelian we have $[\m_{\a} ,\m_{\a}]=\{0\}$.
Let $m\in \m_{\a}$ and $n\in \m_0$. Then we can apply \Cref{lem:lie algebra fiber bundle basic lemma}.\emph{i)} to see that $[m,n]\subset \m$. Combining this with $\a$ being an ideal gives us $[m,n]\in \a\cap \m=\m_{\a}$.
We obtain $g([m,n],[m,n])=-g(n,[m,[m,n]])=0$ and thus $[m,n]=0$. We conclude $[\m,\m_{\a}]=\{0\}$.

\emph{ii)} We already know that $[\h,\m_0]\subset \m_0$. We just saw that $[\m_0,\m_\a]=\{0\} \subset \m_\a$. \Cref{lem:lie algebra fiber bundle basic lemma}.\emph{ii)}  now implies $[\m_0,\m_0]_\m\subset \m_0$. Consequently, $\g'=\h\oplus \m_0$ is a subalgebra and defines a naturally reductive decomposition with respect to the metric $g|_{\m_0\times \m_0}$.

\emph{iii)} We know that $\a'\subset \g'$ and by \emph{ii)} $\g'\subset \g$ is a subalgebra. Hence $[\g',\a']\subset \g'\cap \a = \a'$. This means $\a'$ is an abelian ideal in $\g'$. Clearly $\a'$ is still an abelian ideal in $\g$. Note that $\a'\cap \m_0 \subset \m_\a\cap \m_\a^\perp =\{0\}$. Suppose that $h\in \h\cap \a$. Then for every $n\in \m_0$ we have $[h,n]\in \m_0 \cap \a = \{0\}$.  
If $m\in \m_\a$, then $[h,m]=0$ holds because both $h$ and $m$ are in $\a$ and $\a$ is abelian. By assumption the map $\ad: \h\to \so(\m)$ has trivial kernel. Since $\h\cap \a$ is contained in the kernel we conclude that $\h\cap \a=\{0\}$. In particular $\h\cap \a'=\{0\}$.
\end{proof}

From \Cref{lem:general form of abelian ideal} and \Cref{thm:product iff torsion prod} we immediately obtain that any abelian ideal $\a$ of an irreducible effective naturally reductive decomposition $\h\oplus \m$ satisfies $\a\cap \m = \a\cap \h =\{0\}$. In other words $\m_\a = \{0\}$ if $\h\oplus \m$ is irreducible.

\begin{definition}\label{def:decomposition for abelian ideal}
Let $\g =\h \oplus \m$ be an effective naturally reductive decomposition. Let $\a=\a' \oplus \m_\a$ be as in Lemma \ref{lem:general form of abelian ideal}. Let $\m^+:=\pi_\m(\a')\subset \m_0$, where $\pi_\m:\g\to \m$ is the projection along $\h$ and $\m_0$ is the orthogonal complement of $\m_\a$ in $\m$. Let $\m^-$ be the orthogonal complement of $\m^+$ inside $\m_0$. Furthermore, let $\h^+:=\pi_{\h}(\a')$, where $\pi_\h:\g \to \h$ is the projection along $\m$. Note that $\h^+$ is an ideal in $\h$ because $\pi_\h$ is $\h$-equivariant and $\a$ is an ideal. Let $\h^-$ be a complementary ideal in $\h$, which exists because $\h$ is a reductive Lie algebra. It will be irrelevant which complement we pick. This gives us the following decomposition:
\[
\g = \h^+\oplus \h^- \oplus \m^+ \oplus \m^- \oplus \m_{\a}.
\]
We call this the \emph{fiber decomposition with respect to} $\a$.
\end{definition}

\begin{lemma}\label{lem:brackets m+ m-}
Let the notation be as in Definition {\rm\ref{def:decomposition for abelian ideal}}. Then the following hold:
\begin{enumerate}[i)]
 \item the decomposition $\m=\m^+\oplus\m^-\oplus \m_\a$ is $\h$-invariant,
 \item $[\m^+,\m^+]_\m \subset \m^+$,
 \item $[\h^-,\m^+]=\{0\}$ and $[\h^-,\h^+]=\{0\}$,
 \item $[\a,\m^-\oplus \m_\a]=\{0\}$.
\end{enumerate}
\end{lemma}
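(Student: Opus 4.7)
The plan is to treat the four parts in the order i), iii), ii), iv), since each builds on the previous. The overall strategy exploits two distinct properties of $\a$: that it is an ideal (controlling $\h$-equivariance and absorbing brackets into itself) and that it is abelian (forcing certain brackets to vanish), combined with the naturally reductive orthogonality identity and \Cref{lem:lie algebra fiber bundle basic lemma}.

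For part i), I would observe that $\m_\a$ is $\h$-invariant since $[\h,\m_\a]\subset \a\cap\m=\m_\a$, so $\m_0$, being its orthogonal complement in $\m$, is also $\h$-invariant. The subspace $\a'=\a\cap(\h\oplus\m_0)$ is then visibly $\h$-invariant, whence $\m^+=\pi_\m(\a')$ is $\h$-invariant by $\h$-equivariance of $\pi_\m$; the $\h$-invariance of $\m^-$ follows as an orthogonal complement in $\m_0$. Part iii) is then short: $[\h^-,\h^+]\subset \h^-\cap\h^+=\{0\}$ because both are ideals of $\h$, and for any $m^+\in\m^+$ one chooses $h^+\in\h^+$ with $h^++m^+\in\a'$; then $[h^-,h^++m^+]=[h^-,m^+]$ lies in $\a$ and in $\m^+$ (by i)), hence in $\m^+\cap\m_\a=\{0\}$.

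The heart of the proof is parts ii) and iv), which I would handle in tandem. For $m^+\in\m^+$ and $m^-\in\m^-$, applying \Cref{lem:lie algebra fiber bundle basic lemma}.i) to the orthogonal $\h$-invariant splitting $\m=\m^+\oplus(\m^-\oplus\m_\a)$, combined with $[\m^+,\m_\a]=\{0\}$ from \Cref{lem:general form of abelian ideal}.i), yields $[\m^+,\m^-]_\h=0$. Picking $h^++m^+\in\a'$, one then obtains $[h^++m^+,m^-]=[h^+,m^-]+[m^+,m^-]_\m\in\m\cap\a=\m_\a$; projecting onto $\m^+$ and using $[h^+,m^-]\in\m^-$ (by i)) shows that the $\m^+$-component of $[m^+,m^-]_\m$ vanishes. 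Together with $[\m^+,\m_\a]=\{0\}$, the naturally reductive identity $g([m_1^+,m_2^+]_\m,z)=-g(m_2^+,[m_1^+,z]_\m)$ applied to $z\in\m^-$ and to $z\in\m_\a$ then gives $[\m^+,\m^+]_\m\perp(\m^-\oplus\m_\a)$, proving ii).

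With ii) in hand, \Cref{lem:lie algebra fiber bundle basic lemma}.ii) upgrades the situation to $[\m^+,\m^-\oplus\m_\a]\subset\m^-\oplus\m_\a$, and a second application of the orthogonality identity rules out the $\m_\a$-component, leaving $[\m^+,\m^-]_\m\subset\m^-$. Hence $[h^++m^+,m^-]\in\m^-\cap\a\subset\m_0\cap\m_\a=\{0\}$, which gives $[\a',\m^-]=\{0\}$; the remaining cases $[\m_\a,\m^-\oplus\m_\a]=\{0\}$ (from \Cref{lem:general form of abelian ideal}.i) and abelianness of $\a$) and $[\a',\m_\a]\subset[\a,\a]=\{0\}$ are immediate, completing iv). I expect the main subtlety to be the bootstrap in ii) and iv): one must extract orthogonality of brackets from the ideal property in the correct sequence, alternating between \Cref{lem:lie algebra fiber bundle basic lemma} and the naturally reductive identity, and use ii) to push through the sharper conclusion needed in iv).
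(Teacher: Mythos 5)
Your proof is correct, but your treatment of parts \emph{ii)} and \emph{iv)} takes a different route than the paper's. For \emph{ii)} the paper uses the abelianness of $\a$ head-on: it expands $0=[h+m,h'+m']$ for two elements of $\a'$ and reads off the $\m$-component, obtaining $[m,m']_\m=-[h,m']+[h',m]\in\m^+$ in one line (this computation is also what later yields the identity \eqref{eq:formula for rho} for $\rho$). You instead never use $[\a',\a']=\{0\}$ in \emph{ii)}: you first get $[\m^+,\m^-]\subset\m^-\oplus\m_\a$ from the ideal property of $\a$ together with \Cref{lem:lie algebra fiber bundle basic lemma}.\emph{i)}, and then recover $[\m^+,\m^+]_\m\subset\m^+$ from the naturally reductive identity and $[\m,\m_\a]=\{0\}$ — a valid bootstrap which shows \emph{ii)} needs only that $\a$ is an ideal (abelianness enters only through \Cref{lem:general form of abelian ideal}.\emph{i)}). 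The paper's argument is shorter and produces the explicit bracket formula it needs later; yours buys a slightly weaker hypothesis and, in \emph{iv)}, makes explicit a step the paper leaves terse, namely why $[\m^+,\m^-]_\m$ has no $\m_\a$-component (you rule it out with a second application of the orthogonality identity, whereas the paper simply labels $[m,m^-]\in\m^-$ under the brace). Parts \emph{i)} and \emph{iii)} are essentially the paper's argument, phrased via $\h$-invariance of $\a'$ and $\h$-equivariance of $\pi_\m$ rather than element-wise.
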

\begin{proof}
 \emph{i)} From Lemma \ref{lem:general form of abelian ideal} we know that $\m_\a$ and $\m_0$ are $\h$-invariant. Let $m\in\m^+$ and pick $h\in \h^+$ such that $h+m\in \a'$. Then by \Cref{lem:general form of abelian ideal}.\emph{iii)} we have for every $k\in\h$ the following
\[
\a'\ni[k,h+m]=\underbrace{[k,h]}_{\in\h}+\underbrace{[k,m]}_{\in\m_0}.
\]
Hence $[k,m]\in\m^+$ and thus $\m^+$ is $\h$-invariant. The orthogonal complement $\m^-$ in $\m_0$ is automatically also $\h$-invariant. 

\emph{ii)} Let $m'\in\m^+$ and pick $h'\in \h^+$ such that $h'+m'\in \a$. Then we have
\[
0=[h+m,h'+m']=\underbrace{[h,h']}_{\in\h}+\underbrace{[h,m']}_{\in\m^+}-\underbrace{[h',m]}_{\in\m^+}+[m,m'].
\]
This implies that $[m,m']_{\m}\in\m^+$. 

\emph{iii)} Because $\h^-$ and $\h^+$ are both ideals in $\h$ we get $[\h^-,\h^+]=\{0\}$. Let $h^-\in \h^-$. Then
\[
\a'\ni [h+m,h^-]=[h,h^-]+[m,h^-]=[m,h^-]\in\m^+.
\]
Combining this with $\a'\cap \m^+=\{0\}$ we obtain $[\h^-,\m^+]=\{0\}$.

\emph{iv)} Let $m^-\in \m^-$. Then
\[
\a\ni [h + m, m^-]=\underbrace{[h,m^-]}_{\m^-}+\underbrace{[m,m^-]}_{\m^-}
\]
implies that $[\a,\m^-]\subset \a\cap \m^- = \{0\}$. Since $\a$ is abelian it follows that $[\a,\m_\a]=\{0\}$. 
\end{proof}

In the following we assume we have an abelian ideal $\a\subset \g$ with $\a\cap \m = \a\cap \h=\{0\}$. We let 
\[
\rho:\h^+ \to \m^+
\]
be the linear map defined by the graph $\a \subset \h^+\oplus \m^+$. Let $k\in\h^+$ and $h+m\in\a$. Then
\[
\a\ni[k,h+m]=[k,h]+[k,m].
\]
This implies that $\rho([k,h])=[k,m]=[k,\rho(h)]$, i.e. the linear map $\rho :\h^+\to \m^+$ is an isomorphism of $\h^{+}$-modules. Let $h + m , ~h'+m'\in \a$. Then we have 
\[
0= [h+m,h'+m'] = [h,h'] +[h,m'] + [m,h'] + [m,m'],
\]
or equivalently
\begin{equation}
[h,h']  = -[m,m']_{\h}\qquad\mbox{and}\qquad [m,m']_{\m} = [h',m]+[m',h]=-2\rho([h,h']) \label{eq:formula for rho}.
\end{equation}

\begin{remark}\label{rem:submodule in m^+}
Rewriting \eqref{eq:formula for rho} we get $[m,m']_\m = -2 [h,m']$. This implies that if $\mf v\subset \m^+$ is $\h^+$-invariant, i.e. a submodule, then also $[\mf v,\mf v]_\m \subset \mf v$. 
\end{remark}

Let $\g=\h\oplus \m$ be an effective naturally reductive decomposition which has a non-trivial abelian ideal. If we combine \Cref{lem:lie algebra fiber bundle basic lemma} with \Cref{lem:brackets m+ m-}, then we obtain an infinitesimal fiber bundle, in the sense of \Cref{def:Lie algebra fiber bundle}, for every abelian ideal $\a\subset \g$.

\begin{definition}\label{def:associated base space}
Let $(\g = \h\oplus \m,g)$ be an effective naturally reductive decomposition with a non-trivial abelian ideal $\a\subset \g$ and with infinitesimal model $(T,R)$. Let $\g=\h^+\oplus \h^-\oplus \m^+\oplus \m^-\oplus \m_\a$ be the fiber decomposition with respect to $\a$, see \Cref{def:decomposition for abelian ideal}. Let $\mf e :=\h\oplus \m^+\oplus\m_\a$. \emph{The base space associated to} $\a$ is given by the naturally reductive decomposition
\[
(\mf e\oplus \m^-,g|_{\m^-\times\m^-}),
\]
where $\mf e$ is the isotropy algebra. We will denote the infinitesimal model of the base space, defined by \eqref{eq:torsion from lie bracket} and \eqref{eq:curvature from lie bracket}, by $(T_0,R_0)$.
\end{definition}

\begin{notation}\label{not:phi and psi}
Let $B=\rho^*g|_{\m^+\times \m^+}$ be the pullback metric on $\h^+$. This metric is $\ad(\h^+)$-invariant. We define a 3-form $T_{\h^+}$ on $\h^+$ by $T(h_1,h_2,h_3) := B([h_1,h_2],h_3)$. We define $T_{\m^+}:=\rho(T_{\h^+})$, where $\rho$ is the natural extension $\rho: \Lambda^3\h^+\to\Lambda^3\m^+$. Let 
\[
\varphi:\h^+ \to \so(\m^-)\qquad\mbox{and}\qquad\psi:\h^+ \to \so(\m^+\oplus \m^-)
\]
denote the restricted adjoint representations in $\g$. 
\end{notation}

Note that $T_0$ is invariant under $\varphi(\h^+)$. We now derive a formula for the torsion and curvature of a naturally reductive decomposition $\g=\h\oplus \m$ in terms of $(T_0,R_0)$ and the representations $\varphi$ and $\psi$.

\begin{proposition}
\label{prop:torsion and curvature of type II}
Let $\g = \h\oplus \m$ be an irreducible effective naturally reductive decomposition. Let $\g=\h^+\oplus \h^-\oplus \m^+\oplus \m^-$ be the fiber bundle decomposition associated with an abelian ideal $\a\subset \g$. Its torsion and curvature are given by
\[
T=T_{0}+\sum_{i=1}^{l}\varphi(h_{i})\wedge m_{i}+2 T_{\m^+}\qquad\mbox{and}\qquad  R=R_{0}+\sum_{i=1}^l \psi(h_i)\odot \psi(h_i),
\]
respectively, where $m_1,\dots,m_l$ is an orthonormal basis of $\m^+$ and $h_i:=\rho^{-1}(m_i)$.
\end{proposition}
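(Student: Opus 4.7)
The plan is to verify both formulas case-by-case after splitting arguments along the decomposition $\m = \m^+ \oplus \m^-$; here the irreducibility hypothesis forces $\m_\a = \{0\}$ via the observation following \Cref{lem:general form of abelian ideal}, so that the base space has isotropy algebra $\mf e = \h \oplus \m^+$. The main tools are the graph description of $\a$ via $\rho$, whose consequences are encoded in \eqref{eq:formula for rho}, and the identity $[\a,\m^-]=\{0\}$ from \Cref{lem:brackets m+ m-}.iv), which yields in particular $[m_i,n] = -[h_i,n]$ for every $n \in \m^-$.

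For the torsion I treat the three combinatorial cases. When both arguments lie in $\m^-$, the base-space torsion $T_0$ accounts for the $\m^-$-component of $-[n,n']_\m$; the $\m^+$-component is obtained by expanding in $\{m_i\}$ and using the naturally reductive condition together with $[m_i,n] = -[h_i,n]$, which gives $g([n,n'],m_i) = -\varphi(h_i)(n,n')$, matching the relevant contraction of $\sum_i \varphi(h_i) \wedge m_i$. The mixed case $u \in \m^+$, $n \in \m^-$ is immediate from $[u,n] = -[\rho^{-1}(u),n] = -\varphi(\rho^{-1}(u))(n)$. The pure $\m^+$ case uses the second half of \eqref{eq:formula for rho} to get $[u,v]_\m = -2\rho([h_u,h_v])$, which matches $2T_{\m^+}$ by the definition of $T_{\m^+}$ and $B = \rho^{*}g$.

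For the curvature, the case $(u,v) \in \m^+ \times \m^+$ starts from $[u,v]_\h = -[h_u,h_v] \in \h^+$ from \eqref{eq:formula for rho}; using $\ad(\h^+)$-invariance of $B$ and orthonormality of $\{h_i\}$, the sum $\sum_i B([h_i,h_u],h_v)h_i$ collapses to $[h_u,h_v]$, matching $-\ad([u,v]_\h)$. The case $(u,n) \in \m^+ \times \m^-$ is trivial since $[u,n] \in \m^-$ has no $\h$-component, hence $R(u,n) = 0$, and dually $\psi(h_i)(u,n) = 0$.

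The main obstacle is the case $(n,n') \in \m^- \times \m^-$. On $\m^-$-test vectors $w$, the difference $R(n,n')(w) - R_0(n,n')(w)$ equals $[[n,n']_{\m^+},w]$, which matches the right-hand side after substituting $[n,n']_{\m^+} = -\sum_i \psi(h_i)(n,n')m_i$ (computed as in the torsion case) and $[m_i,w] = -\psi(h_i)(w)$. On $\m^+$-test vectors $w$ the required identity is $-[[n,n']_{\h^+},w] = \sum_i \psi(h_i)(n,n')[h_i,w]$, equivalently that $X := [n,n']_{\h^+} + \sum_i \psi(h_i)(n,n')h_i$ annihilates $\m^+$ under $\ad$. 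To prove this, I apply Jacobi to $[\xi,[n,n']] = 0$ for $\xi = h+\rho(h) \in \a$ (which holds by $[\a,\m^-]=\{0\}$ combined with Jacobi) and project onto $\h^+$; the surviving terms reduce to $[h,X] = 0$ for all $h \in \h^+$, so $X \in Z(\h^+)$, and $\rho$-equivariance of $\ad(\h^+)$ on $\m^+$ gives $[X,w] = 0$, completing the identification.
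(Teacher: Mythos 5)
Your proposal is correct, and for the torsion and for the curvature components with both arguments in $\m^-$ (tested against $\m^-$) or both in $\m^+$ it follows essentially the same computation as the paper: split $T$ and $R$ along $\m=\m^+\oplus\m^-$, use $[\a,\m^-]=\{0\}$ from \Cref{lem:brackets m+ m-} to get $[m_i,n]=-\psi(h_i)n$, and use \eqref{eq:formula for rho} for the pure $\m^+$ brackets. The one place where you genuinely diverge is the remaining curvature components, i.e.\ $R(n,n')$ for $n,n'\in\m^-$ evaluated on $\m^+$ (and the mixed first pair): the paper disposes of these in one line by invoking the pair symmetry $R(x,y,u,v)=R(u,v,x,y)$, which for a naturally reductive decomposition follows from the invariance of the Kostant form of \Cref{thm:kostant}, so that these components are already determined by the two cases computed directly. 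You instead verify the identity by hand: applying the Jacobi identity to $[\xi,[n,n']]=0$ for $\xi=h+\rho(h)\in\a$ and projecting to $\h$, you show that $X=[n,n']_{\h^+}+\sum_i\psi(h_i)(n,n')h_i$ is centralized by $\h^+$, and then the $\rho$-equivariance of the $\h^+$-action transfers $[X,\h^+]=0$ to $[X,\m^+]=0$ (one also needs $[\h^-,\m^+]=\{0\}$ from \Cref{lem:brackets m+ m-} to drop the $\h^-$-part, which you implicitly do). I checked this argument and it is sound. The trade-off is clear: your route is longer but entirely self-contained at the Lie-algebra level and does not presuppose the curvature symmetry; the paper's route is shorter but leans on a structural fact about naturally reductive curvature tensors. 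Either way the stated formulas follow, so there is no gap.
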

\begin{proof}
We know by Lemma \ref{lem:brackets m+ m-} that $[\m^+,\m^+]_\m\subset \m^+$. Thus Lemma \ref{lem:lie algebra fiber bundle basic lemma} implies that $[\m^+,\m^-]\subset \m^-$. These two inclusions tell us that
\[
T\in\Lambda^{3}\m^{+}\oplus\Lambda^{2}\m^{-}\otimes\m^{+}\oplus\Lambda^{3}\m^{-}.
\]
The component in $\Lambda^{3}\m^{-}$ is exactly $T_0$ by the definition of $T_0$. Let $h+m\in \a$. Then by Lemma \ref{lem:brackets m+ m-}. \emph{iv)} we have 
\[
0=[h+m,n] = [h,n] + [m,n] = \varphi(h)(n) + [m,n],
\]
for every $n\in \m^-$. This means that $ T(m,n) = - [m,n] = \varphi(h)n$. This proves that the summand in $\Lambda^2 \m^- \otimes \m^+$ is given by $\sum_{i=1}^{l}\varphi(h_{i})\wedge m_{i}$. From \eqref{eq:formula for rho} we know that $[m,m']=-2\rho([h,h'])$. This shows that the summand in $\Lambda^3\m^+$ is given by $2\rho(T_{\h^+})=2T_{\m^+}$.

The curvature of the base space is by definition given by 
\[
R_0(x,y)= - \ad([x,y]_{\mf e }) \in \so(\m^-),\quad \forall x,y\in \m^-.
\]
Let $x,y,u,v\in \m^-$. Then we have
\begin{align*}
R(x,y,u,v) &= g(R(x,y)u,v)=-g([[x,y]_{\h},u],v) \\
           &= -g([[x,y]_{\mf e } - [x,y]_{\m^+},u],v)   \\
           &= R_0(x,y,u,v)+g([[x,y]_{\m^+},u],v)\\
           &= R_0(x,y,u,v)+\sum_{i=1}^l -g(g(\psi(h_i)x,y)[m_i,u],v)\\
           &= R_0(x,y,u,v)+\sum_{i=1}^l g(\psi(h_i)x,y)g(\psi(h_i)u,v)\\
           &= (R_0+\sum_{i=1}^l \psi(h_i) \odot \psi(h_i) )(x,y,u,v).
\end{align*}
Let $x,y\in \m^+$ and $u,v\in \m$.  From \eqref{eq:formula for rho} it follows that
\begin{align*}
[x,y]_\m = \sum_{i=1}^l g([x,y],m_i) m_i = \sum_{i=1}^l g([m_i,x],y)m_i =-2\sum_{i=1}^l g([h_i,x],y)m_i,
\end{align*}
and $[x,y]_\h = \frac{1}{2}\rho^{-1}([x,y]_\m)$. Combining these gives
\[
[x,y]_\h = -\sum_{i=1}^l g([h_i,x],y)h_i.
\]
Consequently,
\[
R(x,y,u,v) = -g([x,y]_{\h}u,v)= \sum_{i=1}^l (\psi(h_i)\odot \psi(h_i))(x,y,u,v).			
\]
From the symmetries of the curvature tensor $R$ we conclude that 
\[
R=R_{0}+\sum_{i=1}^{l}\psi(h_i)\odot \psi(h_i).\qedhere
\]
\end{proof}

In the following lemma we will prove that every effective naturally reductive decomposition admits a maximal abelian ideal. This result will be very useful for the main theorem of this section.

\begin{lemma}\label{lem:max abelian ideal}
Let $\g = \h \oplus \m $ be an effective naturally reductive decomposition. The sum over all abelian ideals inside $\g$ is again an abelian ideal in $\g$. In other words there always exists a maximal abelian ideal. Every derivation of $\g$ preserves the maximal abelian ideal.
\end{lemma}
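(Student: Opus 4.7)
The lemma has three assertions: (a) the sum of any two abelian ideals $\a_1, \a_2 \subset \g$ is again an abelian ideal; (b) a unique maximal abelian ideal $\mf A$ exists; (c) $\mf A$ is preserved by every derivation of $\g$. Parts (b) and (c) will follow from (a) by standard arguments, so the essential content is (a).

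For (a), my plan is to apply \Cref{lem:general form of abelian ideal} to each $\a_i$, decomposing it as $\a_i = \m_{\a_i} \oplus \a_i'$, where $\m_{\a_i} = \a_i \cap \m$ satisfies $[\m_{\a_i}, \m] = \{0\}$ and $\a_i'$ is the graph of an $\h$-equivariant linear map $\rho_i \colon \h_i^+ \to \m_i^+$ between an ideal $\h_i^+$ of $\h$ and an $\h$-submodule $\m_i^+$ of $\m$. A preliminary observation, used throughout, is that effectivity forces $\a \cap \h = \{0\}$ for any abelian ideal $\a$: an $h \in \a \cap \h$ commutes with $\m_\a$ by abelianness of $\a$, and $[h, \m_\a^\perp] \subset \m_\a^\perp \cap \a = \{0\}$ (using $\h$-invariance of $\m_\a^\perp$ together with $\a$ being an ideal), so $\ad(h) = 0$ on $\m$ and hence $h = 0$. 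I would then expand $[\a_1, \a_2]$ into four cross-brackets. The term $[\m_{\a_1}, \m_{\a_2}]$ vanishes directly from $[\m_{\a_j}, \m] = \{0\}$. For a mixed bracket $[m, h + \rho_2(h)]$ with $m \in \m_{\a_1}$ and $h \in \h_2^+$, the $\rho_2$-part is annihilated by $[\m_{\a_1}, \m] = \{0\}$; the residual $[m, h]$ lies in the abelian ideal $\a_1 \cap \a_2$, and one shows $\ad(h)^2 m = 0$ by exploiting abelianness of $\a_2$ together with the fact that $[m, h]$ itself lies in $\m_{\a_1}$ and so commutes with all of $\m$, upon which skew-symmetry of $\ad(h)|_\m$ upgrades the identity to $\ad(h) m = 0$. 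The main obstacle is the graph-vs-graph bracket $[\a_1', \a_2']$: here I would combine the $\h$-equivariance of both $\rho_1$ and $\rho_2$ with \eqref{eq:formula for rho} to rewrite $Z := [X_1, X_2]$ with explicit $\h$- and $\m$-components, note that $Z \in \a_1' \cap \a_2'$ forces $\rho_1(\pi_\h(Z)) = \rho_2(\pi_\h(Z)) = \pi_\m(Z)$, and extract from this identity together with skew-symmetry and effectivity enough constraints to conclude $Z = 0$.

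With (a) in hand, part (b) is immediate: the sum $\mf A := \sum_\a \a$ over all abelian ideals of $\g$ is an abelian ideal by (a) and finite-dimensionality, and it is maximal by construction (hence unique). For (c), uniqueness of $\mf A$ implies $\phi(\mf A) = \mf A$ for every $\phi \in \Aut(\g)$, since $\phi$ sends the unique maximal abelian ideal to itself; as $\Der(\g) = \Lie(\Aut(\g))$, differentiating the one-parameter subgroup $e^{tD} \subset \Aut(\g)$ at $t = 0$ gives $D(\mf A) \subset \mf A$ for every $D \in \Der(\g)$.
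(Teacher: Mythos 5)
Your overall architecture (reduce to pairwise sums, then maximality plus the automorphism argument for derivations) matches the paper, and several pieces of your part (a) are sound: the vanishing of $[\m_{\a_1},\m_{\a_2}]$, and your mixed-bracket argument does go through (since $[h,m]$ agrees up to sign with $[h+\rho_2(h),m]\in\a_2$ and lies in $\m_{\a_1}$, one gets $\ad(h)^2m\in[\a_2,\a_2]=\{0\}$ and then skew-symmetry of $\ad(h)|_\m$ kills $\ad(h)m$). Parts (b) and (c) are standard and are argued exactly as in the paper. The genuine gap is at what you yourself call the main obstacle, the graph-versus-graph bracket, and what you offer there is not an argument. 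The relation $\rho_1(\pi_\h(Z))=\rho_2(\pi_\h(Z))=\pi_\m(Z)$ is nothing more than a restatement of $Z\in\a_1'\cap\a_2'$: every element of that intersection satisfies it, and the intersection is in general nonzero, so this identity together with skew-symmetry and effectivity cannot by itself force $Z=0$. One has to use that $Z$ is a bracket $[X_1,X_2]$, and if you actually run the computation with the tools you name ($\h$-equivariance of $\rho_1,\rho_2$, \eqref{eq:formula for rho} applied to the pairs $(X_i,Z)$ inside each $\a_i$, natural reductivity, skew-symmetry of $\ad(\h)|_\m$), the resulting inner-product identities only reproduce one another and never determine the sign of $\|\pi_\m(Z)\|^2$; I see no way to close the argument along these lines. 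This is precisely the nontrivial content of the lemma: in a general Lie algebra the sum of two abelian ideals need not be abelian, so some genuinely new structural input is required at exactly this point.

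The paper never proves $[\a_1',\a_2']=\{0\}$ head-on. For a pair of abelian ideals $\a_i,\a_j$ it sets $\a_{ij}:=\a_i\cap\a_j$, takes the orthogonal complement $\mf v_i$ of $\pi_\m(\a_{ij})$ inside $\pi_\m(\a_i)=\m_i^+\oplus\m_{\a_i}$, checks that both subspaces are $\h$-invariant, and uses \Cref{rem:submodule in m^+} together with \Cref{lem:lie algebra fiber bundle basic lemma} to obtain $[\mf v_i,\pi_\m(\a_{ij})]=\{0\}$; combined with \Cref{lem:brackets m+ m-} this shows that $\mf o_i:=(\pi_\m|_{\a_i})^{-1}(\mf v_i)$ is an abelian ideal of $\g$ with $\a_i=\mf o_i\oplus\a_{ij}$ and $\mf o_i\cap\a_j=\{0\}$. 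Since $\mf o_i$ and $\a_j$ are both ideals, $[\mf o_i,\a_j]\subset\mf o_i\cap\a_j=\{0\}$, hence $\a_i+\a_j=\mf o_i\oplus\a_j$ is an abelian ideal; the commutation of the two graphs is a corollary of this splitting, not its starting point. To repair your proof you would need to supply an idea of this kind (or an equivalent substitute) for the third cross-bracket; as written, the crucial case is asserted rather than proved.
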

\begin{proof}
Let $\a:= +_{i} \a_i$ be the sum of all abelian ideals $\a_i$ in $\g$. Then $\a\subset \g$ is an ideal. We have to show for all $x,y\in \a$ that $[x,y]= \sum_{i,j} [x_i,y_j]= 0$, where $x=\sum_i x_i$, $y=\sum_i y_i$, and $x_i,y_i\in \a_i$. 
In other words the sum of two abelian ideals $\a_i$ and $\a_j$ is an abelian ideal in $\g$. It is clear that $\a_i + \a_j$ is an ideal and that $[\a_i,\a_j] \subset \a_i\cap \a_j$. This means that if $\a_{ij}:=\a_i\cap \a_j$ is equal to $\{0\}$, then $\a_i+\a_j$ is also abelian.

Let $\g = \h_i^+\oplus \h_i^-\oplus \m_i^+ \oplus \m_i^-\oplus \m_{\a_i}$ be the fiber decomposition of $\g$ with respect to $\a_i$, see \Cref{def:decomposition for abelian ideal}. The intersection $\a_{ij}=\a_i\cap \a_j$ is an abelian ideal of $\g$ and $\a_{ij}\subset \a_i$. Let $\m^+_{ij}$ be the projection of $\a_{ij}$ onto $\m$. Just as in \Cref{lem:brackets m+ m-}, it follows that $\m^+_{ij}\subset \m_i^+\oplus \m_{\a_i}$ is $\h$-invariant. Let $\mf v_i$ be the orthogonal complement of $\m^+_{ij}$ in $\m^+_i\oplus \m_{\a_i}$. Then $\mf v_i$ is also $\h$-invariant. \Cref{rem:submodule in m^+} implies $[\mf v_i,\mf v_i]_\m\subset \mf v_i$ and $[\m_{ij}^+,\m_{ij}^+]_\m\subset \m_{ij}^+$. Therefore, \Cref{lem:lie algebra fiber bundle basic lemma}.\emph{ii)} implies that $[\mf v_i, \m^+_{ij}]\subset \mf v_i \cap \m_{ij} =\{0\}$. Let $\mf o_i := (\pi_\m|_{\a_i})^{-1}(\mf v_i)$, where $\pi_\m:\g \to \m$ is the projection along $\h$. Then $\mf o_i\subset \a_i$ and thus \Cref{lem:brackets m+ m-} implies that $[\mf o_i, \m_i^-\oplus \m_{\a_i}]=\{0\}$. Since $\mf v_i$ is $\h$-invariant and $\pi_\m$ is $\h$-equivariant we see that also $\mf o_i$ is $\h$-invariant, i.e. $[\h,\mf o_i]\subset \mf o_i$. Finally, we have $[\mf o_i,\a_i] =\{0\}$. In total this tells us that $[\g,\mf o_i] = [\h \oplus \a_i \oplus \m_i^-\oplus \m_{\a_i}, \mf o_i ] \subset \mf o_i$ and thus that $\mf o_i$ is an ideal in $\g$. Moreover $\mf o_i$ is abelian because $\mf o_i\subset \a_i$. We have $\a_i = \mf o_i \oplus \a_{ij}$. By construction we have $\mf o_i\cap \a_j=\{0\}$. This implies that $\mf o_i \oplus \a_j$ is again an abelian ideal. Since $\a_{ij}\subset \a_j$ we obtain
\[
\a_i + \a_j = (\mf o_i \oplus \a_{ij})+\a_j = \mf o_i \oplus \a_j.
\]
We conclude that $\a_i+\a_j$ is an abelian ideal and thus also $\a = +_i \a_i$ is an abelian ideal. Moreover, $\a$ is maximal in the sense that it contains all other abelian ideals.

The maximal abelian ideal of $\g$ is the sum over all abelian ideals. The image of an abelian ideal under an automorphism is an abelian ideal. Therefore, we see that any automorphism preserves the maximal abelian ideal. This implies that also all derivations preserve the maximal abelian ideal.
\end{proof}

\begin{lemma}
\label{lem:hol rep on m^-}
Let $(\g=\h^+\oplus \h^- \oplus \m^+\oplus \m^-,g)$ be an effective naturally reductive decomposition for some abelian ideal $\a\subset \g$ with $\a\cap \m=\{0\}$. Let $\l:= \ker(\varphi)$ and $\l^\perp$ the orthogonal complement in $\h^+$ with respect to $\rho^*g$. Then we have the following decomposition of ideals
\[
\g=(\l \oplus \rho(\l))\oplus_{L.a.} (\l^\perp \oplus \h^- \oplus \rho(\l^\perp) \oplus \m^-).
\]
The restricted representation $\alpha = \ad|_{\l^\perp \oplus \h^-}:\l^\perp \oplus\h^-\to\so(\m^-)$ is faithful. 

\end{lemma}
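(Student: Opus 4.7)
The plan is to identify the two ideals $V_1 := \l \oplus \rho(\l)$ and $V_2 := \l^\perp \oplus \h^- \oplus \rho(\l^\perp) \oplus \m^-$, show that $[V_1, V_2] = 0$ and that both are ideals of $\g$, and then deduce faithfulness of $\alpha$ by exploiting the induced effective naturally reductive decomposition on $V_2$. First I would note that $\l = \ker \varphi$ is an ideal of $\h^+$ (kernel of a representation) and hence also of $\h$ since $[\h^-, \h^+] = 0$ by \Cref{lem:brackets m+ m-}; that $\rho$ is $\h$-equivariant (because the graph $\a$ is $\h$-invariant); and that $B = \rho^* g$ is an $\ad(\h^+)$-invariant positive definite form on $\h^+$. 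Consequently $\l^\perp$ is a complementary ideal of $\l$ in $\h^+$, so $\h^+ = \l \oplus \l^\perp$ as a Lie algebra direct sum. Using these facts together with \eqref{eq:formula for rho} and the relation $[\rho(h), m] = -\varphi(h) m$ for $m \in \m^-$ (which follows from $[\a, \m^-] = 0$), one verifies by a case-by-case check that $V_1$ is an ideal of $\g$ and that $[V_1, V_2] = 0$.

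The technical core is the closure of $V_2$ under brackets. All pairings among summands of $V_2$ land manifestly in $V_2$ except $[\m^-, \m^-]_\h \subset \h = \l \oplus \l^\perp \oplus \h^-$, whose $\l$-component I claim vanishes via the intermediate lemma $Z(\l) = 0$. For the latter, if $p \in Z(\l)$, then $[p, \l^\perp] = [\l, \l^\perp] = 0$ and $[p, \h^-] = 0$, so $p \in Z(\h)$; combined with $[p, \m^-] = \varphi(p) \cdot \m^- = 0$ (from $p \in \l$) and $[p, \m^+] = \rho([p, \h^+]) = 0$ (from $\h$-equivariance of $\rho$ and $p \in Z(\h^+)$), effectiveness of $\g$ forces $p = 0$. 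Now, for $h \in \l$ and $m_1, m_2 \in \m^-$ the Jacobi identity combined with $[h, m_i] = 0$ gives $[h, [m_1, m_2]] = 0$; since $[m_1, m_2]_\m \in \m^-$ is also killed by $h$, we obtain $[h, [m_1, m_2]_\h] = 0$, and decomposing $[m_1, m_2]_\h = h_\l + h_{\l^\perp} + h_-$ and using $[\l, \l^\perp] = [\h^+, \h^-] = 0$ reduces this to $[h, h_\l] = 0$. Letting $h$ range over $\l$ forces $h_\l \in Z(\l) = 0$, so $[\m^-, \m^-]_\h \subset \l^\perp \oplus \h^-$. Together with $[V_1, V_2] = 0$ and the ideal property of $V_1$, this yields the desired decomposition $\g = V_1 \oplus_{L.a.} V_2$.

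For faithfulness, the decomposition $\g = V_1 \oplus_{L.a.} V_2$ shows that $V_2$ with isotropy $\h_2 := \l^\perp \oplus \h^-$ and tangent $\m_2 := \rho(\l^\perp) \oplus \m^-$ is itself an effective naturally reductive decomposition (effectiveness transfers from $\g$ using $[V_1, V_2] = 0$). Given $p + q \in \ker \alpha$ with $p \in \l^\perp$ and $q \in \h^-$, the identity $\varphi(p) + \ad(q)|_{\m^-} = 0$ in $\so(\m^-)$ can be bracketed with $\varphi(p')$ for $p' \in \l^\perp$: this yields $\varphi([p, p']) = 0$ (using $[\h^-, \h^+] = 0$), so $[p, p'] \in \l \cap \l^\perp = 0$ and $p \in Z(\l^\perp)$. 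Analogously, bracketing with $\ad(q')|_{\m^-}$ for $q' \in \h^-$ and invoking the injectivity of $\ad|_{\h^-} \colon \h^- \to \so(\m^-)$ (which follows from $[\h^-, \m^+] = 0$ together with effectiveness) yields $q \in Z(\h^-)$. The $\h$-equivariance of $\rho$ then gives $[p, \rho(\l^\perp)] = \rho([p, \l^\perp]) = 0$, while $[q, \rho(\l^\perp)] \subset [\h^-, \m^+] = 0$, so $\ad(p+q)|_{\m_2} = 0$ and effectiveness of $V_2$ forces $p + q = 0$. The main difficulty is recognizing that $Z(\l) = 0$ is precisely the identity that isolates the $\l$-component of $[\m^-, \m^-]_\h$; once this is in hand, an entirely analogous commutator argument inside $V_2$ delivers the faithfulness of $\alpha$.
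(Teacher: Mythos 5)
Your overall strategy---exhibiting $V_1=\l\oplus\rho(\l)$ and $V_2=\l^\perp\oplus\h^-\oplus\rho(\l^\perp)\oplus\m^-$ as commuting ideals and deducing faithfulness from effectiveness---matches the paper's, and your faithfulness argument (bracketing the relation $\varphi(p)+\ad(q)|_{\m^-}=0$ inside $\so(\m^-)$ to get $p\in\mc Z(\l^\perp)$, then using effectiveness of the induced decomposition of $V_2$) is a valid variant of the paper's more direct Jacobi-identity computation. However, there is a genuine error in your closure argument for $V_2$: you assert that $[m_1,m_2]_\m\in\m^-$ for $m_1,m_2\in\m^-$. This is false in general. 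By \Cref{prop:torsion and curvature of type II} the torsion has the component $\sum_i\varphi(h_i)\wedge m_i\in\Lambda^2\m^-\otimes\m^+$, so $[\m^-,\m^-]_\m$ has a non-trivial $\m^+$-component whenever $\varphi\neq 0$, i.e.\ precisely in the non-degenerate situation the lemma addresses (your claim holds only when $\l=\h^+$). The same oversight hides a second unverified point: your statement that all pairings other than $[\m^-,\m^-]_\h$ land ``manifestly'' in $V_2$ skips the check that the $\m^+$-component of $[\m^-,\m^-]$ has no $\rho(\l)$-part, which is exactly what is needed for $V_2$ to be closed in the $\m$-direction.

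Both points are repaired by one natural-reductivity computation. For $h\in\l$ one has $[\rho(h),m_1]=-\varphi(h)m_1=0$ by \Cref{lem:brackets m+ m-}, hence $g([m_1,m_2]_\m,\rho(h))=-g(m_2,[m_1,\rho(h)]_\m)=0$; since $\rho$ is an isometry from $(\h^+,\rho^*g)$ to $(\m^+,g)$, the $\m^+$-component of $[\m^-,\m^-]$ therefore lies in $\rho(\l^\perp)$. This simultaneously closes $V_2$ in the $\m$-direction and rescues your deduction that $[h,[m_1,m_2]_\m]=0$ for $h\in\l$ (because $[\l,\rho(\l^\perp)]=\rho([\l,\l^\perp])=0$ and $[\l,\m^-]=0$), after which your $\mc Z(\l)=0$ argument eliminating the $\l$-component of $[\m^-,\m^-]_\h$ goes through as written. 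The paper sidesteps this entire issue by reading the closure of $V_2$ off the torsion and curvature formulas of \Cref{prop:torsion and curvature of type II}, and proves faithfulness by showing directly that $[h,\m^+]=0$ for $h\in\ker(\alpha)$ via $\rho^{-1}([h,m])\in\l\cap\l^\perp$; your route buys a proof that does not invoke the curvature formula, at the cost of the extra lemma $\mc Z(\l)=0$ and the metric computation above.
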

\begin{proof}
Let $\m^+_\l:=\rho(\l)$ and let $\m^+_{\l^\perp} := \rho(\l^\perp)$ be the orthogonal complement in $\m^+$. Since $\l$ is an ideal we obtain $\m^+_\l \subset \m^+$ is an $\h^+$-invariant subspace and so is $\m^+_{\l^\perp}$. Combining this with \Cref{rem:submodule in m^+} we see that $\l\oplus \m^+_\l$ commutes with $\l^\perp \oplus \m^+_{\l^\perp}$. Let $n\in \m^-$ and $h+m\in \a$ with $h\in \l$, $m\in \m^+_\l$. Then by \Cref{lem:brackets m+ m-} we have
\[
0=[h+m,n]=[h,n]+[m,n]=[m,n].
\]
Hence $\m^+_\l$ also commutes with $\m^-$ and thus it commutes with its orthogonal complement in $\m$. From \Cref{lem:brackets m+ m-}.\emph{iii)} it follows that $\l\oplus \m^+_{\l}$ commutes with $\h^-$. Since $\l\oplus \m^+_{\l}$ is a subalgebra we obtain it is an ideal and it commutes with $\l^\perp\oplus \h^- \oplus \m^+_{\l^\perp} \oplus \m^-$. From \Cref{prop:torsion and curvature of type II} we can immediately see that $\l^\perp \oplus \h^- \oplus \m^+_{\l^\perp} \oplus \m^-$ is a subalgebra and thus also an ideal.

Suppose that $h\in \ker(\alpha)$. For all $m\in \m^+$ and $n\in \m^-$ we have $[m,n]\in \m^-$ by \Cref{lem:brackets m+ m-}.\emph{ii)} and \Cref{lem:lie algebra fiber bundle basic lemma}.\emph{ii)}. Thus
\[
0=[h,[m,n]]=[[h,m],n]+[m,[h,n]]=[[h,m],n].
\]
We conclude that $[h,m]\in \m^+$ commutes with $\m^-$. This implies $\rho^{-1}([h,m])\in \l$. On the other hand $\rho^{-1}([h,m])=[h,\rho^{-1}(m)]\in \mf l^\perp$, because $h\in \mf l^\perp \oplus \h^-$ and $\rho^{-1}(m)\in \h^+$. We obtain $\rho^{-1}([h,m]) \in \l\cap \l^\perp = \{0\}$. Thus $[h,m]=0$ for all $m\in \m^+$. In total we have $[h,\m]=\{0\}$. This implies $h=0$, because we assumed the reductive decomposition to be effective. We conclude $\ker(\alpha)=\{0\}$.
\end{proof}

By \Cref{lem:split torsion} the above \Cref{lem:hol rep on m^-} implies that for an irreducible naturally reductive decomposition $\g=\h\oplus \m$ and any abelian ideal $\a\subset \g$ there are two possible cases: $\ker(\varphi) = \{0\}$ or $\m^-=\{0\}$. The case $\m^-=\{0\}$ corresponds to the $(\mf k,B)$-extensions of a point space.

\begin{lemma} \label{lem:semidirect product}
Let $\g=\h\oplus \m$ be an effective irreducible naturally reductive decomposition with an abelian ideal $\a \subset \g$. Let $\g = \h^+ \oplus \h^- \oplus\m^+ \oplus \m^-$ be the fiber decomposition associated with $\a$. Let $\h_0:=\pi_\h([\m^-,\m^-])$, where this time $\pi_\h$ is the projection onto $\h$ along $\a\oplus\m^-$ in $\g=\h\oplus \a\oplus \m^-$. Let $\h_0^\perp\subset \h $ be a complementary ideal of $\h_0$ in $\h$. Then $\a\oplus \h_0\oplus \m^-$ is a subalgebra of $\g$ and
\[
\g \cong \h_0^\perp \ltimes (\a \oplus \h_0\oplus \m^-).
\]
Moreover, $\a$ is contained in the center of $\a \oplus \h_0\oplus \m^-$. If we define a Lie algebra structure on $\g^-:=\h_0\oplus \m^-$ induced by the quotient $\h_0\oplus \m^-\cong (\a\oplus \h_0\oplus \m^-)/\a$, then $\g^-=\h_0\oplus \m^-$ is a naturally reductive decomposition of the base space, with $\g^-$ its transvection algebra.
\end{lemma}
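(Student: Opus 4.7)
The plan is to identify $\h_0$ concretely, verify that $\a \oplus \h_0 \oplus \m^-$ is a subalgebra in which $\a$ is central, exhibit $\h_0^\perp$ as a complementary ideal, and finally identify the quotient by $\a$ with the transvection algebra of the base space. Since the decomposition is irreducible, the remark following \Cref{lem:general form of abelian ideal} gives $\m_\a=\{0\}$, so $\g=\h\oplus\a\oplus\m^-$ as a vector space, with $\a$ projecting isomorphically onto $\m^+$ under $\pi_\m$. A short natural-reductivity computation using $[\m^-,\m^+]\subset\m^-$ (from \Cref{lem:brackets m+ m-}) shows $g([x,y]_\m,z)=-g(y,[x,z])=0$ for $x,y\in\m^-$ and $z\in\m^+$, hence $[\m^-,\m^-]\subset\h\oplus\m^-$. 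Consequently the projection $\pi_\h$ appearing in the statement agrees with the standard projection along $\m$ on the subspace $[\m^-,\m^-]$, so $\h_0=[\m^-,\m^-]_\h$.

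Next I would show $\h_0$ is an ideal of $\h$: for $h\in\h$ and $[x,y]_\h\in\h_0$, the Jacobi identity gives $[h,[x,y]]=[[h,x],y]+[x,[h,y]]$, and both summands lie in $[\m^-,\m^-]\subset\h_0\oplus\m^-$ by $\h$-invariance of $\m^-$, so projecting to $\h$ gives $[h,[x,y]_\h]\in\h_0$. Since $\h$ embeds faithfully into $\so(\m)$ it is compact and hence reductive, so a complementary ideal $\h_0^\perp\subset\h$ exists. I would then verify that $\a\oplus\h_0\oplus\m^-$ is a subalgebra using the brackets just established, together with $[\a,\m^-]=0$ (\Cref{lem:brackets m+ m-}.\emph{iv)}) and $[\a,\h_0]=0$, the latter following from a short Jacobi calculation using that $\a$ is an abelian ideal and that $[\a,\m^-]=0$. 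This simultaneously proves that $\a$ is central in $\a\oplus\h_0\oplus\m^-$. Bracketing with $\h_0^\perp$ preserves this subspace ($\a$ is an ideal of $\g$, $[\h_0^\perp,\h_0]=0$ since they are complementary ideals of $\h$, and $\h_0^\perp$ preserves $\m^-$), so $\a\oplus\h_0\oplus\m^-$ is even an ideal of $\g$, yielding the semidirect product decomposition.

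For the final claim, since $\h_0\oplus\m^-$ is already a subalgebra of $\g$ transverse to the central ideal $\a$, the quotient $\g^-=(\a\oplus\h_0\oplus\m^-)/\a$ carries precisely the bracket of $\g$ restricted to $\h_0\oplus\m^-$. Natural reductivity is immediate from $\ad(\h_0)\subset\so(\m^-)$, and the image of the curvature tensor equals $\ad(\h_0)$ by the very definition of $\h_0$. The only remaining point for $\g^-$ to be the transvection algebra is injectivity of $\ad\colon\h_0\to\so(\m^-)$, and this is where irreducibility is used crucially: \Cref{lem:hol rep on m^-} combined with irreducibility forces $\ker(\varphi)=\{0\}$, so $\alpha=\ad|_\h$ is faithful on $\m^-$, and hence so is its restriction to $\h_0$. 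The main obstacle I anticipate is the bookkeeping between the two $\h$-projections (the one in the lemma statement and the standard one along $\m$); once the inclusion $[\m^-,\m^-]\subset\h\oplus\m^-$ is in hand, the two agree on the relevant subspace and the remaining verifications are short.
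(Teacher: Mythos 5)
The proof breaks at its first step: the claim $[\m^-,\m^-]\subset\h\oplus\m^-$ is false in general, and with it your identification $\h_0=[\m^-,\m^-]_\h$. In the natural-reductivity computation, for $x,y\in\m^-$ and $z\in\m^+$ one gets $g([x,y]_\m,z)=-g(y,[x,z]_\m)$ with $[x,z]\in\m^-$ (because $[\m^+,\m^-]\subset\m^-$), so the right-hand side pairs two elements of $\m^-$ and has no reason to vanish. In fact this quantity is exactly the $\Lambda^2\m^-\otimes\m^+$ component $\sum_i\varphi(h_i)\wedge m_i$ of the torsion in \Cref{prop:torsion and curvature of type II}, which is nonzero whenever $\varphi\neq 0$, i.e.\ precisely in the irreducible type II situation the lemma addresses. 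Concretely, for the Heisenberg group $H_3$ (the $(\so(2),B)$-extension of flat $\R^2$) one has $\m^-=\operatorname{span}(e_1,e_2)$, $\m^+=\operatorname{span}(e_3)$, $T=\lambda e_{123}$, $R(e_1,e_2)=\lambda^2\,e_1\wedge e_2=\lambda^2\ad(A)$, so $[e_1,e_2]=-\lambda^2 A-\lambda e_3=-\lambda^2(A+\lambda^{-1}e_3)\in\a$. Hence $[\m^-,\m^-]\not\subset\h\oplus\m^-$, and your $\h_0$ would be $[\m^-,\m^-]_\h=\operatorname{span}(A)=\h$, whereas the $\h_0$ of the statement (projection along $\a\oplus\m^-$) is $\{0\}$. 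The discrepancy is fatal for the conclusion: the base of $H_3\to\R^2$ is flat with transvection algebra $\R^2$, not $\so(2)\ltimes\R^2$, so with your $\h_0$ the final assertion fails; moreover in general $[\a,[\m^-,\m^-]_\h]$ need not vanish, since it differs from $[\a,[\m^-,\m^-]]$ by $[\a,[\m^-,\m^-]_{\m^+}]$, so even centrality of $\a$ can break.

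The unusual projection $\pi_\h$ along $\a\oplus\m^-$ is exactly the point of the lemma: it absorbs (via $-\rho^{-1}$) the $\m^+$-component of $[\m^-,\m^-]$, so that $\ad(\h_0)|_{\m^-}$ coincides with the image of the base-space curvature $R_0(x,y)=-\ad([x,y]_{\mf e})$, the $\a$-part of $[x,y]_{\mf e}$ acting trivially on $\m^-$. The paper's proof never needs $[\m^-,\m^-]\subset\h\oplus\m^-$: it establishes the subalgebra property and the centrality of $\a$ in one stroke via $[\a,\h_0]=[\a,\pi_\h([\m^-,\m^-])]=[\a,[\m^-,\m^-]]=\{0\}$, using only $[\a,\a\oplus\m^-]=\{0\}$ and the Jacobi identity, and it obtains the transvection property from $[\m^-,\m^-]_{\h_0}=\h_0$ in the quotient together with effectiveness from \Cref{lem:hol rep on m^-} (covering both the case $\m^-=\{0\}$ and the case $\ker(\varphi)=\{0\}$). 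Your remaining steps (that $\h_0$ is an ideal of $\h$, the semidirect decomposition, faithfulness from irreducibility) run along the same lines as the paper, but they only become available once $\h_0$ is the one actually defined in the statement; as written, the proof establishes the lemma for a different, generally larger subalgebra for which the conclusion is false.
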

\begin{proof}
To see that $\a\oplus\h_0\oplus \m^-$ is a subalgebra of $\g$ we first note that $[\h_0,\m^-]\subset \m^-$ and $[\a,\m^-]=\{0\}$, see \Cref{lem:brackets m+ m-}.\emph{iv)}. Therefore, the inclusions which we still need to check are:
\[
 [\m^-,\m^-]\subset \a \oplus \h_0\oplus \m^-  \quad \mbox{and}\quad [\a,\h_0]\subset \a \oplus \h_0\oplus \m^-.
\]
Clearly we have $[\m^-,\m^-]\subset \a \oplus \h_0\oplus \m^-$. We know that $[\a,\m^-]=\{0\}$ and thus 
\[
 [\a,\h_0 ]= [\a,\pi_\h([\m^-,\m^-])]= [\a,[\m^-,\m^-]] = [[\a,\m^-],\m^-]+[\m^-,[\a,\m^-]] =\{0\}.
\]
Thus, $\a\oplus\h_0\oplus \m^-$ is a subalgebra and $\a$ is contained in its center. By definition of $\h_0^\perp$ we have $[\h_0^\perp,\h_0] = \{0\}$. Furthermore, we know $[\h_0^\perp,\a\oplus \m^-]\subset \a\oplus \m^-$. We conclude that $\g\cong \h_0^\perp\ltimes (\a\oplus\h_0\oplus \m^-)$. We have shown that $(\a\oplus \h_0)\oplus \m^-$ is a naturally reductive decomposition of the base space. We also know that $[\a,\m^-]=\{0\}$. Therefore, the quotient $\h_0\oplus \m^-$ still defines a naturally reductive decomposition of the base space. Moreover, this decomposition is effective by \Cref{lem:hol rep on m^-} both for the case $\m^-=\{0\}$ and for the case $\ker(\varphi)=\{0\}$. By definition we have $[\m^-,\m^-]_{\h_0}=\h_0$ and thus we conclude that $\g^-$ is the transvection algebra of the base space.
\end{proof}

\begin{lemma}\label{lem:h^+ are derivations}
Let $\g = \h^+ \oplus \h^- \oplus \m^+ \oplus \m^-$ be an irreducible  naturally reductive decomposition with $\g$ its transvection algebra and with $\ker(\varphi)=\{0\}$. Let $\g^-$ be the Lie algebra from {\rm\Cref{lem:semidirect product}}. Then $\h^+$ can be identified with a subalgebra of $\s(\g^-)$. Moreover, the maximal abelian ideal of $\g^-$ is preserved by $\h^+$.
\end{lemma}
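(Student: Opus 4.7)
The plan is to embed $\h^+$ into $\s(\g^-)$ by sending $h\in\h^+$ to the derivation $\hat h\in\s(\g^-)$ with $\hat h|_{\m^-}=\varphi(h)$, via \Cref{lem:s(g) as intertwining maps}; this requires that $\varphi(h)\in\so_{\h_0}(\m^-)$ and that $\varphi(h)\cdot T_0=0$. Note that the adjoint action of $\h_0$ on $\m^-$ in $\g^-$ coincides with $\ad_\g(\h_0)|_{\m^-}$, since $\g^-\cong(\a\oplus\h_0\oplus\m^-)/\a$ and $[\a,\m^-]=\{0\}$.

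The invariance $\varphi(h)\cdot T_0=0$ is immediate: $T$ is $\h$-invariant, so $h\cdot T=0$; the element $h\in\h^+$ preserves the grading of $\Lambda^3\m$ induced by $\m=\m^+\oplus\m^-$; and \Cref{prop:torsion and curvature of type II} identifies $T_0$ with the $\Lambda^3\m^-$-component of $T$, so projecting $h\cdot T=0$ onto $\Lambda^3\m^-$ yields the result.

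For the commutation with $\h_0$, write $h_0=h_0^++h_0^-$ with $h_0^\pm\in\h^\pm$. Since $[\h^+,\h^-]=\{0\}$, the claim reduces to $h_0^+\in Z(\h^+)$, because then $[h,h_0]=0$ in $\g$ and so $[\varphi(h),\ad(h_0)|_{\m^-}]=\ad([h,h_0])|_{\m^-}=0$. By \Cref{lem:semidirect product}, an element of $\h_0$ has the form $[m_1,m_2]_\h-\rho^{-1}([m_1,m_2]_{\m^+})$ with $m_1,m_2\in\m^-$, and the torsion formula of \Cref{prop:torsion and curvature of type II} gives $[m_1,m_2]_{\m^+}=-\sum_i g(\varphi(h_i)m_1,m_2)\,m_i$, so
\[
h_0^+ \;=\; [m_1,m_2]_{\h^+} + \sum_i g(\varphi(h_i)m_1,m_2)\,h_i .
\]
Evaluating the curvature identity $R=R_0+\sum\psi(h_i)\odot\psi(h_i)$ at $(m_1,m_2)\in\Lambda^2\m^-$ and restricting to $\m^+$ kills the $R_0$-contribution (since $R_0(m_1,m_2)\in\so(\m^-)$); combined with $R(m_1,m_2)=-\ad([m_1,m_2]_\h)$ and $[\h^-,\m^+]=\{0\}$, this produces $\ad(h_0^+)|_{\m^+}=0$. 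Since $\rho\colon\h^+\to\m^+$ is an $\h^+$-equivariant linear isomorphism, the representation $\ad|_{\m^+}\colon\h^+\to\End(\m^+)$ is isomorphic to the adjoint representation of $\h^+$ on itself, whose kernel is $Z(\h^+)$; therefore $h_0^+\in Z(\h^+)$.

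The resulting assignment $h\mapsto\hat h$ is a Lie algebra homomorphism (the identification in \Cref{lem:s(g) as intertwining maps} is a Lie algebra isomorphism and $\varphi$ is a representation) and is injective because $\ker\varphi=\{0\}$. The preservation of the maximal abelian ideal of $\g^-$ follows at once from \Cref{lem:max abelian ideal}, since $\s(\g^-)\subset\Der(\g^-)$. The main obstacle is the identification $h_0^+\in Z(\h^+)$: a priori $\h_0$ need not be contained in $\h^-$, and only the combined torsion/curvature formulas together with the $\h^+$-equivariance of $\rho$ force its $\h^+$-component into the centre of $\h^+$.
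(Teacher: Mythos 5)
Your argument is correct and follows the same skeleton as the paper: show $\varphi(\h^+)\subset\{k\in\so_{\h_0}(\m^-):k\cdot T_0=0\}$, identify this with a subalgebra of $\s(\g^-)$ via \Cref{lem:s(g) as intertwining maps} (using that $\g^-$ is the transvection algebra of the base, from \Cref{lem:semidirect product}), and conclude the last claim from \Cref{lem:max abelian ideal}. The only place where you genuinely diverge is the commutation with $\h_0$: you re-derive it through the torsion and curvature formulas of \Cref{prop:torsion and curvature of type II} together with the $\h^+$-equivariance of $\rho$, arriving at $\ad(h_0^+)|_{\m^+}=0$ and hence $h_0^+\in\mc Z(\h^+)$. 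That computation is sound (the signs in $[m_1,m_2]_{\m^+}=-\sum_i g(\varphi(h_i)m_1,m_2)m_i$ and in the restricted curvature identity check out, and pair symmetry of $R$ justifies using the $\Lambda^2\m^-\otimes\Lambda^2\m^+$ component), but it is a detour: the paper gets $[\h^+,\h_0]=\{0\}$ in one line from $[\a,\h_0]=\{0\}$, which was already established in \Cref{lem:semidirect product} --- for $a=h+\rho(h)\in\a$ the identity $0=[a,h_0]=[h,h_0]+[\rho(h),h_0]$ splits into its $\h$- and $\m$-components, so both vanish; this simultaneously gives your $\ad(h_0)|_{\m^+}=0$ for free. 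So your proof buys nothing extra except an explicit description of $\h_0$ inside $\h^+\oplus\h^-$, at the cost of invoking the full infinitesimal-model formulas where a direct bracket computation suffices; otherwise the two proofs coincide.
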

\begin{proof}
By \Cref{lem:semidirect product} we know that $[\a,\h_0]=\{0\}$ and this implies that $[\h^+,\h_0]=\{0\}$. Thus, we obtain
\[
\varphi(\h^+) \subset \{h\in \so_{\h_0}(\m^-):h\cdot T_0=0 \}.
\]
Since $\g^-$ is the transvection algebra of $\h_0\oplus \m^-$ it follows by \Cref{lem:s(g) as intertwining maps} that $\h^+$ is identified with a subalgebra of $\s(\g^-)$. By \Cref{lem:max abelian ideal} all derivations of $\g^-$ preserve the maximal abelian ideal, so in particular $\h^+$ preserves it.
\end{proof}

Let the notation be as in \Cref{lem:semidirect product} and let
\begin{equation}\label{eq:quotient map g to g/a}
p:\g \to \g/\a \cong \h_0^\perp \ltimes \g^-
\end{equation}
be the quotient map. Now we come to the main result of this section.

\begin{theorem}\label{thm:canonical base}
Let $(\g=\h\oplus \m,g)$ be an irreducible naturally reductive decomposition with $\g$ its transvection algebra. Let 
\[
\g = \h^+\oplus \h^-\oplus \m^+\oplus \m^-
\]
be the fiber decomposition with respect to the maximal abelian ideal $\a$. Then the base space associated to $\a$ is isomorphic to the following naturally reductive decomposition
\[
(\g^- = (\h_0 \oplus \m_0 )\oplus_{L.a.} \R^n,g|_{\m^-\times \m^-}),
\]
where $\h_0\oplus \m_0$ is semisimple or $\{0\}$. Moreover, $(\g=\h\oplus \m,g)$ is isomorphic to the $(\varphi(\h^+),\rho^*g|_{\m^+\times \m^+})$-extension of $\g^-=\h_0\oplus \m^-$.
\end{theorem}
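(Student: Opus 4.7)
The plan is to assemble the structural results from the preceding lemmas into the two halves of the statement. First I would invoke \Cref{lem:semidirect product} to identify the base-space data: it yields the naturally reductive decomposition $\g^- = \h_0 \oplus \m^-$ with transvection algebra $\g^-$ itself, the semidirect product structure $\g \cong \h_0^\perp \ltimes (\a \oplus \h_0 \oplus \m^-)$, and the centrality of $\a$ inside $\a \oplus \h_0 \oplus \m^-$. This settles the base-space side of the claim apart from its internal Lie-algebraic form.

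The crucial step is then to show that $\g^-$ splits as $(\h_0 \oplus \m_0) \oplus_{L.a.} \R^n$ with $\h_0 \oplus \m_0$ semisimple (or zero). By \Cref{lem:max abelian ideal} applied to $\g^-$ there is a maximal abelian ideal $\a^* \subset \g^-$, and by \Cref{lem:h^+ are derivations} this $\a^*$ is preserved by $\varphi(\h^+) \subset \s(\g^-)$. I would then argue, using maximality of $\a$ in $\g$, that $\a^*$ must be central in $\g^-$ and contained in $\m^-$. Applying \Cref{lem:general form of abelian ideal} to $\g^-$ with $\a^*$ splits off the ``diagonal'' part with trivial intersection with both $\h_0$ and $\m^-$; if that part were nonzero, one lifts it to $\g$ via the subspace embedding $\h_0 \oplus \m^- \subset \g$ from \Cref{lem:semidirect product} and combines it with $\a$, using both the centrality of $\a$ in $\a \oplus \g^-$ and the derivation action of $\h^+$, to form a strictly larger abelian ideal of $\g$, contradicting maximality. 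A parallel argument removes any non-central element of $\a^* \cap \m^-$. Once $\a^*$ is central and inside $\m^-$, the Lie algebra $\g^-$ is reductive with centre $\R^n := \a^*$, so $\g^- = [\g^-, \g^-] \oplus_{L.a.} \R^n$; since $\h_0 = \pi_\h([\m^-, \m^-]) \subset [\g^-, \g^-]$, the commutator subalgebra equals $\h_0 \oplus \m_0$ with $\m_0 := \m^- \cap [\g^-, \g^-]$, giving $\m^- = \m_0 \oplus \R^n$ orthogonally.

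For the second half---that $(\g, \h \oplus \m, g)$ is isomorphic to the $(\varphi(\h^+), \rho^* g|_{\m^+ \times \m^+})$-extension of $\g^-$---I would match infinitesimal models directly. Setting $\mf k := \varphi(\h^+)$ and $B := \rho^* g|_{\m^+ \times \m^+}$, invariance of $B$ under $\ad(\mf k)$ follows from $\rho$ being an $\h^+$-module isomorphism (and $\ker(\varphi) = \{0\}$ in the nontrivial case by \Cref{lem:hol rep on m^-}). The torsion and curvature of the resulting $(\mf k, B)$-extension, given by \eqref{eq: T} and \eqref{eq: R}, then coincide term-by-term with the formulas derived in \Cref{prop:torsion and curvature of type II}: the summands $\varphi(h_i) \wedge m_i$ and $2 T_{\m^+}$ of $T$ correspond to $\varphi(k_i) \wedge n_i$ and $2 T_\n$ via the identifications $k_i \leftrightarrow h_i$ and $n_i \leftrightarrow m_i$, and $R_{\mf k}(x, y, u, v) = \sum \psi(k_i)(x, y)\, \psi(k_i)(u, v)$ is exactly $\sum \psi(h_i) \odot \psi(h_i)$. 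An application of \Cref{lem:inf models isomorphic} then upgrades the equality of infinitesimal models to an isomorphism of transvection algebras.

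The hard part is the lifting argument in the middle step: showing that a hypothetical non-central or non-$\m^-$-contained piece of $\a^*$ yields an honestly abelian (not merely nilpotent) ideal of $\g$ strictly larger than $\a$ requires carefully controlling the cocycle correction coming from the subspace embedding $\g^- \subset \g$, so that it always lands inside $\a$ in a way compatible with abelianness. The remaining steps---invariance of $B$, formula matching, and the identification of $[\g^-, \g^-]$ with $\h_0 \oplus \m_0$---are routine once the maximality-based structural statement is in hand.
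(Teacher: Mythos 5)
Your overall strategy is the same as the paper's: identify the base space via \Cref{lem:semidirect product}, take the maximal abelian ideal $\b$ of $\g^-$ (\Cref{lem:max abelian ideal}), split it as $\b=\b'\oplus\m^-_\b$ by \Cref{lem:general form of abelian ideal}, kill $\b'$ by contradicting maximality of $\a$ in $\g$, and finally match infinitesimal models through $\rho\oplus\id$ using \Cref{prop:torsion and curvature of type II}, \eqref{eq: T}, \eqref{eq: R} and \Cref{lem:inf models isomorphic}. The second half (invariance of $B$, term-by-term matching) is fine and is exactly what the paper does.

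The genuine gap is at the decisive middle step, and you flag it yourself without closing it: you assert that a nonzero $\b'$ can be ``lifted and combined with $\a$'' to produce a strictly larger abelian ideal, but you never show the lifted ideal is abelian rather than merely $2$-step nilpotent, and this is where all the real work lies. The paper closes it with two concrete ingredients that are absent from your proposal. First, $\h^+$ annihilates $\b'$: since $\b'$ is $\h^+$-invariant, $[\h^+,\h_0]=\{0\}$, and $d(h+m^-)=d(m^-)\in\m^-$ for $d\in\h^+$, while $\b'\cap\m^-=\{0\}$, one gets $d(\b')=\{0\}$. Second, for $\overline{\a}:=p^{-1}(\b')$ one has $[\overline{\a},\overline{\a}]\subset\a$, and because $\pi_\m|_\a$ is injective, abelianness of $\overline{\a}$ reduces to the vanishing of the $\m^+$-component of brackets; writing $x_i=a_i+h_{0,i}+m_i^-$ this component is $\sum_j g([h_j,m_1^-],m_2^-)m_j$, which vanishes precisely by the first ingredient. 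Your phrase about ``carefully controlling the cocycle correction'' names the difficulty but supplies no argument, so the proof is incomplete at its core. Two smaller points: the centrality of $\a^*\cap\m^-$ in $\g^-$ does not need a second maximality argument --- it follows directly from $\h_0=[\m^-,\m^-]_{\h_0}$ (transvection property from \Cref{lem:semidirect product}), the Jacobi identity and \Cref{lem:general form of abelian ideal}.\emph{i)}; and your step ``maximal abelian ideal central $\Rightarrow$ $\g^-$ reductive'' is itself an unproved claim, which the paper avoids by observing that $\h_0\oplus\m_0$ contains no nonzero abelian ideal and is therefore semisimple or $\{0\}$. You should also state the initial dichotomy from \Cref{lem:hol rep on m^-} (either $\m^-=\{0\}$ or $\ker(\varphi)=\{0\}$) at the start, since the whole argument is carried out in the second case.
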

\begin{proof}
By assumption our naturally reductive decomposition is irreducible. Therefore, either $\l :=\ker(\varphi)= \h^+$ and $\m^-=\{0\}$ or $\l=\{0\}$ holds by \Cref{lem:hol rep on m^-}. In case $\l=\h^+$ we have $\g^- = \{0\}$ and thus the base space is of the required form. 

Now we consider the case $\l= \ker(\varphi)=\{0\}$. Let $\g^-=\h_0\oplus \m^-$ be the transvection algebra of the base space described in \Cref{lem:semidirect product}. Let $\b$ be the maximal abelian ideal in $\g^-$, which exists by \Cref{lem:max abelian ideal}. Then $\b$ is also an abelian ideal of $\h_0^\perp \ltimes \g^- \cong \g/\a$, where $\h_0^\perp$ is defined in \Cref{lem:semidirect product}.
By \Cref{lem:general form of abelian ideal} we can decompose $\b$ as
\[
\b = \b'\oplus \m^-_{\b},
\]
where $\b'$ satisfies $\b'\cap \m^-=\b'\cap \h_0=\{0\}$. By \Cref{lem:h^+ are derivations} we know that $\h^+\subset \s(\g^-)$ preserves $\b$ and $\m^-$. In particular this tells us that $\m^-_{\b}$ is $\h^+$-invariant and thus also the orthogonal complement of $\m^-_{\b}$ in $\m^-$ is $\h^+$-invariant. This in turn implies that $\b'$ is $\h^+$-invariant. In \Cref{lem:h^+ are derivations} we saw that $[\h^+,\h_0]=\{0\}$. We can write every $b\in \b'$ as $b=h+m^-$ with $h\in \h_0$ and $m^-\in \m^-$. If $d\in \h^+$, then 
\begin{equation}\label{eq:bracket h^+ with (m^-)^+}
\b'\ni d(b) = d(h)+ d(m^-) = d(m^-)\in \m^-.
\end{equation}
Since $\b'\cap \m^-=\{0\}$ we obtain $d(\b' )=\{0\}$. Let $\overline{\a}:=p^{-1}(\b')$, where $p$ is the map from \eqref{eq:quotient map g to g/a}. Then $\overline{\a}$ is an ideal in $\g$ and $\a\subset \overline{\a}$. Note that $p([\overline{\a},\overline{\a}])= [\b',\b']= \{0\}$ and thus $[\overline{\a},\overline{\a}]\subset \a$. Let $\pi_\m:\g \to \m$ be the projection onto $\m$ along $\h$. Then $\pi_\m|_\a$ is injective. This implies that for $x_1,x_2\in \overline{\a}$ we have
\[
[x_1,x_2] = 0 \Leftrightarrow [x_1,x_2]_{\m} = [x_1,x_2]_{\m^+} = 0.
\]
We know that $x_i = a_i+ h_{0,i} + m_i^- $, with $h_{0,i} + m_i^- \in \b'$ and $a_i\in \a$ for $i=1,2$. Let $m_1\.m_l$ be an orthonormal basis of $\m^+$ and $h_j=\rho^{-1}(m_j)$. Then
\[
[x_1,x_2]_{\m^+} = [a_1 + h_{0,1} +m_1^- , a_2 + h_{0,2} +m_2^- ]_{\m^+} = [m_1^-,m_2^-]_{\m^+} = \sum_{j=1}^l g([h_j,m_1^-],m_2^-) m_j,
\] 
where in the second equality we use $[h_{0,1},h_{0,2}]_\m = 0$, $[h_{0,i},m^-_j]\in \m^-$ and that $\a$ commutes with $\h_0\oplus\m^-$. All the summands vanish by \eqref{eq:bracket h^+ with (m^-)^+}. We conclude $[x_1,x_2]=0$ and thus $\overline{\a}$ is an abelian ideal. The maximality of $\a$ implies $\overline{\a}=\a$. Hence $\mf b'=\{0\}$. We have
\[
\g^- = \h_0\oplus \m_0 \oplus \m^-_{\b},
\]
where $\m_0:= (\m^-_{\b})^\perp \subset \m^-$. We know from \Cref{lem:general form of abelian ideal}.\emph{i)} that $[\m_0,\m^-_\b]=\{0\}$. In \Cref{lem:semidirect product} we saw that $\g^-$ is the transvection algebra of $\g^-=\h_0\oplus \m^-$, i.e. $\h_0 = [\m^-,\m^-]_{\h_0}$. Thus, we have 
\begin{align*}
[\h_0,\m^-_{\b}] &= [[\m^-,\m^-]_{\h_0},\m^-_\b] =  [[\m^-,\m^-],\m^-_\b] \\
 &= [[\m^-,\m^-_\b],\m^-] + [\m^-,[\m^-_\b,\m^-]] = \{0\}.
\end{align*}
Hence $\m^-_\b$ is in the center of $\g^-$. By \Cref{lem:general form of abelian ideal}.\emph{ii)} we know that $\h_0\oplus\m_0$ is a subalgebra of $\g^-$. We conclude that
\[
\g^- = (\h_0\oplus \m_0)\oplus_{L.a.}\m^-_\b.
\]
The subalgebra $\h_0\oplus \m_0$ has no non-trivial abelian ideals, since $\b$ is the maximal abelian ideal of $\g^-$. In other words $\h_0\oplus \m_0$ is semisimple or equal to $\{0\}$. The infinitesimal model of the $(\varphi(\h^+),\rho^*g|_{\m^+\times \m^+})$-extension is identified with the infinitesimal model of $\g=\h\oplus \m$ through the isometry $\rho\oplus \id:\h^+\oplus \m^-  \to \m^+\oplus \m^-$. It follows directly from \Cref{prop:torsion and curvature of type II} and the equations \eqref{eq: T} and \eqref{eq: R} that $\rho \oplus \id$ is an isomorphism of the infinitesimal models. We conclude that $(\g=\h\oplus \m,g)$ is isomorphic to the $(\varphi(\h^+),\rho^*g|_{\m^+\times \m^+})$-extension of $(\g^-=\h_0\oplus \m^-, g|_{\m^-\times \m^-})$.
\end{proof}

\begin{definition}
Let the notation be as in \Cref{thm:canonical base}. We call the base space associated with the maximal abelian ideal the \emph{canonical base space}. Furthermore, we will call $\m^+$ the \emph{canonical fiber direction}. 
\end{definition}

\begin{remark}
	The partial duality of pairs from \Cref{def:dual pair} also takes a very simple form for spaces of type II. If two spaces of type II are partial dual to each other, then it easily follows from \Cref{thm:canonical base} that the canonical base spaces also define partial dual pairs. This means that for every naturally reductive pair of type II there exists a partial dual pair for which the semisimple part of the canonical base space is compact.
\end{remark}

\begin{remark}
In \cite{MedinaRevoy1985} the authors proved that the class of Lie algebras which admit an invariant non-degenerate symmetric bilinear form on it is the smallest class which contains the simple and abelian Lie algebras and which is stable under direct sums and double extensions. 
\Cref{thm:canonical base} is similar in the sense that every irreducible infinitesimal model is obtained as a $(\mf k,B)$-extension of an naturally reductive infinitesimal model which has a reductive transvection algebra. The biggest difference is that we do not obtain any new spaces by repeated $(\mf k,B)$-extensions. Therefore the formula in \cite[Sec.~2.3]{Storm2018} directly describes all naturally reductive spaces.
\end{remark}

\section{Isomorphism and irreducibility criteria}

With the knowledge that any naturally reductive decomposition is a particular $(\mf k,B)$-extension we prove in this section relatively easy to check criteria for two naturally reductive spaces to be locally isomorphic. It is also important to known when a naturally reductive space is irreducible. Therefore, we give a necessary and sufficient condition for a $(\mf k,B)$-extension to be irreducible in \Cref{lem:reducibility criteria}. 
First we will investigate under which conditions the canonical base space of a $(\mf k,B)$-extension of some naturally reductive decomposition $\g=\h\oplus \m$ is  again isomorphic to $\g=\h\oplus\m$, which unfortunately is not automatically the case.

Let $(\g=\h \oplus \m,g)$ be a naturally reductive decomposition of the form
\begin{equation}\label{eq:base space form}
\g = \h \oplus \m_0 \oplus_{L.a.} \R^n,
\end{equation}
with $\g$ its transvection algebra and $\h\oplus \m_0$ a semisimple Lie algebra. Let $\g = \g_1\oplus \dots \oplus\g_k\oplus \R^n$, where $\g_1,\dots,\g_k$ are simple ideals of $\g$. Furthermore, let $\mf k\subset \s(\g)$ and $B$ some $\ad(\mf k)$-invariant inner product on $\mf k$. Let $(T,R)$ be the infinitesimal model of the $(\mf k,B)$-extension. The transvection algebra of $(T,R)$ is given by
\begin{equation}\label{eq:extension algebra}
\mf f:= \im(R) \oplus \n \oplus \m,
\end{equation}
with the Lie bracket defined by \eqref{eq:Nomizu Lie bracket}. Let $\mf d\subset \mf f$ be the maximal abelian ideal. We will prove when $\pi_{\n\oplus \m}(\mf d) = \n$, i.e. when the base space $\g = \h \oplus \m$ is equal to the canonical base space of the $(\mf k,B)$-extension. 

\begin{remark}
The map $R|_{\ad(\h\oplus \mf k)}:\ad(\h\oplus\mf k)\to \ad(\h\oplus\mf k)$ is symmetric with respect to the Killing form of $\so(\n\oplus\m)$, denoted by $B_{\so}$, and is given by
\begin{equation}\label{eq:def of Rpsi}
R|_{\ad(\h\oplus\mf k)} = R_0 + \sum_{i=1}^l \psi(k_i) \odot \psi(k_i) ,
\end{equation}
where $k_1,k_2,\dots,k_l$ is an orthonormal basis of $\mf k$ with respect to $B$.  This means we have an orthogonal direct sum 
\[
\ad(\h\oplus\mf k) = \ker(R|_{\ad(\h\oplus\mf k)})\oplus \im(R|_{\ad(\h\oplus\mf k)}).
\]
\end{remark}

\begin{notation}
In this section we will denote $R|_{\ad(\h\oplus \mf k)}$ simply by $R$ and
$R_\psi = \sum_{i=1}^l \psi(k_i) \odot \psi(k_i)$. Furthermore, the center of a Lie algebra $\g$ will be denoted by $\mc{Z}(\g)$ and the semisimple part of a reductive Lie algebra $\g$ will be denoted by $\g^{ss}$. Let $B_{\Lambda^2}$ denote the metric on $\so(\m)$ defined by $B_{\Lambda^2}(x,y) = -\frac{1}{2}\mathrm{tr}(xy)$. Note that $B_{\Lambda^2}$ is a multiple of $B_{\so}$.
\end{notation}

We recall some definitions from \cite{Storm2018}.
\begin{definition}\label{def:k_i and b_i}
Let $(\g=\h \oplus \m,g)$ be a as in \eqref{eq:base space form}. Let $\mf k\subset \s(\g)$ be a Lie subalgebra and let $B$ be an $\ad(\mf k)$-invariant inner product on $\mf k$. Then we define $\varphi_1:\mf k\to\so(\m_0)$ and $\varphi_2:\mf k \to \so(\R^n)$ to be the restricted representations of $\mf k$ and
\[
\mf k_1 := \ker(\varphi_2),\quad \mf k_3 := \ker(\varphi_1),\quad \mf k_2 := (\mf k_1\oplus \mf k_3)^\perp \subset \mf k,
\]
where the orthogonal complement is taken with respect to $B$.
Furthermore, recall that $\mf s(\h\oplus \m_0) \cong \mc{Z}(\h)\oplus \mf p$, where $\mc{Z}(\h)\subset \h$ is the center of $\h$ and $\mf p := \{m\in \m_0:[h,m]=0,~\forall h\in \h\}$. In this way we identify $\mf k_1\oplus \mf k_2\subset \Aut(\h\oplus \m_0)$ with inner derivations: $\b_1\oplus \b_2 \subset \mc{Z}(\h)\oplus \mf p\subset \h\oplus \m_0$.
\end{definition}

\begin{lemma} \label{lem:ker(R) characterisation}
Let $\g=\h\oplus \m$ be a naturally reductive decomposition with $\g$ its transvection algebra as in \eqref{eq:base space form}. Let $\mf k\subset \s(\g)$ and let $B$ be an $\ad(\mf k)$-invariant inner product on $\mf k$. Let $(T,R)$ be the infinitesimal model of the $(\mf k,B)$-extension. 
Then
\[
\ad(\h^{ss})\oplus\ad({\mf k}^{ss})=\ad(\h^{ss}\oplus {\mf k}^{ss})\subset \im(R) \quad\mbox{and}\quad \ker(R) \subset \ad(\mc Z(\h\oplus\mf k)).
\]
Moreover, if $\mf k_1=\{0\}$, then $\ker(R)=\{0\}$. 
\end{lemma}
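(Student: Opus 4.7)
The strategy is to exploit the splitting $R = R_0 + R_\psi$ on $V := \ad(\h \oplus \mf k)$, where $R_0$ is the base-space curvature (image contained in $\ad(\h)$) and $R_\psi(\alpha) = \sum_{i} B_{\Lambda^2}(\psi(k_i),\alpha)\,\psi(k_i)$ has image in $\ad(\mf k)=\Span\{\psi(k_i)\}$. Both summands are $B_{\Lambda^2}$-symmetric, and since $B_{\Lambda^2}$ is positive definite on $\so(\n\oplus\m)$, the operator $R|_V$ is self-adjoint on the inner-product space $V$, giving the orthogonal splitting $V = \ker R\oplus\im R$. I would first record the direct-sum identity $\ad(\h^{ss})\oplus\ad(\mf k^{ss}) = \ad(\h^{ss}\oplus\mf k^{ss})$: because $\mf k\subset\s(\g)$ consists of derivations killing $\h$, we have $[\h,\mf k]=0$, and any element of $\ad(\h^{ss})\cap\ad(\mf k^{ss})$ would be a $\mf k^{ss}$-element acting trivially on $\n$ (since $\ad(\h)|_\n=0$), hence central in $\mf k^{ss}$ and therefore zero, forcing the $\h$-part to vanish as well.

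To prove $\ad(\h^{ss}\oplus\mf k^{ss}) \subset \im R$, I would handle the two semisimple summands separately. On $\ad(\mf k^{ss})$, under the identification $\ad\colon\mf k^{ss}\to\ad(\mf k^{ss})$, the restriction $R_\psi|_{\ad(\mf k^{ss})}$ is the Gram operator of $B_{\Lambda^2}|_{\ad(\mf k^{ss})}$. Since $\ad(\mf k^{ss})$ is a semisimple subalgebra of $\so(\n\oplus\m)$, this restriction is a nonzero invariant symmetric bilinear form on a semisimple Lie algebra, hence non-degenerate by Cartan's criterion, making $R_\psi$ invertible onto $\ad(\mf k^{ss})$. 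For the $\h^{ss}$-summand, the input is the surjectivity $R_0\colon\Lambda^2\m_0\twoheadrightarrow\ad(\h)$, which holds because $\g$ is the transvection algebra of the semisimple base $\h\oplus\m_0$, so $[\m_0,\m_0]_{\h}=\h$; together with the analogous non-degeneracy of $B_{\Lambda^2}|_{\ad(\h^{ss})}$ this makes $R_0|_{\ad(\h)}$ bijective and in particular non-zero on $\ad(\h^{ss})$. The two facts combine, using the direct-sum identity just established, to produce the claimed inclusion.

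The containment $\ker R\subset\ad(\mc Z(\h\oplus\mf k))$ follows by $B_{\Lambda^2}$-duality: from the symmetry of $R|_V$, $\ker R = V\cap(\im R)^{\perp}\subset\ad(\h^{ss}\oplus\mf k^{ss})^{\perp}$ in $V$. The reductive structure of $\ad(\h\oplus\mf k)$, combined with the invariance of $B_{\Lambda^2}|_V$ (inherited from $\so$-invariance of $B_{\Lambda^2}$), yields the orthogonal decomposition $V = \ad(\h^{ss}\oplus\mf k^{ss})\oplus\ad(\mc Z(\h\oplus\mf k))$ by the standard cyclic-trace computation showing that any $\ad$-invariant form pairs the commutator subalgebra with the center trivially.

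Finally, for the implication $\mf k_1=\{0\}\Rightarrow\ker R=\{0\}$, write $\alpha = \ad(Z_{\h}+Z_{\mf k})\in\ker R\subset\ad(\mc Z(\h\oplus\mf k))$ with $Z_{\h}\in\mc Z(\h)$ and $Z_{\mf k}\in\mc Z(\mf k)$. The hypothesis $\mf k_1=\ker\varphi_2=\{0\}$ says $\varphi_2$ is injective on $\mf k$, hence on $\mc Z(\mf k)$. The key step is to show that this injectivity makes the restriction of $R_\psi$ to $\ad(\mc Z(\mf k))$ non-degenerate: the Gram form picks up a positive contribution from the $\R^n$-block of $B_{\Lambda^2}$ which separates elements of $\mc Z(\mf k)$, forcing $Z_{\mf k}=0$. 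An analogous base-space argument with $R_0$ and the semisimple $\h\oplus\m_0$ then yields $Z_{\h}=0$. The main obstacle I anticipate is this last calculation, since $\ad(\mc Z(\h))$ and $\ad(\mc Z(\mf k))$ need not be $B_{\Lambda^2}$-orthogonal inside $V$, so one must carefully track cross-pairings to prevent cancellation between the $R_0$ and $R_\psi$ contributions.
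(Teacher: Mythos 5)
Your overall frame --- restricting $R=R_0+R_\psi$ to $V=\ad(\h\oplus\mf k)$, using $B_{\Lambda^2}$-symmetry to split $V=\ker R\oplus\im R$, and identifying $\ad(\mc Z(\h\oplus\mf k))$ as the orthocomplement of $\ad(\h^{ss}\oplus\mf k^{ss})$ --- matches the paper. But the step you compress into ``the two facts combine \dots to produce the claimed inclusion'' is exactly where the paper does its real work, and the facts you have assembled at that point do not suffice. Knowing that $R_\psi$ is bijective on $\psi(\mf k^{ss})$ and $R_0$ is bijective on $\ad(\h)$ does not yield $\ad(\h^{ss}\oplus\mf k^{ss})\subset\im(R_0+R_\psi)$, because $R$ mixes the blocks: for $x\in\psi(\mf k^{ss})$ you must control $R_0(x)$, and for $x\in\ad(\h^{ss})$ you must control $R_\psi(x)$. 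Abstractly the implication is false: take $V=\R^3$, $\ad(\h)=\Span\{e_1,e_2\}$, $\ad(\h^{ss})=\Span\{e_1\}$, $\psi(\mf k)=\psi(\mf k^{ss})=\Span\{e_2\}$, $R_0=e_1e_1^T-e_2e_2^T$, $R_\psi=e_2e_2^T$; all your hypotheses hold, yet $\im(R)=\Span\{e_1\}$. What excludes this in the actual situation is the cross-orthogonality the paper proves as its first step: since $[\ad(\h),\psi(\mf k)]=0$ and $B_{\Lambda^2}$ is invariant, $B_{\Lambda^2}(\ad([h_1,h_2]),\psi(k))=B_{\Lambda^2}(\ad(h_1),[\ad(h_2),\psi(k)])=0$, and perfectness of $\h^{ss}$ gives $\ad(\h^{ss})\perp\psi(\mf k)$, hence $R_\psi|_{\ad(\h^{ss})}=0$; symmetrically $\psi(\mf k^{ss})\perp\ad(\h)=\im(R_0)$, hence $R_0|_{\psi(\mf k^{ss})}=0$. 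Only after this decoupling do your two bijectivity facts apply blockwise. This is \emph{not} the commutator-versus-center orthogonality you invoke later; it is an orthogonality between the two commuting factors, and you never state or prove it. A smaller flaw: your appeal to Cartan's criterion is wrong as a principle (a nonzero invariant symmetric form on a semisimple Lie algebra can be degenerate, e.g.\ the Killing form on one simple factor and zero on another); the correct and simpler reason is that $B_{\Lambda^2}(x,y)=-\tfrac12\mathrm{tr}(xy)$ is positive definite on all of $\so(\n\oplus\m)$.

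The final implication $\mf k_1=\{0\}\Rightarrow\ker R=\{0\}$ is also not established: you only sketch a plan and yourself flag the unresolved obstacle, namely possible cancellation between the $R_0$- and $R_\psi$-contributions coming from nontrivial $B_{\Lambda^2}$-pairings of $\ad(\mc Z(\h))$ with $\psi(\mc Z(\mf k))$. Trying to kill $Z_{\mf k}$ and $Z_\h$ separately by non-degeneracy of $R_\psi$ on $\psi(\mc Z(\mf k))$ and of $R_0$ on $\ad(\mc Z(\h))$ runs straight into that cancellation. The paper's argument sidesteps it entirely: if $\mf k_1=\ker(\varphi_2)=\{0\}$, then $\ad(\h)\cap\psi(\mf k)=\{0\}$, because an element $\psi(k)\in\ad(\h)$ acts trivially on $\R^n$ (as $[\h,\R^n]=0$), forcing $k\in\mf k_1$. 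Hence for $x\in\ker R$ the relation $R_0(x)=-R_\psi(x)\in R_0(\ad(\h))\cap R_\psi(\psi(\mf k))=\ad(\h)\cap\psi(\mf k)=\{0\}$ gives $R_0(x)=R_\psi(x)=0$ separately, so $x\perp\ad(\h)$ and $x\perp\psi(\mf k)$, i.e.\ $x\perp V$, and $x=0$. As written, both the inclusion $\ad(\h^{ss}\oplus\mf k^{ss})\subset\im(R)$ and this last implication are genuine gaps, though both are repairable along the paper's lines.
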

\begin{proof}
Note that $\h=\h^{ss}\oplus_{L.a.} \mc Z(\h)$. If $h_1,h_2 \in \h^{ss}$ and $k\in \mf k$, then 
\[
B_{\Lambda^2} ( \ad([h_1,h_2]), \psi(k)) = B_{\Lambda^2} ( \ad(h_1),\ad([h_2, k])) =0.
\]
The Lie algebra $\h^{ss}$ is semisimple, so $[\h^{ss},\h^{ss}]=\h^{ss}$. Therefore, for all $h\in\h^{ss}$ and $k\in \mf k$ we obtain $B_{\Lambda^2}(\ad(h),\psi(k)) = 0$. For all $h\in \h^{ss}$ this implies 
\[
R_\psi(\ad(h)) = \sum_{i=1}^l B_{\Lambda^2}(\ad(h),\psi(k_i))\psi(k_i) =0,
\]
where $R_\psi$ is as defined in \eqref{eq:def of Rpsi}. Thus $R(\ad(h)) = R_0(\ad(h)) \neq 0$. 
By assumption we have $R_0(\ad(\h))=\ad(\h)$. Hence, $R_0:\ad(\h)\to\ad(\h)$ is a Lie algebra isomorphism. This implies that $R(\ad(\h^{ss})) = R_0(\ad(\h^{ss})) = \ad(\h^{ss})$. Similarly we prove that $R(\psi({\mf k}^{ss})) = \psi({\mf k}^{ss})$. Consequently, $\ker(R)\subset \ad(\mc Z(\h\oplus \mf k))$, because $\ker(R)\perp \im(R)$ and $\ad(\mc Z(\h\oplus \mf k))$ is the orthogonal complement of $\ad(\h^{ss}\oplus{\mf k}^{ss})$ in $\ad(\h\oplus \mf k)$.

If $\mf k_1=\{0\}$, then $R_0(\ad(\h))\cap R_\psi(\psi(\mf k)) = \{0\}$. Therefore, $\ker(R)= \{0\}$, because $R_0:\ad(\h)\to \ad(\h)$ and $R_\psi:\psi(\mf k)\to\psi(\mf k)$ are both injective.
\end{proof}

Since $\ad(\h\oplus \mf{k})\subset \{h\in \so(\n\oplus\m):h\cdot T=0,~h\cdot R=0\}$ we get a Lie algebra homomorphism
\begin{equation}\label{eq:lie hom g(k) to sym alg}
q:\g(\mf{k})\longrightarrow \ad(\h\oplus \mf{k})\oplus \n\oplus \m;\quad h+k+n+m\mapsto \ad(h+k)+n+m,
\end{equation}
where $\ad(\h\oplus \mf{k})\oplus \n\oplus \m$ is a subalgebra of the symmetry algebra defined in \eqref{eq:Nomizu Lie algebra}. Note that $\mf f$ is a ideal of $\ad(\h\oplus\mf k)\oplus \n\oplus \m$. 
Let $\a \subset \mf k\oplus \n$ be the diagonal subspace. It is easy to see that $\a \subset \g(\mf k)$ is an abelian ideal.
Furthermore, let $p:\g(\mf k)\to \g(\mf k)/\a$ be the quotient Lie algebra homomorphism.
We summarize this in the following diagram:
\begin{equation}\label{eq:diagram p and q}
\xymatrix{
& \g(\mf k)\ar[d]^q\ar[rd]^p &
\\
\mf f~ \ar@{^{(}->}[r]& \ad(\h\oplus \mf k)\oplus \n\oplus \m & \g(\mf k)/\a \cong \mf k\ltimes \g~.
}
\end{equation}
The following proposition proves when the canonical base space of a $(\mf k,B)$-extension of $\g=\h\oplus \m$ is again equal to $\g=\h\oplus \m$. 
In the following the diagonal in ${\mf k}^{ss}\oplus \n^{ss}$ is denoted by $\a^{ss}$. 

\begin{proposition}\label{lem:canonical base iff}
Let $\g=\h\oplus \m$ be as in \eqref{eq:base space form} and let $\mf f$ be the transvection algebra of a $(\mf k,B)$-extension as in \eqref{eq:extension algebra} with maximal abelian ideal $\mf d\subset \mf f$. The following are equivalent
\[
\pi_{\n\oplus \m}(\mf d) = \n \iff \begin{cases}
(i)~~\pi_\m(\mc Z(\b_1))=\{0\}~~ \rm{ and},\\
(ii)~\ker(R)=\{0\},
\end{cases}
\]
where $\pi_{\n\oplus\m}$ and $\pi_{\m}$ denote the projections onto $\n\oplus \m$ and $\m$, respectively.
\end{proposition}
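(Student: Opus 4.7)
The plan is to describe the maximal abelian ideal $\mf d\subset\mf f$ explicitly as a sum of two ingredients: a ``diagonal'' piece $\mf d_0$ inherited from the abelian ideal $\a\subset\g(\mf k)$, and an ``extra'' piece parametrized by $\mc Z(\b_1)$. Condition (ii) will then control whether $\mf d_0$ already surjects onto $\n$, while condition (i) will control whether the extra piece contributes anything to the $\m$-projection. First, since $R$ is $\ad(\h\oplus\mf k)$-equivariant, $\im(R)$ is an ideal of $\ad(\h\oplus\mf k)$, so $\mf f$ is an ideal of $q(\g(\mf k))$. As $\a$ is an abelian ideal of $\g(\mf k)$, the intersection
\[
\mf d_0 := q(\a)\cap \mf f = \{\ad(k)+n_k : k\in\mf k,\ \ad(k)\in\im(R)\}
\]
is an abelian ideal of $\mf f$, hence $\mf d_0\subseteq\mf d$, with projection $\pi_{\n\oplus\m}(\mf d_0)=\{n_k:\ad(k)\in\im(R)\}\subseteq\n$. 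This projection equals $\n$ precisely when $\ad(\mf k)\subseteq\im(R)$, and by \Cref{lem:ker(R) characterisation} (combined with $\ad(\mc Z(\mf k))\subseteq\ad(\mf k)$ and $\mc Z(\mf k)\subseteq\mc Z(\h\oplus\mf k)$), this is in turn equivalent to $\ker(R)=0$, i.e.\ condition (ii).

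Next I would construct the extra piece. For each $z=z_\h+z_\mf p\in \mc Z(\b_1)\subseteq\mc Z(\h)\oplus\mf p$, let $k_z\in\mf k_1$ be the inner derivation identified with $z$, so $\varphi(k_z)=\ad(z)|_\m$ and $\varphi_2(k_z)=0$. Using the torsion and curvature formulas \eqref{eq: T} and \eqref{eq: R}, together with the centrality of $z$ in $\b_1$ and the identification $\mf k_1\cong\b_1$, I would verify that the element
\[
X_z := \ad(k_z)+n_{k_z}+z_\mf p
\]
lies in $\mf f$ whenever $\ad(k_z)\in\im(R)$ and that $[X_z,Y]\in\mf d_0+\Span\{X_{z'}:z'\in\mc Z(\b_1)\}$ for every $Y\in\mf f$. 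This produces an abelian ideal $\mf d_1:=\mf d_0+\Span\{X_z:z\in\mc Z(\b_1)\}\subseteq\mf d$ whose $\m$-projection is exactly $\pi_\m(\mc Z(\b_1))$.

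The main obstacle is showing that $\mf d=\mf d_1$. For a general element $X=\xi+n+m\in\mf d$, the condition $[X,\im(R)]\subseteq\mf d$ combined with the skew-symmetric action of $\im(R)$ on $\n\oplus\m$ should force the $\m$-component $m$ into $\mf p$; the condition $[X,\mf d_0]=0$ should then, via the Nomizu bracket computations on $\n\oplus\m$, pin down $\xi=\ad(k_z)$ and $n=n_{k_z}$ for some $z\in\mc Z(\b_1)$ with $z_\mf p=m$. Granting this description, the equivalence reads off directly: $\pi_{\n\oplus\m}(\mf d)=\n$ if and only if $\pi_\m(\mc Z(\b_1))=0$ (condition (i)) and $\pi_{\n\oplus\m}(\mf d_0)=\n$, the latter being equivalent to (ii) as explained above. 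The delicate bracket analysis needed to rule out any abelian ideal contribution outside $\mf d_1$ is what I expect to be the main technical difficulty, relying crucially on the fact that $\b_1\cong\mf k_1$ encodes inner derivations of the semisimple algebra $\h\oplus\m_0$.
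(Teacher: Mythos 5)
Your bookkeeping for the easy part is essentially right: $\mf d_0=q(\a)\cap\mf f$ is an abelian ideal contained in $\mf d$, and $\pi_{\n\oplus\m}(\mf d_0)=\n$ iff $\psi(\mf k)\subseteq\im(R)$ iff $\ker(R)=\{0\}$ (though the implication $\psi(\mf k)\subseteq\im(R)\Rightarrow\ker(R)=\{0\}$ does not follow from \Cref{lem:ker(R) characterisation} alone; you need the symmetry of $R$, $R_0$ and $R_\psi$ with respect to $B_{\Lambda^2}$, exactly as in the last part of the paper's proof, to pass from $R_0(\omega')=0$ to $\omega'\perp\ad(\h)$). However, your explicit ``extra piece'' is wrong. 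Modulo $\mf d_0$ your element $X_z=\psi(k_z)+n_{k_z}+z_\p$ is the bare vector $z_\p\in\m$, whereas the elements that actually witness the failure of $(i)$ are $-\psi(k_z)+\ad(z_\h)+z_\p$, which are central in $\mf f$ and have \emph{no} $\n$-component (this is what the paper uses in its final step, and it already needs $\ker(R)=\{0\}$ just to guarantee they lie in $\mf f$). In the regime where the extra piece matters ($\ker(R)=\{0\}$, $z_\p\neq 0$) both cannot lie in $\mf d$: subtracting $\psi(k_z)+n_{k_z}\in\mf d_0$ would put the purely skew-symmetric element $\ad(z_\h)-\psi(k_z)\in\im(R)$ into $\mf d$, but any $\xi\in\mf d\cap\im(R)$ satisfies $\xi(x)\in\mf d$ and hence $\xi^2(x)=[\xi,[\xi,x]]=0$ for all $x\in\n\oplus\m$, forcing $\xi=0$; and $\ad(z_\h)-\psi(k_z)\neq 0$ whenever $z_\p\neq0$ because derivations in $\s(\g)$ are determined by their action on $\m$ and the identification $\s(\h\oplus\m_0)\cong\mc Z(\h)\oplus\mf p$ is injective. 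So the claimed verification that $\mf d_1$ is an (abelian) ideal would fail, and $\mf d\neq\mf d_1$ as you define it.

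More importantly, the entire content of the hard implication---that an element of $\mf d$ with nonzero $\m$-component can only come from $\mc Z(\b_1)$, i.e.\ that there are no abelian-ideal directions beyond the ones you list---is deferred (``granting this description''). The conditions $[X,\im(R)]\subseteq\mf d$ and $[X,\mf d_0]=0$ do not by themselves force the $\m$-component into $\mf p$, nor do they exclude components along the simple ideals $\g_i$ or a nontrivial action of the associated derivation on $\R^n$. This is precisely where the paper's proof does its heavy lifting: it pushes $\mf d$ through the diagram \eqref{eq:diagram p and q} to $\g(\mf k)/\a\cong\mf k\ltimes\g$, where $p(q^{-1}(\mf d))$ is a $2$-step nilpotent ideal; simplicity of the $\g_i$ then kills the components in the semisimple part; and the explicit computation $[z,w]=\sum_{i=1}^l g([k_i,z],[k',z])\ad(n_i+k_i)\neq 0$, contradicting abelianness of $\mf d$, rules out a nontrivial action of $k'$ on $\R^n$. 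Only after all of this can one conclude $k'\in\mc Z(\mf k_1)$ and identify the offending $\m$-component with $\pi_\m(\mc Z(\b_1))$. Until you supply arguments of this kind (and repair the form of the central elements as above), the description $\mf d=\mf d_1$ and the ``read off directly'' conclusion remain unproven, so the proposal has a genuine gap exactly at the crux of the statement.
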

\begin{proof} 
Suppose that $\pi_{\n\oplus \m}(\mf d) \subsetneq \n$. Let $n\in \pi_{\n\oplus\m}(\mf d)^\perp\cap \n$ and $n\neq 0$. Let $k\in\mf k$ be the element corresponding to $n$. From \Cref{lem:ker(R) characterisation} we know that $\psi({\mf k}^{ss})\subset \im(R)$, thus $q(\a^{ss})\subset \mf f$. Note that $\a^{ss}\subset \g(\mf k)$ is an abelian ideal. Thus, the subalgebra  $q(\a^{ss})$ is also an abelian ideal in $\ad(\h\oplus\mf k)\oplus \n\oplus \m$, because $q$ is a surjective Lie algebra homomorphism. Therefore, $q(\a^{ss})\subset \mf d$ and we obtain $\n^{ss}\subset \pi_{\n\oplus\m}(\mf d)$. This implies $k\in \mc Z(\mf k)$. Suppose that $\psi(k)\in \im(R)$. It is easy to see that $k+n\in \mc Z(\g(\mf k))$. The homomorphism $q$ is surjective and thus $q(k+n)=\psi(k)+n\in \mc Z(\mf f)$ and
\[
\mbox{span}\{\psi(k)+n\}\oplus \mf d\subset \mf f
\]
is an abelian ideal. This contradicts the maximality of $\mf d$. We conclude that $\psi(k)\notin \im(R)$ and thus that $\ker(R)\neq \{0\}$. We have shown that $(ii)$ does not hold. Now we can assume that $\n\subset \pi_{\n\oplus \m}(\mf d)$. 
Suppose 
\[
\ad(h'+k')+m \in \mf d,\quad \mbox{with}\quad m\in \m\backslash\{0\}.
\]
We will use the diagram \eqref{eq:diagram p and q} to transfer the abelian ideal $\mf d\subset \mf f$ to $\mf k\ltimes \g$ and conclude that $\pi_\m(\mc Z(\b_1)) \neq \{0\}$.  By \Cref{lem:h^+ are derivations} we know that $\mf d$ is also preserved by all derivations of $\mf f$. As pointed out above, $\mf f\subset \ad(\h\oplus\mf k)\oplus\n\oplus \m$ is an ideal. It follows that $\mf d$ is also an abelian ideal in $\ad(\h\oplus \mf k)\oplus \n\oplus \m$. Note that $\ker(q)\subset \mc Z(\h\oplus \mf k)$ and $\ker(q)$ commutes with $\n\oplus\m$, thus $\ker(q)\subset \mc Z(\g(\mf k))$.
The subspace $q^{-1}(\mf d)$ is a 2-step nilpotent ideal in $\g(\mf k)$ with $\ker(q)$ contained in its center. Therefore, the subalgebra $\tilde{\mf d}:= p(q^{-1}(\mf d))$ is a 2-step nilpotent ideal in $\mf k \ltimes \g$. The reductive decomposition $\ad(\h\oplus\mf k)\oplus \n\oplus \m$ is effective. Thus, we know that $q(\a)+\mf d$ is an abelian ideal in $\ad(\h\oplus\mf k)\oplus \n\oplus \m$, see \Cref{lem:max abelian ideal}. 
Let $\ad(u) + n\in \mf d$ with $u\in \h\oplus \mf k$ and $n\in \n$. Let $k\in \mf k$ such that $k+n\in \a$. We have $\ad(u-k) = \ad(u)+n - (\ad(k)+n) \in \ q(\a)+\mf d$. From \Cref{lem:general form of abelian ideal}. \emph{iii)} we obtain $\ad(u)=\ad(k)$ and thus $q(\a)\subset \mf d$.
In particular, for every $k+n \in \a$ we have
\[
0= [\ad(h'+k')+m,\ad(k)+n] = [\ad(k'),\ad(k)+n)],
\]
where we used \Cref{lem:brackets m+ m-}. 
This implies that $k'\in \mc Z(\mf k)$.
Let 
\[
\tilde{d} := p(h'+k'+m) = k'+h'+m= k' +g_1+\dots + g_k +x \in \tilde{\mf d},
\]
where $x\in \R^n$ and $g_i \in \g_i$ with $\g_i$ a simple ideal of $\g$ for $i=1,\dots,k$. Consider 
\[
[\tilde{d},\g_i] \in \tilde{\mf d}\cap \g_i,
\]
for $i=1,\dots ,k$.
If $[\tilde{d},\g_i]\neq \{0\}$, then this implies that $\g_i \subset \tilde{\mf d}$, because $\g_i$ is simple and $\tilde{\mf d}$ is an ideal. This is not possible because $\tilde{\mf d}$ is 2-step nilpotent and $\g_i$ is simple. We conclude that $[\tilde{d},\g_i]=\{0\}$. Suppose that $y\in \R^n$ and $[k',y]=z \neq 0$. Then $[\tilde{d},y] = z\in \tilde{\mf d}\cap \R^n$. Moreover, $w:=[k',z] \in \tilde{\mf d}\cap \R^n$ and $g(w,y) = g([k',z],y) = -g(z,z)\neq 0$. In particular $w\neq 0$. We already saw that $q(\a) \subset \mf d$. Therefore, $p^{-1}(\tilde{\mf d})=q^{-1}(\mf d)$ and $q(p^{-1}(\tilde{\mf d})) = \mf d$. It follows that $z,w\in \mf d\subset \mf f$. If we take the Lie bracket of $z$ and $w$ in $\mf f$, we obtain
\[
[z,w] = \sum_{i=1}^l g([k_i,z],w) \ad(n_i+k_i) = \sum_{i=1}^l g([k_i,z],[k',z]) \ad(n_i+k_i)\neq 0, 
\]
where $k_1\.k_l$ is an orthonormal basis of $\mf k$ with respect to $B$ and $n_i$ is the corresponding basis of $\n$. This contradicts the fact that $\mf d$ is abelian. We conclude that $[k',y]=0$ for all $y\in \R^n$. In other words $k'\in \mf k_1$. Remember that $k'\in \Der(\g)$ and we showed $k' \in \Der(\g_1\oplus \dots \oplus\g_k)\subset \Der(\g)$ and $k'=-\ad(g_1+\dots +g_k)=-\ad(h'+m)$. 
Hence we see that $0\neq k'\in \mc Z(\mf k_1)$ and $\pi_\m(h'+m) = m \neq 0$. Remember that we defined $\b_1\subset \g_1\oplus \dots \oplus \g_k$ by $\ad(\b_1)=\mf k_1$ in \Cref{def:k_i and b_i}. This proves that $(i)$ does not hold.

For the converse, suppose $\pi_{\n \oplus \m}(\mf d) = \n$. Let $n\in \n$ and $\omega\in \im(R)$ such that $\omega+n\in \mf d$. Let $k\in \mf k$ be the corresponding element of $n$. Note that \Cref{lem:brackets m+ m-}.\emph{iv)} and \Cref{rem:submodule in m^+} imply $\omega = \psi(k)$. Thus for all $k\in \mf k$ we get $\psi(k)\in \im(R)$ and if $\omega'\in \ker(R)$, then $B_{\Lambda^2}(\omega',\psi(k))=0$. It follows that
\[
0=R(\omega') = R_0(\omega') + R_\psi(\omega') = R_0(\omega') + \sum_{i=1}^l B_{\Lambda^2}(\omega',\psi(k_i)) \psi(k_i) = R_0(\omega'),
\]
where $k_1,\dots,k_l$ is an orthonormal basis of $\mf k$. This implies $\omega'\perp \ad(\h)$, because $\im(R_0)=\ad(\h)$ and $R_0$ is symmetric with respect to $B_{\Lambda^2}$. We have $\omega'\perp \ad(\h\oplus\mf k)$ and thus $\omega'=0$. We conclude that $\ker(R) =\{0\}$.

Finally, we still need to show that if $\ker(R)= \{0\}$ and $\pi_\m(\mc Z(\b_1))\neq \{0\}$, then $\pi_{\n\oplus \m}(\mf d)\neq \n$. Let $b=h+m\in \mc Z(\b_1)\subset \h\oplus\m$ with $m\neq 0$. Let $n\in \n$ and $k\in \mf k$ be the elements corresponding to $b$. Since $\ker(R)=\{0\}$ we know that $\psi(k)\in \im(R)$ and $\ad(h)\in \im(R)$. We easily see that $-\psi(k)+\ad(h) + m \in \mc Z(\mf f)$ and thus in particular that $-\psi(k)+\ad(h) + m \in \mf d$. We have $0\neq m\in \pi_{\n\oplus \m}(\mf d)$ and $m\notin \n$ and thus $\pi_{\n\oplus \m}(\mf d)\neq \n$.
\end{proof}

From the above lemma we see that if $\pi_{\n\oplus \m}(\mf d) = \n$, then $\im(R) = \ad(\h\oplus \mf k)$ and $\ad(\mc Z(\mf k_1))\subset \ad(\h)$. More precisely, we obtain
\begin{equation}\label{eq:hol alg}
\im(R) = \ad(\h\oplus\mf k) = \ad(\h)\oplus \psi({\mf k_1}^{ss}\oplus \mf k_2\oplus\mf k_3).
\end{equation}
Note that his is a formula for the holonomy algebra of the naturally reductive connection of a $(\mf k,B)$-extension.

\begin{remark}
	In \cite[Sec.~2.3]{Storm2018} an explicit description of $(\mf k,B)$-extensions of spaces as in \eqref{eq:base space form} is given under the additional assumption that $\b_i\subset \mc{Z}(\h)\oplus \p$ splits as $\b_i =\b_{i,\z}\oplus \b_{i,\p}$ with $\b_{i,\z}\subset \mc{Z}(\h)$ and $\b_{i,\p}\subset \p$ for $i=1,2$. Note that \Cref{lem:canonical base iff}  condition $(i)$ implies this assumption. Consequently, this together with \Cref{thm:canonical base} implies that \cite[Sec.~2.3]{Storm2018} describes really all naturally reductive spaces.
\end{remark}

Next we give a criterion when two $(\mf k,B)$-extensions are isomorphic.

\begin{proposition}\label{prop:iso type II} 
For $i=1,2$ let $\g_i=\h_i\oplus \m_i$ be naturally reductive decompositions with $\g_i$ their transvection algebras and with $\g_i$ of the form
\[
\g_i = \h_i \oplus \m_{0,i} \oplus _{L.a.} \R^{n_i},
\]
where $\h_i\oplus \m_{0,i}$ is semisimple or $\{0\}$. Let $(T_i,R_i)$ be the infinitesimal model of $\g_i=\h_i\oplus \m_i$ for $i=1,2$. Furthermore, let $\f_i = \mf r_i \oplus \n_i \oplus \m_i $ be the transvection algebra of the $(\mf k_i,B_i)$-extension of $(T_i,R_i)$, where $\mf r_i$ is the isotropy algebra. Suppose $\g_i=\h_i\oplus \m_i$ is the canonical base space of the $(\mf k_i,B_i)$-extension for $i=1,2$ and that the $(\mf k_1,B_1)$-extension and $(\mf k_2,B_2)$-extension are isomorphic. Then there is a Lie algebra isomorphism $\tau:\g_1  \to \g_2$.
Furthermore, $\tau(\h_1) = \h_2$, $\tau|_{\m_1}:\m_1\to \m_2$ is an isometry and $\tau_*:\mf k_1\to \mf k_2$ is an isometry, where $\tau_*:\Der(\g_1)\to\Der(\g_2)$ is the induced map on derivations.
\end{proposition}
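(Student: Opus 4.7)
The plan is to apply \Cref{lem:inf models isomorphic} and then track which canonical subspaces are preserved by the resulting Lie algebra isomorphism. The assumed isomorphism of the two $(\mf k_i,B_i)$-extensions at the level of infinitesimal models produces, via \Cref{lem:inf models isomorphic}, a Lie algebra isomorphism $\hat M:\f_1\to \f_2$ mapping $\mf r_1$ onto $\mf r_2$ and restricting to a linear isometry $\n_1\oplus\m_1\to \n_2\oplus \m_2$.

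The next step is to use uniqueness to control where $\hat M$ sends the building blocks of the fiber decomposition. By \Cref{lem:max abelian ideal} each $\f_i$ has a unique maximal abelian ideal $\mf d_i$, so $\hat M(\mf d_1)=\mf d_2$. Every piece of the fiber decomposition of $\f_i$ (\Cref{def:decomposition for abelian ideal}) is defined purely in terms of $\mf r_i$, the reductive complement $\n_i\oplus \m_i$, its metric, and $\mf d_i$, all preserved by $\hat M$. Combined with \Cref{lem:canonical base iff} and the hypothesis that $\g_i=\h_i\oplus\m_i$ is the canonical base space, this identifies the pieces as $\m_i^+=\n_i$, $\h_i^+=\mf k_i$ and $\m_i^-=\m_i$. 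The crucial additional point is that $\h_i$ itself is canonically recoverable inside $\mf r_i$ as $\h_{0,i}=\pi_{\mf r_i}([\m_i,\m_i])$, where the projection onto $\mf r_i$ is taken along $\mf d_i\oplus\m_i$ in $\f_i=\mf r_i\oplus\mf d_i\oplus \m_i$ (\Cref{lem:semidirect product}); this description is also intrinsic and hence preserved, yielding $\hat M(\h_1)=\h_2$.

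The map $\tau$ is then produced by descending $\hat M$ to the base-space quotient. By \Cref{lem:semidirect product} the subalgebra $\mf s_i:=\mf d_i\oplus\h_i\oplus\m_i\subset \f_i$ has $\mf d_i$ as a central ideal, and its quotient is the transvection algebra of $\g_i=\h_i\oplus \m_i$. Because $\hat M$ preserves both $\mf s_i$ and $\mf d_i$, it descends to a Lie algebra isomorphism $\tau:\g_1\to \g_2$. Since $\hat M(\h_1)=\h_2$ and $\hat M(\m_1)=\m_2$, under the identification $\mf s_i/\mf d_i\cong \h_i\oplus \m_i$ one reads off $\tau=\hat M|_{\h_1\oplus\m_1}$, so automatically $\tau(\h_1)=\h_2$ and $\tau|_{\m_1}$ is an isometry.

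For the last claim, the plan is to identify $\tau_*|_{\mf k_1}$ with $\hat M|_{\mf k_1}$. The bracket formulas defining the $(\mf k,B)$-extension say that the derivation action of $k\in \mf k_i$ on $\g_i$ coincides with $[k,\,\cdot\,]$ computed inside $\f_i$ (this bracket kills $\h_i$ and acts as $\varphi_i(k)$ on $\m_i$); since $\hat M$ is a Lie algebra isomorphism and $\tau$ is induced by $\hat M$, this forces $\tau_*(k)=\hat M(k)$ for every $k\in \mf k_1$. Finally, the metric $B_i$ is by construction the pullback of $g_i|_{\n_i\times \n_i}$ under the tautological identification $\mf k_i\cong \n_i$ encoded by the graph structure of $\mf d_i$; as $\hat M$ is an isometry on $\n_i$ and preserves $\mf d_i$, it intertwines these identifications, so $\hat M|_{\mf k_1}:(\mf k_1,B_1)\to (\mf k_2,B_2)$ is an isometry. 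The main delicate point will be justifying that $\h_i\subset \mf r_i$ is genuinely canonical, which comes down to the intrinsic formula for $\h_{0,i}$: the splitting $\mf r_i=\h_i\oplus\mf k_i$ is merely a direct sum of ideals and its summands are not uniquely pinned down without reference to $\mf d_i$.
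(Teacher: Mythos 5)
Your proposal is correct and takes essentially the same approach as the paper: apply \Cref{lem:inf models isomorphic} to get the isomorphism $\hat M=\sigma:\f_1\to\f_2$ of the extension transvection algebras, use uniqueness of the maximal abelian ideal together with the canonical-base-space hypothesis to force $\hat M(\n_1)=\n_2$ and $\hat M(\m_1)=\m_2$, and then read off $\tau_*|_{\mf k_1}$ as $\sigma|_{\n_1}$, an isometry since $B_i=g_i|_{\n_i\times\n_i}$. The only cosmetic difference is how $\tau$ is produced -- the paper checks that $\sigma|_{\m_1}$ intertwines the base infinitesimal models and applies \Cref{lem:inf models isomorphic} a second time, whereas you recover $\ad(\h_i)$ inside $\mf r_i$ via \Cref{lem:semidirect product} and descend $\hat M$ to the quotient by the maximal abelian ideal -- which amounts to the same computation.
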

\begin{proof} 
From \Cref{lem:inf models isomorphic} we obtain a Lie algebra isomorphism
\[
\sigma : \f_1\to \f_2,
\]
such that $\sigma(\mf r_1) = \mf r_2$ and $\sigma$ preserves the unique bilinear form from Kostant's theorem, see \Cref{thm:kostant}. The maximal abelian ideal $\a_1$ of $\f_1$ is bijectively mapped to the maximal abelian ideal $\a_2$ of $\f_2$ by $\sigma$. This implies that $\sigma(\n_1)=\n_2$ and thus we obtain $\sigma(\m_1)=\m_2$, because $\sigma|_{\n_1\oplus \m_1}:\n_1\oplus \m_1\to \n_2\oplus \m_2$ is an isometry. For all $x,y\in \m_1$ we obtain
\[
\sigma(T_1(x,y)) = -\sigma([x,y]_{\m_1}) = -[\sigma(x),\sigma(y)]_{\m_2}=T_2(\sigma(x),\sigma(y))
\]
and
\[
\sigma(R_1(x,y)) = -\sigma(\ad([x,y]_{\mf r_1\oplus \n_1})) = -\ad([\sigma(x),\sigma(y)]_{\mf r_2\oplus \n_2})= R_2(\sigma(x),\sigma(y)),
\]
where $\sigma$ also denotes the linear map $\Lambda^2\m_1\to \Lambda^2\m_2$ induced by $\sigma|_{\m_1}:\m_1\to \m_2$. By \Cref{lem:inf models isomorphic} the isometry $\sigma|_{\m_1}:\m_1\to\m_2$ induces a Lie algebra isomorphism $\tau:\g_1\to\g_2$, which satisfies $\tau(\h_1)=\h_2$ and $\tau|_{\m_1} = \sigma|_{\m_1}$ is an isometry. Recall from \Cref{lem:s(g) as intertwining maps} that $
\s(\g_i) \cong \{x\in \so_{\h_i}(\m_i):h\cdot T_i =0\}$.
Under this identification $\tau_*:\s(\g_1)\to\s(\g_2)$ is given by $\tau_*(x) = \sigma|_{\m_1} \circ x\circ (\sigma|_{\m_1})^{-1}$. Let $k_1\in \mf k_1$ and let $n_1\in \n_1$ be element corresponding to $k_1$. For every $m_2\in \m_2$ we have
\[
(\sigma|_{\m_1} \circ k_1\circ (\sigma|_{\m_1})^{-1})(m_2) = \sigma|_{\m_1}([n_1,(\sigma|_{\m_1})^{-1}(m_2)]) = [\sigma(n_1),m_2].
\]
Remember that by definition $(\mf k_i,B_i)=(\n_i,B_i)$. Therefore, $\tau_*|_{\mf k_1}:\mf k_1\to\mf k_2$ is given by the isometry $\sigma|_{\n_1}:\n_1\to\n_2$. 
\end{proof}

This proposition also implies that the canonical base space is unique for every space. It can be quite non-trivial to see whether two infinitesimal models $(T_1,R_2)$ and $(T_2,R_2)$ on $(\m,g)$ are equivalent. We can view the canonical base space as an invariant of the infinitesimal model. For a base space $\g=\h\oplus \m_0\oplus_{L.a.}\R^n$ it is also quite tractable to decide when two algebras $\mf k_1,\mf k_2\subset \s(\g)$ are conjugate to each other and thus to decide if two naturally reductive spaces are isomorphic.

We are mainly interested in irreducible naturally reductive spaces. This is now  investigated for type II.
Suppose that $\g=\h\oplus \m$ is a naturally reductive decomposition of type II with $\g$ its transvection algebra. Furthermore, suppose that the naturally reductive decomposition is reducible, i.e. 
\[
\g = (\h_1\oplus \m_1) \oplus_{L.a.} (\h_2\oplus \m_2),
\]
see \Cref{lem:reducible iff ideals}. Let $\a\subset \g$ be the maximal abelian ideal. Let $\pi_i:\g \to \g_i:=\h_i\oplus \m_i$ be the projection for $i=1,2$. Now $\pi_i(\a)\subset \g_i$ is an abelian ideal in $\g_i$. Hence $\pi_1(\a)\oplus \pi_2(\a)$ is also an abelian ideal of $\g$. We have $\a\subset \pi_1(\a)\oplus \pi_2(\a)$ and $\a$ is maximal, thus $\a= \pi_1(\a)\oplus \pi_2(\a)$. 
This means that if a reductive decomposition of type II is reducible, then we also obtain a decomposition $\g^- = \g^-_1 \oplus \g^-_2$,
where $\g^-$ is the transvection algebra of the canonical base space as obtained in \Cref{lem:semidirect product}. Moreover, the Lie algebra $\h^+$ splits as an orthogonal direct sum $\h^+=\h^+_1\oplus \h^+_2$, with $\h^+_i\subset \s(\g^-_i)$. Note that it is also possible that $\g_1^- =\{0\}$ or $\g_2^-=\{0\}$. 
Conversely, if we start with a base space
\[
(\g^- = \g^-_1 \oplus \g^-_2,\h_1\oplus \h_2,\overline{g}),
\]
with $\h_i\subset \g_i^-$ and $\g_i^-$ ideals of $\g^-$, a Lie algebra $\mf k=\mf k_1\oplus \mf k_2$ with $\mf k_i\subset \s(\g^-_i)$ and $\mf k_1\perp \mf k_2$ with respect to $B$, then the $(\mf k,B)$-extension is clearly always reducible. 
The above discussion also implies that if a type II space admits a partial dual pair, then it is reducible if and only if its partial dual pair is reducible.

We would also like to have a criterion when a $(\mf k,B)$-extension of a naturally reductive decomposition $\g=\h\oplus \m\oplus_{L.a.} \R^n$ is irreducible. The following proposition will give us such a criterion. For this it is good to remember that the algebra $\s(\h\oplus \m\oplus_{L.a.} \R^n) \cong \mc{Z}(\h) \oplus \mf p \oplus \so(\R^n)$, where $\mf p = \{m\in \m:[h,m]=0,~\forall h\in \h\}$.

\begin{proposition}\label{lem:reducibility criteria}
Let $\g = \h \oplus \m \oplus_{L.a.} \R^n$ be an effective naturally reductive decomposition, with $\g$ its transvection algebra and $\h\oplus \m$ semisimple. Furthermore, let $\mf k\subset \s(\g)$ and let $B$ be some $\ad(\mf k)$-invariant inner product on $\mf k$. Consider the following decomposition
\begin{equation}\label{eq:irreducible factor}
\g=(\h_1\oplus\m_1)\oplus_{L.a.} \dots \oplus_{L.a.} (\h_p \oplus \m_p) \oplus_{L.a.} \m_{p+1} \oplus_{L.a.} \dots \oplus_{L.a.} \m_{p+q},
\end{equation} 
where $\h_i\oplus \m_i$ is an irreducible naturally reductive decomposition with $\h_i\subset \h$ and $\m_i\subset \m$ for $i=1,\dots, p$ and $\m_{p+j}\subset \R^n$ is an irreducible $\mf k$-module for $j=1,\dots, q$. We choose the $\m_1,\dots ,\m_{p+q}$ mutually orthogonal. Suppose that $\g=\h\oplus \m\oplus_{L.a.} \R^n$ is the canonical base space of the $(\mf k,B)$-extension. The $(\mf k,B)$-extension is reducible if and only if there exists a non-trivial partition:
\[
\{\m_1,\dots,\m_p,\m_{p+1},\dots,\m_{p+q}\}=W'\cup W'', \quad W'\cap W'' = \emptyset,
\]
and an orthogonal decomposition of ideals $\mf k=\mf k'\oplus \mf k''$ with respect to $B$ such that $\mf k'$ acts trivially on all elements of $W''$ and $\mf k''$ acts trivially on all elements of $W'$.
\end{proposition}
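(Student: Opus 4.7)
The plan is to prove the two implications separately, leveraging the Lie algebra reducibility criterion of \Cref{lem:reducible iff ideals} and the uniqueness of the canonical base space from \Cref{thm:canonical base}.

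For the sufficiency direction, assume we are given the partition $\{\m_1,\dots,\m_{p+q}\} = W' \cup W''$ and the orthogonal decomposition into ideals $\mf k = \mf k' \oplus \mf k''$. Under $\n \cong \mf k$, write $\n = \n' \oplus \n''$ and set $V' = \n' \oplus \bigoplus_{\m_j \in W'}\m_j$ and $V'' = \n'' \oplus \bigoplus_{\m_j \in W''}\m_j$. I would verify that the torsion $T$ lies in $\Lambda^3 V' \oplus \Lambda^3 V''$ by inspecting each summand of $T = T_0 + \sum_i \varphi(k_i) \wedge n_i + 2T_\n$ from \eqref{eq: T}: the base torsion $T_0$ lives in $\bigoplus_{j\le p}\Lambda^3\m_j$ because the $\g_j$ are Lie algebra direct summands of $\h \oplus \m$ while $\R^n$ is abelian, so $T_0$ is compatible with any partition of the $\m_j$; the term $\sum_i \varphi(k_i) \wedge n_i$ splits because the triviality hypothesis forces $\varphi(k_i) \in \Lambda^2(\bigoplus_{\m_j \in W'}\m_j)$ for $k_i \in \mf k'$, with corresponding $n_i \in \n'$, and symmetrically for $\mf k''$; and $T_\n(x,y,z) = B([x,y],z)$ splits because $\mf k' \perp \mf k''$ in $B$ and $[\mf k',\mf k''] = 0$. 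Reducibility then follows from \Cref{def:irred decomp}.

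For the necessity direction, suppose the transvection algebra $\mf f$ of the $(\mf k,B)$-extension is reducible. \Cref{lem:reducible iff ideals} yields orthogonal Lie ideals $\mf f = \mf f_1 \oplus \mf f_2$ with isotropy split $\mf r = \mf r_1 \oplus \mf r_2$, $\mf r_i \subset \mf f_i$. Since $\n \oplus \m$ is the Kostant-orthogonal complement of $\mf r$, intersecting with $\mf f_i$ yields $\n = \n_1 \oplus \n_2$ and $\m = \m_1^f \oplus \m_2^f$ with $\n_i = \mf f_i \cap \n$, $\m_i^f = \mf f_i \cap \m$, and via $\n \cong \mf k$ a $B$-orthogonal decomposition $\mf k = \mf k_1 \oplus \mf k_2$ into ideals: the $\n$-component of $[n_k, n_{k'}]$ in $\mf f$ is proportional to $[k,k']_{\mf k}$, so $[\mf f_1, \mf f_2] = 0$ forces $[\mf k_1, \mf k_2] = 0$. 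Moreover, the $\m$-component of $[n_k, m]$ in $\mf f$ equals $-\varphi(k)(m)$; applied to $n_k \in \n_1 \subset \mf f_1$ and $m \in \m_2^f \subset \mf f_2$ this yields $\varphi(\mf k_1)(\m_2^f) = 0$, and symmetrically $\varphi(\mf k_2)(\m_1^f) = 0$.

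To extract the partition, each $\mf f_i$ is itself a transvection algebra of its sub-$(\mf k_i,B_i)$-extension (as the Kostant form restricts and the isotropy splits), so it has a canonical base space $\g^{(i)}$ by \Cref{thm:canonical base}. Then $\mf f = \mf f_1 \oplus \mf f_2$ is the $(\mf k,B)$-extension of $\g^{(1)} \oplus \g^{(2)}$, and uniqueness of the canonical base forces $\g = \g^{(1)} \oplus \g^{(2)}$, splitting $\h$, $\m$ and $\R^n$ compatibly. For $j \le p$, the irreducible naturally reductive factor $\g_j = \h_j \oplus \m_j$ cannot be decomposed as a direct sum of NR-compatible Lie ideals (by irreducibility combined with \Cref{lem:reducible iff ideals}), so $\g_j$ lies entirely in one of $\g^{(1)}, \g^{(2)}$. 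For $j > p$, the decomposition $\R^n = \R^{n_1} \oplus \R^{n_2}$ is $\mf k$-invariant since $\mf k_i$ annihilates the opposite factor, so the irreducible $\mf k$-module $\m_{p+j}$ lies in exactly one factor. Setting $\mf k' = \mf k_1$, $\mf k'' = \mf k_2$ yields the partition. The main technical hurdle will be rigorously justifying the descent of the ideal decomposition to the canonical base space, which requires showing that the maximal abelian ideal of $\mf f$ splits across $\mf f_1 \oplus \mf f_2$.
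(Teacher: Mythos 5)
Your sufficiency argument is fine and agrees with the paper (which simply cites the torsion formula \eqref{eq: T} together with the splitting criterion). The necessity direction, however, has a genuine gap, and it is not the one you flag at the end. The splitting of the maximal abelian ideal across $\mf f_1\oplus \mf f_2$ is the \emph{easy} step: the projections $\pi_i(\a)$ are abelian ideals, their sum contains $\a$, and maximality forces $\a=\pi_1(\a)\oplus\pi_2(\a)$; combined with the canonical-base hypothesis this does give $\n=\n'\oplus\n''$ with $\n'\subset\mf f_1$, $\n''\subset\mf f_2$ and hence the $B$-orthogonal ideal decomposition $\mf k=\mf k'\oplus\mf k''$ (note that your earlier assertion that intersecting the Kostant complement with $\mf f_i$ already yields $\n=\n_1\oplus\n_2$ and $\m=\m_1^f\oplus\m_2^f$ is unjustified at that stage -- it only follows \emph{after} this abelian-ideal argument). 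The real work, which your proposal skips, is showing that each factor $\m_i$ of the given decomposition lies entirely in one of the two ideals. Your one-line justification -- an irreducible factor ``cannot be decomposed as a direct sum of NR-compatible Lie ideals by irreducibility combined with \Cref{lem:reducible iff ideals}'' -- does not close this: the splitting induced on $\g_j$ by the two ideals need not split its isotropy $\h_j$ (think of group-type factors with $\g_j\cong\mf s\oplus\mf s$ and $\h_j$ the diagonal), so \Cref{lem:reducible iff ideals} gives no contradiction; worse, for an irreducible \emph{symmetric} factor the restricted torsion vanishes, so a torsion-splitting can genuinely cut $\m_i$ into pieces, and for a factor on which $\mf k$ acts trivially the desired containment can simply fail. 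This is precisely why the paper (i) disposes of the case $\m_i\subset\mf v$ at the outset by writing down the partition directly with $\mf k'=\{0\}$, (ii) uses a bracket-generation argument ($V_0=\{v\}$, $V_j=\operatorname{span}\{V_{j-1},[V_{j-1},\m_i]_{\m_i}\}$) for non-symmetric irreducible factors, and (iii) treats the remaining irreducible hermitian symmetric factors separately, using that $\mc Z(\h_i)$ acts as a multiple of the complex structure so that $[z,\m_i]=\m_i$. None of these cases is covered by your argument.

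There is a second, related soft spot: your route through ``each $\mf f_i$ is the transvection algebra of a sub-extension with canonical base $\g^{(i)}$, and uniqueness of the canonical base forces $\g=\g^{(1)}\oplus\g^{(2)}$, splitting $\h$, $\m$ and $\R^n$ compatibly'' conflates an abstract isomorphism (what \Cref{prop:iso type II} provides) with an identification compatible with the \emph{given} subspaces $\m_1,\dots,\m_{p+q}$, and it invokes \Cref{thm:canonical base} for the $\mf f_i$, which is stated only for irreducible decompositions. The paper avoids this detour entirely by working directly inside $\mf f=\mf f_1\oplus\mf f_2$; if you want to keep your descent strategy, you would need to prove the compatibility statements directly, which essentially reduces you to the case analysis above.
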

\begin{proof}
If such a partition exists, then it is clear from the formula of the $(\mf k,B)$-extension and \Cref{thm:product iff torsion prod} that the $(\mf k,B)$-extension is reducible. 

For the converse we suppose the $(\mf k,B)$-extension is reducible. Let $\mf v:= \{v\in \m\oplus \R^n: \varphi(k)v=0,~\forall k\in \mf k\}$. Suppose that $\m_i\subset \mf v$ for some $i=1\. p+q$. Then we can define a partition by $W':=\{\m_i\}$, $W'':=\{\m_1\.\hat{\m}_i\.\m_{p+q}\}$ and define $\mf k':=\{0\}$ and $\mf k'':=\mf k$. From now on we assume that no $\m_i$ contained in $\mf v$.
Let $\f$ be the transvection algebra of the $(\mf k,B)$-extension $(T,R)$. If the $(\mf k,B)$-extension is reducible, then by \Cref{lem:reducible iff ideals} there exist two orthogonal ideals $\f_1\subset \f$ and $\f_2\subset \f$ with respect to the unique bilinear form from Kostant's theorem, such that $\f=\f_1\oplus \f_2$ and $\im(R)=\mf r_1\oplus \mf r_2$ with $\mf r_i\subset \f_i$. Let $\a\subset \f$ be the maximal abelian ideal. Let $\pi_i:\f \to \f_i$ be the projections for $i=1,2$. Now $\pi_i(\a)\subset \f_i$ is an abelian ideal in $\f_i$. Hence also $\pi_1(\a)\oplus \pi_2(\a)$ is an abelian ideal of $\f$. Since $\a\subset \pi_1(\a)\oplus \pi_2(\a)$ and $\a$ is maximal we obtain $\a= \pi_1(\a)\oplus \pi_2(\a)$. Hence $\n = \n' \oplus \n''$ with $\n'\subset \f_1$ and $\n''\subset \f_2$. In particular this implies that $\n'\perp\n''$. Let $\mf k=\mf k'\oplus \mf k''$ be the corresponding orthogonal decomposition of $\mf k$. We will now show for all $\m_i$ that either $\m_i\subset \f_1$ or $\m_i\subset \f_2$.
Since there is no $\m_i$ contained in $\mf v$ we have
\[
\R^n= [\mf k,\R^n] = [\mf k',\R^n]+ [\mf k'',\R^n].
\]
Note that $[\mf k',\R^n]\subset \f_1$ and $[\mf k'',\R^n]\subset \f_2$, hence $\R^n=[\mf k',\R^n]\oplus [\mf k'',\R^n]$. This implies that $\m_{p+j}$ is contained in either $\f_1$ or $\f_2$ for all $j=1,\dots,q$.
We consider the case that $\h_i\oplus \m_i$ is not a reductive decomposition of an irreducible symmetric space. Note that $[\mf k,\m_i]\neq \{0\}$, because we assumed that $\m_i$ is not contained in $\mf v$. Suppose that $v\in [\mf k',\m_i]$ for some $v\neq 0$ and $1\leq i\leq p$. Then $v\in \f_1\cap \m_i$. Define $V_0:=\{v\}$ and $V_j:=\mbox{span}\{V_{j-1},~[V_{j-1},\m_i]_{\m_i}\}$ for $j\geq 1$. By assumption $\h_i\oplus \m_i$ is an irreducible decomposition. It is easy to see that this implies there exists a $p\in \N$ for which $V_p = \m_i$. Since $\f_1$ is an ideal we conclude that $\m_i \subset \f_1$. Similarly with $\mf k'$ replaced by $\mf k''$ and $\f_1$ replaced by $\f_2$. If $\h_i\oplus \m_i$ defines an irreducible symmetric space, then $\s(\h_i\oplus \m_i) = \mc Z(\h_i)$. If $\mc Z(\h_i)=\{0\}$, then $\m_i\subset \mf v$ and this we assumed not to be the case. The irreducible symmetric spaces for which $\mc Z(\h_i)\neq 0$ are exactly the irreducible hermitian symmetric spaces and $\mc Z(\h_i)$ is then 1-dimensional. If $z\in \mc Z(\h_i)\backslash \{0\}$, then $\ad(z)$ is a multiple of the almost complex structure on $\m_i$, see \cite[Ch.~VIII]{Helgason2001}. Thus, $[z,\m_i]=\m_i$ holds. By assumption $\varphi(\mf k)$ does not act trivially on $\m_i$, so there either is some $k'\in \mf k'$ which acts on $\m_i$ by the derivation $\ad(z)$ or otherwise there is some $k''\in \mf k''$ which acts on $\m_i$ by the derivation $\ad(z)$. In this first case we have $\m_i = [z,\m_i] = [k',\m_i]\subset \f_1$. In the second case we have $\m_i = [k'',\m_i]\subset \f_2$. 
This shows that either $\m_i$ is contained in $\f_1$ or that $\m_i$ is contained in $\f_2$. We can define a partition by $\m_i\in W'$ if $\m_i\subset \f_1$ and $\m_i \in W''$ if $\m_i\subset \f_2$. Then $\mf k'$ acts trivially on all elements of $W''$ and $\mf k''$ acts trivially on all elements of $W'$.
\end{proof}

\Cref{thm:canonical base} together with the results in \cite{Storm2018} give us an explicit construction for any naturally reductive space. Furthermore, we showed in this section that this general formula of a naturally reductive space allows us to decide when two naturally reductive spaces are isomorphic or whether one naturally reductive space is irreducible. In a forthcoming paper we will illustrate the use of these results by classifying all naturally reductive spaces up to dimension 8.

\medskip

\proof[Acknowledgement] This paper is part of my PhD thesis supervised by Professor Ilka Agricola, whom I would like to thank for her ongoing support and guidance.

\bibliographystyle{plain_url}
\bibliography{./NaturalReductiveDim7}

\end{document}